\documentclass{amsart}

\usepackage{xcolor}
\usepackage{amsmath,amssymb}
\usepackage{amscd}
\usepackage{mathrsfs}
\usepackage{amsthm}
\theoremstyle{plain}
\usepackage{mathtools, todonotes}

\newtheorem{theorem}{\bf Theorem}[section]
\newtheorem{lemma}[theorem]{\bf Lemma}
\newtheorem{proposition}[theorem]{\bf Proposition}
\newtheorem{corollary}[theorem]{\bf Corollary}

\theoremstyle{definition}
\newtheorem{definition}[theorem]{\bf Definition}
\newtheorem{example}[theorem]{\bf Example}

\newtheorem{remark}[theorem]{\bf Remark}

\newcommand{\R}{\mathbb{R}}
\newcommand{\C}{\mathbb{C}}
\newcommand{\N}{\mathbb{N}}
\newcommand{\Z}{\mathbb{Z}}

\newcommand{\nai}[2]{\langle #1,#2\rangle}

\newcommand{\vNtensor}{\overline{\otimes}}

\newcommand{\B}{\mathbb B}
\newcommand{\Q}{\mathbb Q}

\newcommand{\Ga}{\Gamma}

\newcommand{\acts}{\curvearrowright}

\usepackage[all]{xy}

\pagestyle{plain}

\setlength{\topmargin}{-0.7in}

\setlength{\oddsidemargin}{0.35in}
\setlength{\evensidemargin}{0.35in}

\setlength{\textheight}{8.9in}
\setlength{\textwidth}{5.7in}
\setcounter{tocdepth}{2}

\title{Analysis of the Plancherel weight and factoriality of the group von Neumann algebras of non-unimodular almost unimodular groups}
\date{\today}

\author[Y.~Miyamoto]{Yuki Miyamoto}\thanks{Yuki~Miyamoto, Department of Mathematics and Informatics, Chiba University, 1-33 Yayoi-cho, Inage, Chiba, 263- 8522,
Japan;}\thanks{Email : 25wd0101@student.gs.chiba-u.jp}
\address{}
\email{}

\let\origmaketitle\maketitle
\def\maketitle{
  \begingroup
  \def\uppercasenonmath##1{} 
  \let\MakeUppercase\relax 
  \origmaketitle
  \endgroup
}
\begin{document}
\maketitle

\begin{abstract}
Let $G$ be a locally compact group, $L(G)$ be its group von Neumann algebra equipped with the Plancherel weight $\varphi_G$. In this paper, we consider the following two questions. (1) When is the restriction of $\varphi_G$ to the subalgebra generated by a closed subgroup $H$ semifinite? If so, is it equal (up to a constant) to $\varphi_H$? (2) When is $L(G)$ a factor?
We give a complete answer to (1), and when $G$ is second countable, $G_1:= \text{ker}\Delta_G$ is open in $G$ (called almost unimodular) and admits a sufficiently large non-unimodular part, we provide an answer to (2). When $L(G)$ is a factor, we also provide the formula of the S-invariant of $L(G)$.
\end{abstract}

\noindent

\medskip

\noindent
{\bf Mathematics Subject Classification (2020) 43A65 43A70 46L10}.\\
Keywords: group von Neumann algebras, the Plancherel weight, locally compact groups
\medskip

\tableofcontents 
\section{Introduction}
Given a locally compact group $G$ with a fixed left Haar measure $\mu_G$, there is the left regular representation $\lambda^G_\ell$ and the right regular representation $\lambda^G_r$ of $G$ on $L^2(G,\mu_G)$ defined by $$(\lambda_\ell^G(g)\xi)(h)=\xi(g^{-1}h),\,\, (\lambda_r^G(g)\xi)(h)=\Delta_G(g)^{1/2}\xi(hg)\,\,(g,h\in G)$$
for all $\xi \in L^2(G,\mu_G)$. Here, $\Delta_G\colon G\to (0,\infty)$ is the modular function of $G$. The group von Neumann algebra $L(G)$ is 
generated by the operators $\lambda_\ell^G(g)$, that is, $L(G):=\{\lambda_\ell^G(g) : g\in G\}'' \subseteq \B(L^2(G, \mu_G))$. 
Any group von Neumann algebra $L(G)$ admits a canonical normal faithful semifinite weight $\varphi_G$, called the Plancherel weight (Definition \ref{defofplweight}).

Group von Neumann algebras form a rich class of W$^*$-algebras and it has a long history of active research. The structure of $L(G)$ is particularly well behaved when $G$ is discrete: 
\begin{itemize}
    \item[(i)] The Plancherel weight $\varphi_G$ is a faithful normal tracial state, if and only if $G$ is discrete. 
    \item[(ii)] Any $x\in L(G)$ admits the so-called Fourier series expansion $x=\sum_{g\in G}x(g)\lambda(g)$ in a suitable Hilbert space topology.
\end{itemize}
These properties are extremely useful for the structural study of group von Neumann algebras. In fact, the group von Neumann algebra of the free group $\mathbb{F}_2$ was studied already by Murray--von Neumann to provide the very first example of a non-hyperfinite II$_1$ factor. Also, (ii) allows one to show that $L(G)$ is a factor if and only if $G$ is ICC (in which case $L(G)$ is a type II$_1$ factor). 
In contrast, there is no known general condition which characterizes the factoriality of $L(G)$. Besides, $\varphi_G$ is never bounded if $G$ is not discrete. The situation becomes more involved for non-unimodular groups. When $G$ is non-unimodular, $\varphi_G$ is not a tracial weight. In that case, it may happen that $L(G)$ may fail to be semifinite. It is thus important to understand the modular theoretic properties of the Plancherel weight. Because of these difficulties, much less is known about the structure of general group von Neumann algebras of non-discrete groups. 
One natural question to ask about $\varphi_G$ is when it is strictly semifinite \cite{CombesMR288589}. That is, when the restriction of $\varphi_G$ to its centralizer is semifinite. This is equivalent to the existence of a $\varphi_G$-preserving normal faithful conditional expectation of $L(G)$ onto the centralizer $L(G)^{\varphi_G}$. This question has been answered in a recent work by Garcia Guinto and Nelson \cite[Theorem 2.1]{guinto2025unimodulargroups}: $\varphi_G$ is strictly semifinite if and only if $G_1=\ker(\Delta_G)$ is open (such a group is called almost unimodular by the authors), in which case $\varphi_G$ is an almost periodic weight. The authors then initiated the study of group von Neumann algebras of almost unimodular groups. 

\par

In this context, motivated by the work of Garcia Guinto and Nelson, we would like to consider the following questions: 
\begin{itemize}
\item[(1)] Suppose $H$ is a closed subgroup of $G$. When is the restriction of $\varphi_G$ onto the subalgebra (isomorphic to) $L(H)$ semifinite? If so, is it equal to (a constant multiple of) $\varphi_H$? 
\item[(2)] When is $L(G)$ a factor?
\end{itemize}
It is clear that (1) is a generalization of the strict semifiniteness question. Note that $L(H)$ is always globally invariant by the modular flow $\sigma^{\varphi_G}$, while $\varphi_G$ may fail to be semifinite on $L(H)$. We also remark that the centralizer of a faithful normal semifinite weight can be of type III (see \cite{HaageruptypeIIIcentralizerMR430801}).

In this paper, we give a complete answer to (1). 
We also give an answer to (2) when $G$ is second countable almost unimodular and has sufficiently large non-unimodular part. Here, almost unimodular means $G_1 := \text{ker}\Delta_G$ is open in $G$. This notion is also defined in \cite[Definition 2.2]{guinto2025unimodulargroups}. 

For (1), we show in Theorem \ref{openiffsf} that the following three conditions are equivalent:
\begin{itemize}
\item[(i)] $H$ is open.
\item[(ii)] The restriction of $\varphi_G$ onto (a copy of) $L(H)$ is semifinite.
\item[(iii)] There exists $C>0$ such that the restriction of $\varphi_G$ onto (a copy of) $L(H)$ equals $C\cdot\varphi_H$.
\end{itemize}
Moreover, in the above cases, the explicit form of $C$ can be given upon fixing a $G$-invariant Radon measure $\mu$ on $G/H$. 
We note that (ii)$\implies$(i) has been shown in a more general setting of locally compact quantum groups in \cite[Theorem 7.5]{KKS2016}, while (ii)$\iff$(iii) can be shown using the uniqueness of the Haar weight for Kac algebras \cite[Theorem 2.7.7]{EnockSchwartz1992} (we thank Professor Reiji Tomatsu for informing us of these references). Thus, the result has been essentially known. Here, we give a more direct and elementary proof in the locally compact group case which does not involve quantum group theory. 

By the work of Takesaki--Tasuuma \cite{TakesakiTatsuuma1971} (using Blattner's generalization \cite{Blattner1962} of Mackey's theorem on induced representation), the restriction of $\lambda_{\ell}^G$ to $H$ is quasi-equivalent to $\lambda_{\ell}^H$. Our proof rests on an explicit form of the $*$-isomorphism $\Phi\colon L(H)''\to \{\lambda_{\ell}^G(h):h\in H\}''\subseteq L(G)$ witnessing the quasi-equivalence. We remark that our method is different from Garcia Guinto--Nelson's, thus our result gives an alternative proof of (i)$\iff$(ii) part of \cite[Theorem 2.1]{guinto2025unimodulargroups} in the case of $H=\ker(\Delta_G)$. 

For (2), our study is motivated by Garcia Guinto--Nelson's description \cite[Example 3.6]{guinto2025unimodulargroups} of $G$ as a twisted crossed product $G=G_1\rtimes_{\alpha,c}\Delta_G(G)$ of $G_1=\ker{\Delta_G}$ by a cocycle action $(\alpha,c)$ of the abelian group $\Delta_G(G)$ (regarded as a discrete group).

Although our proof of Theorem \ref{tdlcfactority}  follows lines of  \cite[Example 3.8, Theorem 5.1]{guinto2025unimodulargroups} and \cite[II, Proposition 3.1.7]{Sutherland1980}, details are omitted in each of the works. Therefore, we decided to give a detailed argument for completeness. We also remark that our actual proof differs in minor details (see Remark \ref{remark1} and Remark \ref{remark2}). 
Moreover, we give the characterization of factoriality motivated by the Connes spectrum argument of the crossed products by usual actions (see \cite[Chapter III, Corollary 3.4]{ConnesTakesaki1977}). For a cocycle action $(\alpha,u)\colon G\curvearrowright M$ (see Definition \ref{defofcocycle}), we define the (left) twisted crossed product $M \rtimes_{\alpha,u}^\ell G$ (see Definition \ref{defoftwistvN}) and $\Gamma(\alpha,u)$ as the closed subgroup of $\hat{G}$ which is the dual group of the abelian locally compact group $G$ (see Definition \ref{defoftwistspctrm}). We show in Theorem \ref{twistConnesfactority} that $M \rtimes_{\alpha,u}^\ell G$ is a factor if and only if the cocycle action is centrally ergodic and $\Gamma(\alpha,u)=\hat{G}$ holds. 
This is a twisted version of Connes--Takesaki's characterization of the factoriality of crossed products \cite[III Corollary 3.4]{ConnesTakesaki1977}. As we learned from Professor Reiji Tomatsu (after the first draft has been posted online), that this can be shown using Sutherland's stabilization trick (i.e., to take the tensor product with a type I factor to remove the 2-cocycle) and then the original Connes--Takesaki's characterization may be used. Since the full proof has not been found in the literature, we include the full proof for completeness.
For an almost unimodular locally compact group $G$, the openness of $G_1=\text{ker}\Delta_G$ implies the twisted crossed product decomposition $L(G)\cong L(G_1)\rtimes_{\alpha,u}^\ell \Delta_G(G)$ (\cite[Proposition 3.1.7]{Sutherland1980}).
This decomposition plays a crucial role. Since $\Delta_G(G)$ is a multiplicative subgroup of $\R_+=\R_{>0}$, it is abelian and hence we can apply Theorem \ref{twistConnesfactority}. As was pointed out by Garcia Guinto and Nelson \cite[Theorem 5.1]{guinto2025unimodulargroups}, the dual action $\hat{\alpha}$ of $(\alpha,c)$ (in the sense of \cite[II p.150]{Sutherland1980}, see Definition \ref{defdualaction}) is conjugate to Connes' point-modular extension of $\varphi_G$. From this, it can be shown that $\Gamma(\alpha,u)=\widehat{\Delta_G(G)}$ (Proposition \ref{fullspectrum}), and thus we conclude that $L(G)$ is a factor if and only if the cocycle action is centrally ergodic (Theorem \ref{tdlcfactority}).

The cocycle action $(\alpha,u) \colon \Delta_G(G) \curvearrowright L(G_1)$ exhibits behavior similar to that of the dual action of the modular flow of the Plancherel weight $\varphi_G$. In $\S 5$, we examine how such behavior arises in analogy with \cite{takesaki1973duality}. First, we calculate the S-invariant of $L(G)$ when $L(G)$ is a factor through the twisted crossed product decomposition $L(G)\cong L(G_1)\rtimes_{\alpha,u}^\ell \Delta_G(G)$. That is, we show the following formula in Proposition \ref{Sinvformula}:
\begin{align*}
    \text{S}(L(G))\backslash\{0\}&=\bigcap_{0\ne e\in Z(L(G_1))^P}\overline{\{k\in\Delta_G(G):\alpha_k(e)e\ne0\}}\backslash\{0\}
         \\&\supseteq \overline{\{k\in\Delta_G(G):\alpha_k|_{Z(L(G_1))}=\text{id}\}}\backslash\{0\}.
\end{align*}
Note that the twisted crossed product decomposition depends on a section $\sigma \colon \Delta_G(G) \to G$. We remark that the above formula is valid when we decompose $L(G)\cong L(G_1)\rtimes_{\alpha,u}^\ell \Delta_G(G)$ with a section $\sigma$ which satisfies $\sigma(k^{-1}) = \sigma(k)^{-1}$ for all $k \in \Delta_G(G)$, called inverse preserving. There is no concern because any almost unimodular group admits an inverse preserving section (Remark \ref{existenceofinvsec}). If $\Delta_G(G)$ is relatively discrete in $\R_+$ (or equivalently $\Delta_G(G)$ is singly generated), then the above formula turns into the following one: 
$$\text{S}(L(G))\backslash\{0\}=\{k\in\Delta_G(G):\alpha_k|_{Z(L(G_1))}=\text{id}\} \backslash\{0\}.$$
Moreover, We also characterize the semifiniteness of $L(G)$ through the twisted crossed product decomposition (Proposition \ref{sfnessbycocycleact}). As a consequence of these, we provide a sufficient condition for $L(G)$ to be a type $\rm{III}_0$-factor through conditions for the cocycle action $(\alpha,u)\colon \Delta_G(G) \curvearrowright L(G_1)$ (Proposition \ref{suffcondiforIII0}). Here, given a locally compact group $H$ such that $\Delta_H(H)$ is relatively discrete and $L(H_1)$ is a factor, \cite[Lemma 5.1]{Sutherland1980} can be applied to $H_1$ and hence we obtain the new group $G$ such that $L(G)$ is a $\rm{III_0}$-factor. We can confirm that this example is also consequence of the above proposition. Finally, we follow the same construction as in the case that $\Delta_H(H)$ is relatively discrete for the case that the one is dense in $\R_+$. We calculate the S-invariant and conclude that we obtain a type $\rm{III}_1$-factor (Corollary \ref{CorofSutex}).

\section{Analysis of the Plancherel weight}
Let $G$ be a locally compact group. In this section, we denote $\lambda=\lambda_\ell^G$ and $\rho=\lambda_r^G$ when there is no risk of confusion. Moreover, given $G$ and a closed subgroup $H$, we consider arbitrarily fixed left Haar measures $\mu_G$ on $G$ and $\mu_H$ on $H$, and denote $L^2(G)=L^2(G,\mu_G),L^2(H)=L^2(H,\mu_H)$. First, we recall the following classical result due to \cite{TakesakiTatsuuma1971} (see the proof of \cite[Theorem 6]{TakesakiTatsuuma1971}). Since we need the explicit form of the $*$-isomorphism $\Phi\colon L(H)\to \lambda_\ell^G(H)''\subseteq L(G)$, we provide an outline of the proof. We also refer to \cite[Proposition 3.4.4]{KaniuthLaubookMR3821506} for details.

\begin{proposition} \label{subisom}
    For a locally compact group $G$ and the closed subgroup $H$ of $G$, we have 
    $$L(H) \cong \{\lambda_\ell^G(h) : h \in H\}'' \subseteq \B (L^2 (G)).$$
\end{proposition}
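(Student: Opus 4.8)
The plan is to realize the restricted representation $\lambda_\ell^G|_H$ as an amplification of $\lambda_\ell^H$ and to read off the isomorphism $\Phi$ explicitly from a unitary intertwiner. The guiding observation is that, although left translation by a general $g\in G$ permutes the left cosets $gH$, left translation by an element $h\in H$ fixes every \emph{right} coset $Hx$ (since $hHx=Hx$), and on such a coset it acts exactly as left translation of $H$ on itself. This suggests coordinatizing $G$ by $H\times(H\backslash G)$ and splitting off the transverse direction $H\backslash G$ as an inert multiplicity space.

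First I would fix a Borel cross-section $s\colon H\backslash G\to G$ of the quotient map, so that $(h,\omega)\mapsto h\,s(\omega)$ is a Borel isomorphism of $H\times(H\backslash G)$ onto $G$. Next I would disintegrate $\mu_G$ along the quotient map $G\to H\backslash G$. Since the $H$-action by left translation preserves $\mu_G$ and fixes each fibre $Hx$, the fibre measures are left $H$-invariant, hence proportional to $\mu_H$ in the $h$-coordinate; absorbing the $\omega$-dependent proportionality factor into the base measure yields a measure $\nu$ on $H\backslash G$ with $d\mu_G(h\,s(\omega))=d\mu_H(h)\,d\nu(\omega)$. This makes the map $W\colon L^2(G)\to L^2(H)\btimes L^2(H\backslash G,\nu)$, $(W\xi)(h,\omega)=\xi(h\,s(\omega))$, a unitary.

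The crux is then a direct computation: for $h_1\in H$ and $x=h\,s(\omega)$ one has $(\lambda_\ell^G(h_1)\xi)(h\,s(\omega))=\xi(h_1^{-1}h\,s(\omega))$, so that $W\,\lambda_\ell^G(h_1)\,W^*=\lambda_\ell^H(h_1)\otimes 1$. Consequently the von Neumann algebra generated by $\{\lambda_\ell^G(h):h\in H\}$ equals $W^*\,(L(H)\btimes\C1)\,W$, and since amplification by $\C1$ does not change the isomorphism class, the formula $\Phi(x):=W^*(x\otimes1)W$ furnishes the desired explicit $*$-isomorphism $L(H)\xrightarrow{\sim}\{\lambda_\ell^G(h):h\in H\}''$ sending $\lambda_\ell^H(h)$ to $\lambda_\ell^G(h)$; in particular this exhibits $\lambda_\ell^G|_H$ as quasi-equivalent to $\lambda_\ell^H$, matching the statement quoted from \cite{TakesakiTatsuuma1971}.

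The main obstacle is the measure-theoretic bookkeeping in the middle step: guaranteeing the Borel cross-section (which needs some regularity of $G$, e.g.\ second countability) and carrying out the disintegration so that $W$ is genuinely unitary. It is worth noting, however, that even if an $\omega$-dependent density survived the normalization, it would act as $1\otimes M_c$ and thus commute with every $\lambda_\ell^H(h_1)\otimes 1$, so the key conjugation identity is robust; the remaining work is only in normalizing the transverse measure to obtain a clean unitary. The modular function $\Delta_G$ does not intervene here, since in the present convention the left regular representation carries no $\Delta_G^{1/2}$ factor.
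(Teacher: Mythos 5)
Your proof is correct, and it takes a genuinely different route from the paper's. The paper follows Takesaki--Tatsuuma: it embeds $L^2(G)$ unitarily into the induced representation space $\mathcal{H}(\mathrm{Ind}_H^G\tilde{\rho})$ of the right regular representation of $H$, defines $\Phi(T)=U^*\widehat{T}U$ with $\widehat{T}$ acting fibrewise, and quotes Blattner's theorem for the fact that this is a $*$-isomorphism onto $\{\lambda_\ell^G(h):h\in H\}''$; you instead pick a Borel cross-section $s$ of $H\backslash G$, disintegrate $\mu_G$ over the right cosets, and verify the conjugation identity $W\lambda_\ell^G(h)W^*=\lambda_\ell^H(h)\otimes 1$ by hand, so that $\Phi(x)=W^*(x\otimes 1)W$ and the quasi-equivalence are completely transparent. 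Two remarks on the ``bookkeeping'' you flag as the main obstacle. First, your disintegration claim is indeed true with no modular-function obstruction, and rather than the uniqueness-of-disintegration argument you sketch (where ``invariant for each fixed $h_1$, a.e.\ $\omega$'' versus ``a.e.\ $\omega$, invariant for all $h_1$'' needs separability and local finiteness of the fibre measures), it can be derived cleanly from the rho-function Weil formula already recalled in Section 2 of the paper: apply that formula to $y\mapsto f(y^{-1})\Delta_G(y)^{-1}$ and substitute $x=y^{-1}$; the identity $\int_H\phi(h^{-1})\Delta_H(h)^{-1}\,d\mu_H(h)=\int_H\phi(h)\,d\mu_H(h)$ then gives exactly $\int_G f\,d\mu_G=\int_{H\backslash G}\int_H f(h\,s(\omega))\,d\mu_H(h)\,d\nu(\omega)$, with all the $\Delta_G$- and $\rho$-factors absorbed into $\nu$ as you anticipated. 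Second, the only real restriction in your argument is the measurable cross-section: this exists for second countable $G$ (Mackey), and for arbitrary locally compact $G$ one must invoke a deeper selection theorem (Feldman--Greenleaf, Kehlet); note that the paper's Proposition is stated for arbitrary locally compact $G$, and its induced-representation route avoids selections altogether, which is what that machinery buys. Conversely, your approach buys self-containedness (no appeal to Blattner) and an equally explicit $\Phi$ with $\Phi(\lambda_\ell^H(h))=\lambda_\ell^G(h)$, which is the property the rest of the paper actually uses.
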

\begin{proof}
    Let $\tilde{\rho}_h \,\,(h\in H)$ be the right regular representation of $H$. Denote $\mathcal{F}(G,\tilde{\rho})$ as the set of all continuous mappings $\xi\colon G\to L^2(H)$ with the following properties; 
    \begin{itemize}
        \item $\xi$ has a compact support modulo $H$.
        \item $\xi(xy)=\Delta_G(y)^{-1/2}\Delta_H(x)^{1/2}\tilde{\rho}_{y^{-1}}\xi(x)$ for all $x \in G, y \in H$.
    \end{itemize}
    Note that $\mathcal{F}(G,\tilde{\rho})$ is dense in $\mathcal{H}(\text{Ind}_H^G\tilde{\rho})$. Define an intertwiner operator $U\colon L^2(G)\to \mathcal{H}(\text{Ind}_H^G\tilde{\rho})$ by $$(Uf)(x)(h)=\Delta_G(x)^{-1/2}f(hx^{-1})\,\, (f\in C_c(G)).$$ In this case, we obtain $U^*\colon \mathcal{H}(\text{Ind}_H^G\tilde{\rho})\to L^2(G)$ by
    $$(U^*\xi)(x)=\Delta_G(x)^{-1/2}\xi(x^{-1})(e) \,\, (\xi\in \mathcal{F}(G,\tilde{\rho})).$$
    In addition, define $\widehat{\cdot}\colon \B(L^2(H))\to \B(\mathcal{H}(\text{Ind}_H^G\tilde{\rho}))$ by $$(\widehat{T}\xi)(x)=T\xi(x) \,\, (\xi\in \mathcal{H}(\text{Ind}_H^G\tilde{\rho}), x\in G).$$ It can be shown that $\Phi\colon L(H) \to \B(L^2(G));T \mapsto U^*\widehat{T}U$ gives a desired $*$-isomorphism can be shown using \cite{Blattner1962}. Moreover, $\Phi(\lambda_\ell^H(h))=\lambda_\ell^G(h)\,\,(h\in H)$ holds.
\end{proof}

We give the definition of the canonical weight of $L(G)$.

\begin{definition}
    For $\xi \in L^2(G)$, we say that $\xi$ is left bounded if there is a bounded operator $\pi_{l}(\xi) \in\B(L^2(G))$ such that $\pi_{l}(\xi)f=\xi * f$ for all $f \in C_c(G)$. Similarly, $\xi$ is right bounded if there is a bounded operator $\pi_{r}(\xi) \in\B(L^2(G))$ such that $\pi_{r}(\xi)f=f*\xi$ for all $f \in C_c(G)$.
\end{definition}

\begin{definition}\label{defofplweight}
    For a locally compact group $G$, define the weight $\varphi_{G}$ on $L(G)_+$ as \begin{equation*}
        \varphi_{G}(x)=
        \begin{cases}
            \|\xi\|_2^2 & x=\pi_{l}(\xi)^*\pi_{l}(\xi) \,\,\text{for some left bounded}\,\, \xi \in L^2(G) \\
            \infty & \text{otherwise}.
        \end{cases}
    \end{equation*}
    It is known to be a normal faithful semifinite weight. We call it the Plancherel weight of $L(G)$. Note that $\varphi_G$ depends on the left Haar measure $\mu_G$ on $G$ up to a constant.
\end{definition}

It is also known that $G$ is unimodular if and only if its Plancherel weight is tracial. The following lemmas are well known. We include the proof for completeness.  

\begin{lemma}
    Any $f\in C_c(G)$ is both left and right bounded.
\end{lemma}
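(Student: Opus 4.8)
The plan is to show that for any $f \in C_c(G)$, both the left convolution operator and the right convolution operator extend to bounded operators on $L^2(G)$. First I would recall that for $f, g \in C_c(G)$ the convolution $f * g$ again lies in $C_c(G) \subseteq L^2(G)$, so the linear maps $g \mapsto f * g$ and $g \mapsto g * f$ are densely defined on $L^2(G)$ with domain containing $C_c(G)$; the content is the boundedness estimate. The natural route is to bound $\|f * g\|_2$ by $C_f \|g\|_2$ with a constant depending only on $f$.

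To get the left-bounded estimate, I would write $(f*g)(x) = \int_G f(y)\, g(y^{-1}x)\, d\mu_G(y)$ and estimate $\|f * g\|_2$ via Minkowski's integral inequality: since left translation $g \mapsto g(y^{-1}\,\cdot\,)$ is an isometry of $L^2(G, \mu_G)$ (left-invariance of $\mu_G$), one gets
\begin{align*}
\|f * g\|_2 \le \int_G |f(y)|\, \|g(y^{-1}\,\cdot\,)\|_2 \, d\mu_G(y) = \|f\|_1\, \|g\|_2.
\end{align*}
Thus $\pi_\ell(f)$ is bounded with $\|\pi_\ell(f)\| \le \|f\|_1$, proving $f$ is left bounded. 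For the right-bounded claim I would handle the extra modular factor coming from the definition of $\rho = \lambda_r^G$: writing $(g * f)(x) = \int_G g(y)\, f(y^{-1}x)\, d\mu_G(y) = \int_G g(xz)\, f(z^{-1})\, \Delta_G(z)^{-1}\, d\mu_G(z)$ after the substitution $z = x^{-1}y$ and using the modular relation for the change of variable, and then applying Minkowski's integral inequality together with the fact that right translation composed with the appropriate $\Delta_G^{1/2}$-weighting is an isometry, one obtains a bound $\|g * f\|_2 \le C_f \|g\|_2$ with $C_f = \int_G |f(z^{-1})|\, \Delta_G(z)^{-1/2}\, d\mu_G(z) = \int_G |f(z)| \Delta_G(z)^{-1/2} d\mu_G(z)$, which is finite because $f$ has compact support and $\Delta_G$ is continuous hence bounded above and below on that support.

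The main obstacle, and the only place where care is genuinely needed, is keeping track of the modular function $\Delta_G$ in the right-convolution computation: the asymmetry between left and right Haar measure means the naive isometry argument that works cleanly on the left must be corrected by a factor of $\Delta_G^{-1/2}$, and one must verify that the substitution $z = x^{-1}y$ produces exactly the compensating Jacobian so that the resulting kernel is integrable against a compactly supported $f$. Once the two convolution estimates are in hand, boundedness is immediate and the operators $\pi_\ell(f), \pi_r(f)$ are the required extensions, so $f$ is both left and right bounded. I expect the left case to be essentially immediate and the right case to require one careful change of variables, but no deeper difficulty.
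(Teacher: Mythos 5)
Your approach coincides with the paper's: for left boundedness the estimate $\|f*\xi\|_2 \le \|f\|_1\|\xi\|_2$, and for right boundedness a change of variables followed by Minkowski's integral inequality and the unitarity of the weighted right translation $\xi \mapsto \Delta_G(z)^{1/2}\xi(\cdot\, z)$. The left half is fine. In the right half, however, the step you yourself single out as the crux contains an error: since $\mu_G$ is a \emph{left} Haar measure, it is invariant under the left translation $y \mapsto xz$, so the substitution $z = x^{-1}y$ produces no Jacobian at all. The correct identity is $(g*f)(x) = \int_G g(xz)\,f(z^{-1})\,d\mu_G(z)$, with no factor $\Delta_G(z)^{-1}$; a modular Jacobian would appear for right translations or for inversion, not here. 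This Jacobian-free identity is exactly what the paper uses, in the form $\xi*f = \int_G \Delta_G(y)^{-1/2}f(y^{-1})\,\rho_y\xi\,d\mu_G(y)$.

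The slip shows up as an internal inconsistency in your constant: applying Minkowski and $\|g(\cdot\, z)\|_2 = \Delta_G(z)^{-1/2}\|g\|_2$ to your displayed formula (with the spurious $\Delta_G(z)^{-1}$) would yield $\int_G |f(z^{-1})|\,\Delta_G(z)^{-3/2}\,d\mu_G(z)$, whereas the constant you actually state, $C_f = \int_G |f(z)|\,\Delta_G(z)^{-1/2}\,d\mu_G(z)$, is the one that follows from the correct identity and agrees with the paper's constant. The damage is limited: either integral is finite because $f$ has compact support, so $\pi_r(f)$ is bounded in any case and the lemma survives. But your diagnosis of where care is needed is inverted — the change of variables costs nothing, and the modular function enters only through the $L^2$-norm of right translation.
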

\begin{proof}
    The left boundness follows from the inequality $$\|f*\xi\|_2 \le \|f\|_1\|\xi\|_2\,\, \text{for all} \,\, \xi \in L^2(G).$$
    Thus, we have $\pi_{l}(f)\xi=f*\xi$ for all $\xi \in L^2(G)$. For the right boundness, fix $\xi \in L^2(G)$. Then, $$\xi*f(x)=\int\xi(y)f(y^{-1}x)\,dy = \int\xi(xy)f(y^{-1})\,dy = \int\Delta_G(y)^{-1/2}\rho_y\xi(x)f(y^{-1})\,dy.$$
    Hence, $$\xi*f=\int\Delta_G(y)^{-1/2}f(y^{-1})\rho_y\xi(\cdot)\,dy.$$
    By Minkowski’s inequality for integrals, we obtain 
    \begin{align*}
        \|\xi*f\|_2 \le \int\|\Delta_G(y)^{-1/2}f(y^{-1})\rho_y\xi\|_2\,dy & =  \int |\Delta_G(y)^{-1/2}f(y^{-1})|\|\rho_y\xi\|_2\,dy \\ & = \|\xi\|_2\int |\Delta_G(y)^{-1/2}f(y^{-1})|\,dy \le C\|\xi\|_2
    \end{align*}
    for some $C > 0$. This shows that $f$ is right bounded and $\pi_r(f)\xi=\xi*f$ for all $\xi \in L^2(G)$.
\end{proof}

Now, we introduce some notation. For any complex function $f$ on $G$, we put  $$f^{\#}(x)= \Delta_G(x)^{-1}\overline{f}(x^{-1}),\,\,\, f^*(x)= \Delta_G(x)^{-1/2}\overline{f}(x^{-1}), \,\,\, f^{\flat}(x)=\overline{f}(x^{-1}).$$ 
$\#$, * and $\flat$ are involutions of the convolution algebra $C_c(G)$. Moreover, * is 
an isometry on $L^2(G)$. $\#$ makes $C_c(G)$ into the left Hilbert algebra and $\flat$ makes $C_c(G)$ into the right Hilbert algebra with $L^2$ inner product. 

\begin{lemma}\label{sumprodadj}
    For $f,g \in C_c(G)$, $$\pi_l(f+g)=\pi_l(f)+\pi_l(g),\,\,\, \pi_l(f^{\#}*g)=\pi_l(f)^*\pi_l(g),\,\,\, \pi_l(f^{\#})=\pi_l(f)^*.$$
\end{lemma}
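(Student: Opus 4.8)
The statement to prove is Lemma \ref{sumprodadj}, which asserts three identities about the left-convolution operators $\pi_l$ on $C_c(G)$: additivity, multiplicativity with respect to the $\#$-involution and convolution, and the adjoint formula $\pi_l(f^\#) = \pi_l(f)^*$.

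\textbf{Plan.} The strategy is to verify each identity by evaluating both sides on an arbitrary element $h \in C_c(G)$ and using that $C_c(G)$ is dense in $L^2(G)$ together with the previous lemma, which guarantees $\pi_l(f) h = f * h$ for every $f, h \in C_c(G)$. Additivity is immediate: $\pi_l(f+g) h = (f+g)*h = f*h + g*h = \pi_l(f)h + \pi_l(g)h$ since convolution is linear in its left argument, and density extends this to all of $L^2(G)$. For the multiplicativity identity, I would compute $\pi_l(f^\# * g) h = (f^\# * g) * h$ and use associativity of convolution to rewrite this as $f^\# * (g * h)$. The goal is to match this with $\pi_l(f)^* \pi_l(g) h = \pi_l(f)^* (g * h)$, so the content reduces to showing $\pi_l(f)^* = \pi_l(f^\#)$, i.e.\ the third identity; granting that, $\pi_l(f^\#)(g*h) = f^\# * (g*h)$ closes the computation. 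Thus the second identity follows formally from the third together with associativity, so the real work is the adjoint formula.

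\textbf{The adjoint formula.} This is the heart of the lemma and the main obstacle. The plan is to show $\langle \pi_l(f)\xi, \eta\rangle = \langle \xi, \pi_l(f^\#)\eta\rangle$ for $\xi, \eta$ in a dense subspace, say $C_c(G)$, which identifies $\pi_l(f)^* = \pi_l(f^\#)$ on the domain and hence everywhere by boundedness. Concretely, I would expand
\begin{align*}
\langle f * \xi, \eta\rangle &= \int_G \left(\int_G f(y)\xi(y^{-1}x)\,d\mu_G(y)\right)\overline{\eta(x)}\,d\mu_G(x).
\end{align*}
Then I would apply Fubini and the substitution relating $\int f(y)(\cdots)\,d\mu_G(y)$ to an integral against $f^\#$, invoking the left-invariance of $\mu_G$ and the defining relation $\int g(y^{-1})\,d\mu_G(y) = \int g(y)\Delta_G(y)^{-1}\,d\mu_G(y)$ to absorb the modular factor. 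The definition $f^\#(x) = \Delta_G(x)^{-1}\overline{f}(x^{-1})$ is engineered precisely so that the modular function produced by this change of variables is exactly cancelled, leaving $\langle \xi, f^\# * \eta\rangle$.

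\textbf{Expected difficulty.} The only delicate point is bookkeeping the modular function $\Delta_G$ correctly through the change of variables $y \mapsto y^{-1}$ and the interchange of the order of integration; Fubini is justified because $f, \xi, \eta$ are compactly supported and the integrand is jointly continuous with compact support. Since the excerpt has already established that $\#$ is an involution making $C_c(G)$ a left Hilbert algebra with the $L^2$ inner product, I expect the adjoint identity to be essentially a restatement of the left Hilbert algebra axiom $\langle \pi_l(f)\xi, \eta\rangle = \langle \xi, \pi_l(f^\#)\eta\rangle$, so the cleanest route may be to cite that structure directly rather than recomputing the integral by hand.
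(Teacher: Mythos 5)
Your proposal is correct and follows essentially the same route as the paper: both arguments verify the identities on the dense subspace $C_c(G)$ and reduce everything to the adjoint relation $\nai{\xi}{f*\eta}_2=\nai{f^{\#}*\xi}{\eta}_2$, i.e.\ the left Hilbert algebra axiom for $\#$ that the paper invokes (your suggested ``cleanest route''), with your Fubini/change-of-variables sketch being just the standard verification of that axiom. The only cosmetic difference is that you deduce the second identity from the third via associativity of convolution, whereas the paper proves it by the analogous inner-product computation $\nai{\pi_l(f)^*\pi_l(g)\xi}{\eta}_2=\nai{g*\xi}{f*\eta}_2=\nai{(f^{\#}*g)*\xi}{\eta}_2$; the two are formally equivalent.
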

\begin{proof}
    Fix $f,g \in C_c(G)$. It is obvious that $f+g, f^{\#},f^{\#}*g\in C_c(G)$. Then, for each $\xi, \eta \in C_c(G)$, $$(\pi_l(f)+\pi_l(g))\xi=\pi_l(f)\xi+\pi_l(g)\xi=f*\xi+g*\xi=(f+g)*\xi,$$
    $$\nai{\pi_l(f)^*\pi_l(g)\xi}{\eta}_2=\nai{\pi_l(g)\xi}{\pi_l(f)\eta}_2=\nai{g*\xi}{f*\xi}_2=\nai{f^{\#}*g*\xi}{\eta}_2
    =\nai{(f^{\#}*g)*\xi}{\eta}_2,$$ $$\nai{\pi_l(f)^*\xi}{\eta}_2=\nai{\xi}{\pi_l(f)\eta}_2=\nai{\xi}{f*\eta}_2=\nai{f^{\#}*\xi}{\eta}_2.$$ These calculations implies desired formulas.
\end{proof}

\begin{lemma}\label{rbddimp*lbdd}
    If $g \in L^2(G)$ is right bounded, then $g^*$ is left bounded.
\end{lemma}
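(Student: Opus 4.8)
The plan is to exploit the $*$-operation as a conjugate-linear isometry of $L^2(G)$ that reverses convolution, thereby converting left convolution by $g^*$ into right convolution by $g$, which is already controlled by the bounded operator $\pi_r(g)$.

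First I would fix $f \in C_c(G)$ and observe that the convolutions $g^* * f$ and $f^* * g$ are well-defined continuous functions on $G$: the defining integrals converge pointwise by Cauchy--Schwarz, since $f, f^* \in C_c(G)$ while $g, g^* \in L^2(G)$. The crucial ingredient is the pointwise identity $(g^* * f)^* = f^* * g$, which is the mixed-factor analogue of the fact (recorded in the excerpt) that $*$ is an anti-multiplicative involution of $C_c(G)$, namely $(a*b)^* = b^* * a^*$ together with $(g^*)^* = g$. I would verify it directly from the integral formulas by a change of variables, since $g^*$ need not lie in $C_c(G)$ and so the $C_c$-level identity cannot be quoted verbatim.

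Next, because $f^* \in C_c(G)$ and $g$ is right bounded, the definition of $\pi_r(g)$ gives $f^* * g = \pi_r(g) f^*$, so that $f^* * g \in L^2(G)$ with $\|f^* * g\|_2 \le \|\pi_r(g)\|\,\|f^*\|_2 = \|\pi_r(g)\|\,\|f\|_2$, using in the last equality that $*$ is an $L^2$-isometry. Since $(g^* * f)^* = f^* * g$ now lies in $L^2(G)$ and $*$ is an isometric bijection of $L^2(G)$, I conclude that $g^* * f = \bigl((g^* * f)^*\bigr)^* \in L^2(G)$ and that $\|g^* * f\|_2 = \|f^* * g\|_2 \le \|\pi_r(g)\|\,\|f\|_2$.

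Finally, the linear map $f \mapsto g^* * f$ is therefore bounded on the dense subspace $C_c(G) \subseteq L^2(G)$, so it extends uniquely to a bounded operator $\pi_l(g^*) \in \B(L^2(G))$ with $\pi_l(g^*) f = g^* * f$ for all $f \in C_c(G)$; this is exactly the assertion that $g^*$ is left bounded. I expect the only genuinely delicate point to be the justification of the pointwise identity $(g^* * f)^* = f^* * g$, with its attendant integrability and measurability bookkeeping when one factor is merely in $L^2(G)$ rather than in $C_c(G)$; the remainder is a formal manipulation followed by a standard density and extension argument.
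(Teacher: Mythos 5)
Your proposal is correct and follows essentially the same route as the paper's proof: both rest on the chain $\|g^**f\|_2=\|(g^**f)^*\|_2=\|f^**g\|_2\le\|\pi_r(g)\|\,\|f\|_2$, using that $*$ is an $L^2$-isometry, that $*$ reverses convolution, and the right boundedness of $g$. Your additional care in justifying the identity $(g^**f)^*=f^**g$ when $g^*$ is merely in $L^2(G)$ is a legitimate refinement of a step the paper takes for granted, but it does not change the argument.
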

\begin{proof}
    For $f \in C_c(G)$, 
    $$\|g^**f\|_2= \|(g^**f)^*\|_2 = \|f^**g\|_2 \le \|f^*\|_2\|\pi_r(g)\|= \|f\|_2\|\pi_r(g)\|.$$
    Hence, there is a bounded operator $\pi_l(g^*) \in \B(L^2(G))$ such that $\pi_l(g^*)f= g^**f$ for all $f \in C_c(G)$. This shows that $g^*$ is left bounded.
\end{proof}

In fact, the formula $\pi_l(f^{\#}*g)=\pi_l(f)^*\pi_l(g)$ holds for left bounded $L^2$ functions $f,g$ (c.f. \cite[Lemma 2.8]{Haagerupdualweight}).
In addition, we define $$ P (G) =\text{the set of all continuous functions of positive type on G.}$$
Following \cite[P 126]{Haagerupdualweight}, we write $\varphi\ll K\cdot \delta$ for $\varphi\in P(G)$ and $K>0$, if there exists a $K>0$ such that $\int\varphi(x)(f^{\#}*f)(x)\, dx \le K(f^{\#}*f)(e)$ for all $f \in C_c(G)$.

\begin{lemma}\label{rbddlesskdelta}
    For any right bounded $f\in L^2(G) \cap P(G)$,  $f \ll K \cdot \delta$ holds, where $K=\|\pi_r(f)\|$.
\end{lemma}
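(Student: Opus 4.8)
The plan is to test the inequality against an arbitrary $g\in C_c(G)$, renaming the dummy function in the definition of $\ll K\cdot\delta$ (since here $f$ already denotes the positive-type function), and to reduce both sides to explicit integrals. First I would compute $(g^{\#}*g)(w)$: unwinding $g^{\#}(y)=\Delta_G(y)^{-1}\overline{g}(y^{-1})$ and applying the inversion formula $\int_G F(y)\,dy=\int_G F(y^{-1})\Delta_G(y)^{-1}\,dy$ for the left Haar measure, the modular factors cancel and one gets $(g^{\#}*g)(w)=\int_G\overline{g(y)}\,g(yw)\,dy$. In particular $(g^{\#}*g)(e)=\|g\|_2^2$, so the right-hand side of the desired inequality is exactly $K\|g\|_2^2$.

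Next I would identify the left-hand side $\int_G f(w)(g^{\#}*g)(w)\,dw$. Substituting $u=yw$ in the resulting double integral and using left invariance gives
\[
\int_G f(w)(g^{\#}*g)(w)\,dw=\int_G\int_G \overline{g(y)}\,f(y^{-1}u)\,g(u)\,dy\,du .
\]
The right-hand side is precisely the integrated form of positive-definiteness for $f\in P(G)$, so this quantity is real and $\ge 0$. Moreover, writing $\check{f}(x):=f(x^{-1})$, the inner integral $\int_G f(y^{-1}u)g(u)\,du$ equals $(g*\check{f})(y)$, so the whole expression equals $\nai{\pi_r(\check f)g}{g}_2$, provided $\check f$ is right bounded.

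Finally I would pin down the constant and conclude. Since $f$ is of positive type, $f(x^{-1})=\overline{f(x)}$, whence $\check f=\overline f$ (the complex conjugate function). The antilinear isometric involution $C\colon L^2(G)\to L^2(G)$, $Ch=\overline h$, satisfies $C\pi_r(f)C=\pi_r(\overline f)$ on $C_c(G)$ (because $\overline{h*f}=\overline h*\overline f$); hence $\overline f$ is right bounded and, as conjugation by an antilinear isometry preserves the operator norm, $\|\pi_r(\overline f)\|=\|\pi_r(f)\|=K$. Combining this with the Cauchy--Schwarz inequality, and using that the middle quantity is real and nonnegative, yields
\[
\int_G f(w)(g^{\#}*g)(w)\,dw=\nai{\pi_r(\check f)g}{g}_2\le \|\pi_r(\check f)\|\,\|g\|_2^2=K(g^{\#}*g)(e),
\]
which is exactly $f\ll K\cdot\delta$. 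I expect the main obstacle to be purely bookkeeping: tracking the modular function through the Haar-measure manipulations, and above all noticing that the operator that appears is right convolution by $\overline f$ rather than by $f$, so that one must invoke the conjugation $C$ in order to recover the stated constant $K=\|\pi_r(f)\|$.
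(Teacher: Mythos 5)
Your proof is correct and follows essentially the same route as the paper: both reduce $\int f(x)(g^{\#}*g)(x)\,dx$ to a quadratic form $\nai{\pi_r(\cdot)\,\xi}{\xi}_2$ of a right-convolution operator and then bound it by the operator norm via Cauchy--Schwarz. The only difference is bookkeeping about where the complex conjugation lands: the paper conjugates the test function, writing the integral as $\nai{\pi_r(f)\overline{g}}{\overline{g}}_2$ so that the constant $\|\pi_r(f)\|$ appears immediately, whereas you conjugate $f$ (using $\check f=\overline f$ for positive-type functions) and then invoke the antilinear intertwiner $C$ to get $\|\pi_r(\overline f)\|=\|\pi_r(f)\|$ --- the two quantities coincide, since $\nai{\pi_r(\overline f)g}{g}_2=\overline{\nai{\pi_r(f)\overline g}{\overline g}_2}$ and both are real.
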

\begin{proof}
    Fix $\xi \in C_c(G)$, then
    \begin{align*}
       0\le \int f(x)(\xi^{\#}*\xi)(x)\, dx &= \nai{f}{\overline{\xi^{\#}*\xi}}_2 =\nai{f}{\overline{\xi}^{\#}*\overline{\xi}}_2 = \nai{f}{\pi_l(\overline{\xi})^*\overline{\xi}}_2 = \nai{\pi_l(\overline{\xi})f}{\overline{\xi}}_2 = \nai{\overline{\xi}*f}{\overline{\xi}}_2 \\ &= \nai{\pi_r(f)\overline{\xi}}{\overline{\xi}}_2 \le \|\pi_r(f)\|\|\overline{\xi}\|_2^2 = \|\pi_r(f)\|\|\xi\|_2^2 = \|\pi_r(f)\|(\xi^{\#}*\xi)(e).
    \end{align*}
    We use the equation $\pi_l(\xi^{\#})=\pi_l(\xi)^*$ for $\xi \in C_c(G)$. 
\end{proof}

\begin{lemma}\label{convresol}
    Suppose $\varphi \in P(G)$ and $\varphi \ll K \cdot \delta$ for some $K>0$. Then, there exists a right bounded $g \in L^2(G)$ such that $\varphi=g*g^{\flat}$.
\end{lemma}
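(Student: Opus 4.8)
The plan is to read the hypothesis $\varphi\ll K\cdot\delta$ as the boundedness of a positive sesquilinear form on $C_c(G)$ and then to extract the convolution ``square root'' $g$ of $\varphi$ as an $L^2$-limit built from an approximate identity. First I would reformulate the hypothesis: since $(f^{\#}*f)(e)=\|f\|_2^2$, the relation $\varphi\ll K\cdot\delta$ says exactly that $0\le\int\varphi(x)(f^{\#}*f)(x)\,dx\le K\|f\|_2^2$ for all $f\in C_c(G)$, the left inequality being positivity of type. Hence the form $B(f,h):=\int\varphi(x)(h^{\#}*f)(x)\,dx$ on $C_c(G)$ is positive and bounded by $K\nai{\cdot}{\cdot}_2$, so it extends to a bounded positive operator $T\in\B(L^2(G))$ with $0\le T\le K$ and $\nai{Tf}{h}_2=B(f,h)$ for $f,h\in C_c(G)$. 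A short computation (substituting $y\mapsto y^{-1}$ as in the proof of Lemma \ref{rbddlesskdelta}) gives the explicit formula $(Tf)(y)=\int\varphi(y^{-1}w)f(w)\,dw$ for $f\in C_c(G)$.

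Next I would show that $T$ lies in the commutant $L(G)'=\{\rho(g):g\in G\}''$. From the explicit formula one checks $T\lambda(s)=\lambda(s)T$ directly: $(T\lambda(s)f)(y)=\int\varphi(y^{-1}w)f(s^{-1}w)\,dw=\int\varphi((s^{-1}y)^{-1}w)f(w)\,dw=(Tf)(s^{-1}y)$, using only left-invariance of $\mu_G$. Thus $T$ commutes with every $\lambda(s)$, hence with $L(G)=\lambda(G)''$, so $T\in L(G)'$ and likewise $T^{1/2}\in L(G)'$.

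Then I would manufacture $g$. Fix an approximate identity $(u_i)\subseteq C_c(G)$ with $u_i=u_i^{\#}\ge0$, $\int u_i=1$, and supports shrinking to $\{e\}$, and set $g_i:=T^{1/2}u_i\in L^2(G)$. The crucial estimate is that $(g_i)$ is Cauchy: $\|g_i-g_j\|_2^2=\nai{T(u_i-u_j)}{u_i-u_j}_2=B(u_i-u_j,u_i-u_j)$, and since $u_j^{\#}*u_i$ is an approximate identity with $\int u_j^{\#}*u_i=1$, continuity of $\varphi$ forces each of the four terms $B(u_\bullet,u_\bullet)\to\varphi(e)$, whence $\|g_i-g_j\|_2\to0$. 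Put $g:=\lim_i g_i\in L^2(G)$. To see that $g$ is right bounded with $\pi_r(g)=T^{1/2}$, take $f\in C_c(G)$ and use that $T^{1/2}\in L(G)'$ commutes with $\pi_l(f)$: $f*g=\lim_i f*(T^{1/2}u_i)=\lim_i\pi_l(f)T^{1/2}u_i=\lim_i T^{1/2}(f*u_i)=T^{1/2}f$, since $f*u_i\to f$ in $L^2$. Hence $\pi_r(g)=T^{1/2}$ and $g$ is right bounded.

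Finally I would identify $\varphi=g*g^{\flat}$. Since $\flat$ is the involution of the right Hilbert algebra, $\pi_r(g^{\flat})=\pi_r(g)^*$, so $\pi_r(g*g^{\flat})=\pi_r(g)^*\pi_r(g)=T$; equivalently, evaluating the continuous positive-type function $g*g^{\flat}(x)=\nai{g}{\lambda(x)g}_2$ through the approximate identity and the formula for $T$ recovers the integral defining $B$ and yields $g*g^{\flat}=\varphi$ pointwise, once one tracks the inversion $\varphi(x^{-1})=\overline{\varphi(x)}$ (should a conjugate appear, one simply replaces $\varphi$ by $\overline\varphi\in P(G)$, which also satisfies $\overline\varphi\ll K\cdot\delta$, at the outset). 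Both functions being continuous, equality on the dense set of test convolutions $h^{\#}*f$ upgrades to equality everywhere. The main obstacle is precisely this passage from the bounded operator $T^{1/2}$ to an honest vector $g\in L^2(G)$: because $L^2(G)$ has no unit for convolution one cannot just ``apply $T^{1/2}$ to $\delta_e$,'' and it is the approximate-identity Cauchy argument—whose convergence is powered exactly by the bound $T\le K$, i.e. by $\varphi\ll K\cdot\delta$—that makes this work; the remaining care is to ensure the $L^2$-identity $g*g^{\flat}=\varphi$ holds for the continuous representative rather than merely at the level of operators.
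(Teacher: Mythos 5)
Your proof is correct, but it takes a genuinely different route from the paper's. The paper follows Haagerup: it forms the cyclic (GNS) representation $(\pi_\varphi,\mathcal{H}_\varphi,\xi_\varphi)$ attached to the positive-type function $\varphi$, uses the bound $\varphi\ll K\cdot\delta$ to produce a bounded intertwiner $T\colon L^2(G)\to\mathcal{H}_\varphi$ with $Tf=\pi_\varphi(f)\xi_\varphi$, takes the polar decomposition $T=U|T|$ (cyclicity of $\xi_\varphi$ makes $U$ a co-isometry, $UU^*=1$), and then simply pulls the cyclic vector back: setting $\xi=U^*\xi_\varphi$ and $g=\overline{\xi}$ gives $\varphi(x)=\nai{\lambda_x\xi}{\xi}_2=(g*g^{\flat})(x)$ in one line, right-boundedness of $\xi$ being the very same integral estimate that gave boundedness of $T$. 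You instead stay entirely inside $\B(L^2(G))$: the form defines a positive operator $0\le T\le K$ lying in the commutant $L(G)'$, and $g$ is manufactured as the $L^2$-limit of $T^{1/2}u_i$ along an approximate identity, the Cauchy property being powered by continuity of $\varphi$ at $e$; this is essentially Godement's square-root construction. Both arguments are complete. What the paper's route buys is brevity: the vector $g$ appears instantly from the cyclic vector, with no limiting procedure. What your route buys is that it avoids GNS and polar decomposition altogether, makes the translation $\varphi\ll K\cdot\delta\iff 0\le T\le K$ explicit, and yields the extra information $\pi_r(g)=T^{1/2}\ge 0$, i.e.\ your convolution square root is positive as an operator, which the paper's $g$ need not be. One point you must actually carry out: the conjugation you flagged is not hypothetical. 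With your normalizations the identification produces $g*g^{\flat}=\overline{\varphi}=\varphi(\cdot^{-1})$ (one sees this by computing $\nai{Tf}{h}=\nai{f*g}{h*g}=\int\int f(w)\overline{h(y)}\,(g*g^{\flat})(w^{-1}y)\,dw\,dy$ against $\nai{Tf}{h}=\int\int f(w)\overline{h(y)}\,\varphi(y^{-1}w)\,dw\,dy$), so the replacement of $\varphi$ by $\overline{\varphi}$ at the outset, or of $g$ by $\overline{g}$ at the end, is genuinely required; it is reassuring that the paper faces exactly the same issue and resolves it by taking $g=\overline{\xi}$ rather than $g=\xi$.
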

\begin{proof}
    We apply the proof of \cite[Proposition 2.9]{Haagerupdualweight}.
    Let $(\pi_{\varphi}, \mathcal{H}_{\varphi}, \xi_{\varphi})$ be the cyclic representation induced by $\varphi$. For $f \in C_c(G)$, 
    \begin{align*}
        \|\pi_{\varphi}(f)\xi_{\varphi}\|^2 = \nai{\pi_{\varphi}(f^{\#}*f)\xi_{\varphi}}{\xi_{\varphi}} &= \int (f^{\#}*f)(x)\nai{\pi_{\varphi}(x)\xi_{\varphi}}{\xi_{\varphi}}\, dx \\ &=\int (f^{\#}*f)(x) \varphi(x)\, dx \le K(f^{\#}*f)(e) = K\|f\|_2^2 .\end{align*}
    Hence, there exists a bounded operator $T\colon L^2(G) \to \mathcal{H}_{\varphi}$ such that $Tf=\pi_{\varphi}(f)\xi_{\varphi}$ for all $f \in C_c(G)$. A simple calculation shows that $T\lambda(f)=\pi_{\varphi}(f)T \,\, ( f \in C_c(G))$. Indeed, since $\lambda(f)=\pi_l(f)$ for all $f \in C_c(G)$, for $g \in C_c(G)$, 
    $$T\lambda(f)g=T(f*g)=\pi_{\varphi}(f*g)\xi_{\varphi} = \pi_{\varphi}(f)\pi_{\varphi}(g)\xi_{\varphi} = \pi_{\varphi}(f)Tg.$$
    Let $T=U|T|$ be the polar decomposition. It follows that $U\lambda(f)=\pi_{\varphi}(f)U$ and $UU^*=P_{\overline{\text{ran}}(T)}=P_{\mathcal{H}_{\varphi}}=1$ since $\xi_{\varphi}$ is cyclic. In particular, $U\lambda_x=\pi_{\varphi}(x)U \,\,( x \in G)$. Put $\xi=U^*\xi_{\varphi} \in L^2(G)$ and $g=\overline{\xi}$. For $x \in G$, 
    \begin{align*}
        \varphi(x) & = \nai{\pi_{\varphi}(x)\xi_{\varphi}}{\xi_{\varphi}} = \nai{\pi_{\varphi}(x)UU^*\xi_{\varphi}}{\xi_{\varphi}} =\nai{U\lambda_xU^*\xi_{\varphi}}{\xi_{\varphi}} = \nai{\lambda_x\xi}{\xi}_2 \\ &= \int \xi(x^{-1}y)\overline{\xi}(y)\, dy = \int \overline{\xi}^{\flat}(y^{-1}x) \overline{\xi}(y) \, dy = (g*g^{\flat})(x). 
    \end{align*}
    It remains to show that $g$ is right bounded. It suffices to show the right boundness of $\xi$. For $f \in C_c(G)$, 
    \begin{align*}
        \|f*\xi\|_2^2=\nai{f*\xi}{f*\xi}_2
        & =\nai{f^{\#}*f*\xi}{\xi}_2 = \nai{\lambda(f^{\#}*f)\xi}{\xi}_2 \\ & = \int (f^{\#}*f)(x)\nai{\lambda_x\xi}{\xi}_2\, dx = \int (f^{\#}*f)(x)\varphi(x) \, dx \le K\|f\|_2^2 .
    \end{align*}
    Hence, there exists $\pi_r(\xi)\in \B(L^2(G))$ such that $\pi_r(\xi)f = f* \xi $ for all $f \in C_c(G)$.
\end{proof}

Now, we shall show that $\varphi_G|L(H)=C\cdot\varphi_H$ for some constant $C >0$ if and only if $H$ is open. 
Before we prove the theorem, we give a necessary and sufficient condition for the semifiniteness of the restriction of the Plancherel weight. Professor Reiji Tomatsu informed us that this was shown in the case of quantum groups (see e.g. \cite[Theorem 7.5]{KKS2016}). However, we provide the group von Neumann algebraic proof for reader's convenience.
\par
We recall a strongly quasi-invariant Radon measure on $G/H$ and a rho-function for the pair $(G,H)$. We refer to \cite[Section 2.6]{Folland} for the details. Let $G$ be a locally compact group and $H$ be a closed subgroup of $G$. Then, there exists a continuous function $\rho\colon G \to (0,\infty)$ such that 
$$\rho(xh)=\frac{\Delta_H(h)}{\Delta_G(h)}\rho(x)\,\,(g\in G, h\in H).$$ This function is called a rho-function. Moreover, suppose $\mu$ is a Radon measure on $G/H$. For $x \in G$, we define the translate $\mu_x$ of $\mu$ by $\mu_x(E)=\mu(xE)$, where $E$ is a Borel subset of $G/H$. $\mu$ is said to be strongly quasi-invariant if there is a continuous function $\lambda\colon G \times G/H \to(0,\infty)$ such that $d\mu_x(p)=\lambda(x,p)d\mu(p)$ for all $x\in G, p\in G/H$. A rho-function $\rho$ produces a strongly quasi-invariant measure $\mu$ on $G/H$ which satisfies 
$$\int_Gf(x)\rho(x)\,d\mu_G(x)=\int_{G/H}\int_H f(xh)\,d\mu_H(h)d\mu(xH)\,\,(f\in C_c(G)).$$ 
\par
Now we give a characterization of openness of $H$. This is likely to be known, but we include the proof for completeness.

\begin{proposition}\label{openiffmuposi}
    For a locally compact group $G$, a closed subgroup $H$ of $G$ and $\mu$ is a strongly quasi-invariant Radon measure on $G/H$ obtained by a rho-function $\rho$, the following conditions are equivalent.
    \begin{enumerate}
        \item[(i)] $H$ is open.
        \item[(ii)] $\mu(\{H\}) >0$.
    \end{enumerate}
\end{proposition}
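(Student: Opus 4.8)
The plan is to read both implications off the quotient integral formula
$$\int_G f(x)\rho(x)\,d\mu_G(x)=\int_{G/H}\Big(\int_H f(xh)\,d\mu_H(h)\Big)\,d\mu(xH)\qquad(f\in C_c(G))$$
that defines $\mu$, together with the strong quasi-invariance $d\mu_x=\lambda(x,\cdot)\,d\mu$. Throughout I use that $G/H$ is a locally compact Hausdorff space (since $H$ is closed) and write $p_0=\{H\}\in G/H$ for the coset of the identity.

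For (i)$\implies$(ii), suppose $H$ is open. Then $G/H$ is discrete, so integration of a compactly supported function against the Radon measure $\mu$ is a finite weighted sum over points; moreover $\mu_G|_H$ is a nonzero left-invariant Radon measure on the open subgroup $H$, hence $\mu_G|_H=c\,\mu_H$ for some $c>0$ by uniqueness of Haar measure. Now choose $f\in C_c(G)$ with $f\ge 0$, $f\not\equiv 0$ and $\supp f\subseteq H$, which is possible precisely because $H$ is open. For such $f$ the inner integral $\int_H f(xh)\,d\mu_H(h)$ vanishes unless $x\in H$, i.e. unless $xH=p_0$, so the right-hand side of the formula collapses to $\mu(\{H\})\int_H f(h)\,d\mu_H(h)$, while the left-hand side equals $c\int_H f(x)\rho(x)\,d\mu_H(x)>0$ because $\rho>0$. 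Comparing the two sides forces $\mu(\{H\})>0$.

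For (ii)$\implies$(i), I would instead exploit strong quasi-invariance to propagate the atom at $p_0$. Evaluating $d\mu_x=\lambda(x,\cdot)\,d\mu$ at the point $p_0$ gives $\mu(\{xH\})=\mu_x(\{p_0\})=\lambda(x,p_0)\,\mu(\{H\})$ for every $x\in G$; since $\lambda>0$ and $\mu(\{H\})>0$, every point of $G/H$ is an atom. The function $x\mapsto\lambda(x,p_0)$ is continuous and constant on the fibres of the quotient map $G\to G/H$, hence descends to a continuous strictly positive function $\tilde\lambda$ on $G/H$ with $\mu(\{q\})=\tilde\lambda(q)\,\mu(\{H\})$ for all $q\in G/H$. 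Choosing a compact neighbourhood $V$ of $p_0$, continuity gives $\inf_V\tilde\lambda=\delta>0$, so each point of $V$ has $\mu$-mass at least $\delta\,\mu(\{H\})>0$; as $\mu(V)<\infty$ by the Radon property, $V$ must be finite, whence $p_0$ is isolated and $\{H\}$ is open in $G/H$. Pulling back along $G\to G/H$ shows $H$ is open.

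I expect the main obstacle to be the second implication: one must ensure the atomic masses are controlled not by the (possibly noncompact) fibre direction but by a genuine function on the base $G/H$. This is exactly what the fibrewise constancy of $\lambda(\cdot,p_0)$ provides, and it is what lets the local-finiteness argument force discreteness. The first implication is, by contrast, a routine specialization of the quotient integral formula to functions supported in the open subgroup $H$.
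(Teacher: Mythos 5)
Your proposal is correct and follows essentially the same route as the paper's proof: in both, the heart of (ii)$\implies$(i) is that strong quasi-invariance propagates the atom via $\mu(\{xH\})=\lambda(x,\{H\})\,\mu(\{H\})=\frac{\rho(x)}{\rho(e)}\mu(\{H\})$ (which also forces the density to be constant on cosets), and then a positive lower bound for this continuous density near the identity coset, combined with finiteness of the Radon measure on compacta, forces $\{H\}$ to be isolated and hence $H=q^{-1}(\{H\})$ open. The only cosmetic differences are that you verify (i)$\implies$(ii) directly from the quotient integral formula where the paper simply cites \cite[Proposition 2.60]{Folland}, and that you get the lower bound by descending $\lambda(\cdot,\{H\})$ to a continuous function on $G/H$ and taking its infimum on a compact neighbourhood, whereas the paper lifts the compact set to $G$ via \cite[Lemma 2.48]{Folland} and bounds $\rho$ from below there.
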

\begin{proof}
    
    $\text{(i)}\implies \text{(ii)}.$ Follows from \cite[Proposition 2.60]{Folland}.
   
    \par
    $\text{(ii)}\implies \text{(i)}.$ Note that $d\mu_x(yH)=\rho(xy)/\rho(y)d\mu(yH)$ for all $x,y\in G$ (see \cite[Theorem 2.58]{Folland}). Thus, we obtain 
    \begin{align*}
    \mu(\{xH\})=\mu_x(\{H\})&=\int_{G/H}1_{\{H\}}(yH)\,d\mu_x(yH)\\&=\int_{G/H}1_{\{H\}}(yH)\rho(xy)/\rho(y)\,d\mu(yH)=\frac{\rho(x)}{\rho(e)}\mu(\{H\})
    \end{align*}
    for all $x\in G$. Note that this equation implies that $\rho$ is constant on each coset $xH \subseteq G$. Let $U \subseteq G/H$ be an open subset and $K \subseteq G/H$ be a compact subset, with $H \in U \subseteq K$. By \cite[Lemma 2.48]{Folland}, there exists a compact subset $K_0 \subseteq G$ such that $q(K_0)=K$, where $q\colon G\to G/H$ is the canonical quotient map. Then, since $\rho$ is continuous and $0< \rho < \infty$, we find a constant $M >0$ which satisfies $\rho(x) \ge M$ for all $x \in K_0$. We see that $\rho(x) \ge M$ for all $x \in G$ such that $xH\in K$. In this setting, for each finite subset $F \subseteq U$, we have 
    $$\mu(F)=\sum_{xH\in F}\mu(\{xH\})=\sum_{xH\in F}\frac{\rho(x)}{\rho(e)}\mu(\{H\})\ge\#F\cdot \frac{M}{\rho(e)}\mu(\{H\}).$$
    Combining the inequation $\mu(F) \le \mu(K)$, we obtain $\#F\le\frac{\rho(e)\mu(K)}{M \mu(\{H\})} < \infty$. Since $F$ is arbitrary, we conclude that $U$ is a finite set. Let $U=\{x_1H=H,x_2H,\ldots,x_nH\}$. If $n=1$, then $\{H\}$ is open in $G/H$. If $n \ge2$, choose an open neighborhood $V$ of $H\in G/H$ such that $x_2H,\ldots,x_nH \notin V$. Then, $\{H\}=U\cap V$ is open in $G/H$. In either case, $G/H$ is discrete, and hence $H$ is open in $G$.
\end{proof}

\begin{lemma}\label{openimpmodcondi}
    For a locally compact group $G$ and an open subgroup $H$, $\Delta_G|H=\Delta_H$ holds.
\end{lemma}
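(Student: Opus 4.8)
The plan is to deduce the equality of modular functions from the classical fact that the restriction of a left Haar measure on $G$ to an open subgroup $H$ is, up to a positive scalar, a left Haar measure on $H$. The crucial topological input is that an open subgroup is automatically closed: its complement $G\setminus H=\bigcup_{gH\neq H}gH$ is a union of open cosets, so $H$ is \emph{clopen} in $G$. Consequently, extending a function $f\in C_c(H)$ by zero outside $H$ produces a genuinely continuous compactly supported $\tilde f\in C_c(G)$, and this extension-by-zero map is the device that lets me transfer integral identities between $H$ and $G$.

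First I would define the positive linear functional $I(f)=\int_G \tilde f\,d\mu_G$ on $C_c(H)$ and check it is left $H$-invariant. For $h\in H$ the key observation is that $\widetilde{L_h f}=L_h\tilde f$, where $L_h f(x)=f(h^{-1}x)$: both sides vanish off $H$ because $h^{-1}x\in H\iff x\in H$. Left-invariance of $\mu_G$ then gives $I(L_h f)=I(f)$. Since $H$ is a locally compact group in the subspace topology, the uniqueness of Haar measure yields a constant $c>0$ with $\mu_G|_H=c\,\mu_H$, i.e. $\int_G\tilde f\,d\mu_G=c\int_H f\,d\mu_H$ for all $f\in C_c(H)$.

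Next I would read off the modular functions. Fix $h\in H$ and apply the defining relation of $\Delta_G$ to $\tilde f$; using the identity $\widetilde{R_h f}=R_h\tilde f$ (with $R_h f(x)=f(xh)$, again both sides supported in $H$ because $xh\in H\iff x\in H$), the scalar identity $\mu_G|_H=c\,\mu_H$ rewrites both sides as integrals over $H$. Comparing with the defining relation of $\Delta_H$ produces $\Delta_H(h)\int_H f\,d\mu_H=\Delta_G(h)\int_H f\,d\mu_H$, and choosing $f$ with $\int_H f\,d\mu_H\neq 0$ forces $\Delta_H(h)=\Delta_G(h)$. As $h$ was arbitrary, $\Delta_G|H=\Delta_H$.

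There is also a shorter route reusing the machinery already in place: by Proposition \ref{openiffmuposi}, openness of $H$ gives $\mu(\{H\})>0$, and the computation in its proof shows the rho-function $\rho$ is constant on each coset, in particular on $H=eH$, so $\rho(h)=\rho(e)$ for all $h\in H$; combined with the defining relation $\rho(eh)=\frac{\Delta_H(h)}{\Delta_G(h)}\rho(e)$ this immediately gives $\Delta_H(h)/\Delta_G(h)=1$. The only genuine obstacle in either approach is bookkeeping, namely legitimizing the extension-by-zero (resp. coset) manipulations; both rest entirely on $H$ being clopen, and no analytic difficulty remains once that is in hand.
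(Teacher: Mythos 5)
Your proposal is correct, and your primary argument takes a genuinely different route from the paper's. You prove directly that the restriction of $\mu_G$ to the clopen subgroup $H$ is a left Haar measure on $H$ — extension by zero lands in $C_c(G)$ because $H$ is clopen, the functional $f\mapsto\int_G\tilde f\,d\mu_G$ is left $H$-invariant via $\widetilde{L_hf}=L_h\tilde f$, and uniqueness of Haar measure gives $\mu_G|_H=c\,\mu_H$ — and then you read off both modular functions from $\widetilde{R_hf}=R_h\tilde f$. This is elementary and self-contained, and it exhibits explicitly the proportionality constant (which, under the paper's normalization $\rho\equiv 1$, is exactly the $C=\mu(\{H\})$ used later in Proposition \ref{sfofrest} and Theorem \ref{restofplweight}). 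The paper instead proves the lemma in one line from the machinery it has just set up: openness gives $\mu(\{H\})>0$ by Proposition \ref{openiffmuposi}, the computation in that proposition's proof shows the rho-function is constant on each coset, and then $\rho(xh)=\frac{\Delta_H(h)}{\Delta_G(h)}\rho(x)$ forces $\Delta_G|H=\Delta_H$ — which is precisely the ``shorter route'' you sketch at the end, so you have in effect also reproduced the paper's own proof. The trade-off: your main route needs no rho-function or quasi-invariant measure theory and stands on its own, while the paper's route costs nothing extra given Proposition \ref{openiffmuposi} and keeps the whole section within one consistent framework.
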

\begin{proof}
    Take an arbitrary rho-function $\rho$. There is a strongly quasi-invariant measure $\mu$ obtained by $\rho$. By Proposition \ref{openiffmuposi}, $\mu(\{H\})>0$ holds. Moreover, by the proof of $\text{(ii)}\implies \text{(i)}$, it follows that $\rho$ is constant on each coset $xH\,\,(x\in G)$. Thus, for $x\in G, h\in H$, we have $\rho(x)=\rho(xh)=\frac{\Delta_H(h)}{\Delta_G(h)}\rho(x)$. Since $0 < \rho$, we conclude that $\Delta_H(h)=\Delta_G(h)$ for all $h \in H$.
\end{proof}

We provide a sufficient condition for the semifiniteness of the restriction of the Plancherel weight. Note that under the assumption $\Delta_G|H=\Delta_H$, the rho-function $\rho$ can be taken as $\rho \equiv 1$. In this case, the strongly quasi-invariant Radon measure $\mu$ is actually the $G$-invariant Radon measure. 
 
\begin{proposition}\label{sfofrest}
    Let $G$ be a locally compact group and $H$ be an open subgroup of $G$. Then $\varphi_G|L(H)$ is semifinite.
\end{proposition}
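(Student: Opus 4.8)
The plan is to verify semifiniteness directly from the definition. Write $N:=\{\lambda_\ell^G(h):h\in H\}''$ for the copy of $L(H)$ inside $L(G)$ furnished by Proposition \ref{subisom}. For $x\in N$ one has $(\varphi_G|_N)(x^*x)=\varphi_G(x^*x)$, so it suffices to exhibit a $\sigma$-weakly dense subset of $N$ contained in the left ideal $\mathfrak{n}_{\varphi_G}=\{x\in L(G):\varphi_G(x^*x)<\infty\}$. The natural candidates are the integrated forms supported on $H$. Since $H$ is open it is also closed, so every $f\in C_c(H)$ extends by zero to a function $\tilde f\in C_c(G)$ with $\supp\tilde f\subseteq H$; moreover $\mu_G|_H$ is a left Haar measure on $H$, and I normalize $\mu_H$ to equal it (this rescales $\varphi_H$ only by a constant and is irrelevant to semifiniteness). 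I then set $\mathcal{A}:=\{\pi_l^G(\tilde f):f\in C_c(H)\}$ and propose to show that $\mathcal{A}\subseteq N\cap\mathfrak{n}_{\varphi_G}$ and that $\mathcal{A}$ is $\sigma$-weakly dense in $N$.

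For the inclusion, first note that $\pi_l^G(\tilde f)=\int_H f(h)\,\lambda_\ell^G(h)\,d\mu_H(h)$ as a weak integral, so $\pi_l^G(\tilde f)\in N$ because $N$ is $\sigma$-weakly closed. Next, by Lemma \ref{sumprodadj} we have $\pi_l^G(\tilde f)^*\pi_l^G(\tilde f)=\pi_l^G(\tilde f^{\#}*\tilde f)=\pi_l(\tilde f)^*\pi_l(\tilde f)$, which is exactly of the form appearing in Definition \ref{defofplweight} with the left-bounded vector $\tilde f\in C_c(G)$; hence $\varphi_G\bigl(\pi_l^G(\tilde f)^*\pi_l^G(\tilde f)\bigr)=\|\tilde f\|_2^2<\infty$, so $\pi_l^G(\tilde f)\in\mathfrak{n}_{\varphi_G}$.

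For density, observe that $\mathcal{A}$ is a $*$-subalgebra of $N$: it is closed under adjoints since $\pi_l^G(\tilde f)^*=\pi_l^G(\tilde f^{\#})$ with $\supp\tilde f^{\#}\subseteq H$, and closed under products since $\pi_l^G(\tilde f)\pi_l^G(\tilde g)=\pi_l^G(\tilde f*\tilde g)$ with $\supp(\tilde f*\tilde g)\subseteq H\cdot H=H$. Choosing an approximate identity $(u_i)\subseteq C_c(H)$ for $\mu_H$, one has $\pi_l^G(\widetilde{u_i})\to 1$ strongly (the norms are bounded by $\|u_i\|_1$ and convergence holds on the dense set $C_c(G)$ by continuity of translation in $L^2(G)$), and since $\lambda_\ell^G(h)\pi_l^G(\widetilde{u_i})=\pi_l^G(\widetilde{L_hu_i})\in\mathcal{A}$ for every $h\in H$, each generator $\lambda_\ell^G(h)$ lies in the strong closure of $\mathcal{A}$. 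Therefore the $\sigma$-weak closure of the $*$-algebra $\mathcal{A}$ is a von Neumann algebra containing all the $\lambda_\ell^G(h)$, hence equals $N$. Combining the three points gives that $\varphi_G|_N$ is semifinite.

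The only place where openness of $H$ is essential is the first paragraph: it is what makes the extension-by-zero map $C_c(H)\hookrightarrow C_c(G)$ available and makes $\mu_G|_H$ a Haar measure on $H$, so that the integrated forms simultaneously live in $N$ and in the domain of Definition \ref{defofplweight}. I expect no serious obstacle beyond this; the weight computation is immediate from Lemma \ref{sumprodadj} and the definition of $\varphi_G$, and the strong convergence of the approximate identity is routine. Note that Lemma \ref{openimpmodcondi} (the equality $\Delta_G|_H=\Delta_H$) is not needed for bare semifiniteness; it becomes relevant only when one wants to identify the restriction with a constant multiple of $\varphi_H$.
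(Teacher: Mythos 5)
Your proof is correct, and it shares the paper's core construction: both arguments work with the same $*$-algebra of integrated forms supported in $H$ (your $\mathcal{A}$ is exactly the paper's $\pi_l(C_c(G)_H)$, via the extension-by-zero bijection), both place it inside $N=\mathcal{N}_H''$ through the weak integral $\int_H f(h)\lambda_\ell^G(h)\,d\mu_H(h)$, and both obtain membership in $\mathfrak{n}_{\varphi_G}$ from Lemma \ref{sumprodadj} and Definition \ref{defofplweight}. Where you genuinely diverge is the density step. The paper transfers density through the normal isomorphism $\Phi$ of Proposition \ref{subisom}: it first computes $\Phi(\pi_l(f))=\pi_l(C^{-1}\tilde f)$ for $f\in C_c(H)$, where $C=\mu(\{H\})$, and then concludes $\pi_l(C_c(G)_H)''=\Phi(\pi_l(C_c(H)))''=\Phi(\pi_l(C_c(H))'')=\Phi(L(H))=\mathcal{N}_H''$. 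You instead prove density intrinsically inside $\B(L^2(G))$: $\mathcal{A}$ is a $*$-algebra, an approximate identity $(u_i)\subseteq C_c(H)$ gives $\pi_l(\widetilde{u_i})\to 1$ strongly (uniform $L^1$-bounds plus convergence on the dense set $C_c(G)$, using that $H$ open makes small neighborhoods in $H$ neighborhoods in $G$), and the identity $\lambda_\ell^G(h)\pi_l(\widetilde{u_i})=\pi_l(\widetilde{L_hu_i})\in\mathcal{A}$ puts every generator $\lambda_\ell^G(h)$ in the strong closure, so the $\sigma$-weak closure of $\mathcal{A}$ is a von Neumann algebra (it contains $1$) equal to $N$. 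Your route is more elementary and self-contained: it never needs the explicit intertwiner $U$ of Proposition \ref{subisom} or the constant $C$, and, as you correctly observe, it isolates the fact that $\Delta_G|_H=\Delta_H$ (Lemma \ref{openimpmodcondi}) plays no role in bare semifiniteness. What the paper's heavier route buys is reuse: the formula $\Phi(\pi_l(f))=\pi_l(C^{-1}\tilde f)$ established inside its proof is precisely what Theorem \ref{restofplweight} later needs to identify $\varphi_G|L(H)$ with $C^{-1}\varphi_H$, so the paper pays the intertwiner computation once and collects on it twice, whereas your argument would have to be supplemented by that computation to recover the sharper statement.
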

\begin{proof}
     Note that $C_c(G)_H=\{f\in C_c(G) : {\rm supp}\,f \subseteq H\} \ne \{0\}$ since $H$ is open. Let $\mu$ be the $G$-invariant Radon measure on $G/H$ obtained by the rho-function $\rho \equiv 1$. Also, we set $C=\mu(\{H\})$ and $\mathcal{N}_H=\{\lambda_h:h\in H\}$. It follows that $C>0$ by Proposition \ref{openiffmuposi}. Since $L(H) \cong \mathcal{N}_H''$, we shall show that $\varphi_G|\mathcal{N}_H''$ is semifinite. We now prove $\Phi(\pi_l(C_c(H)))=\pi_l(C_c(G)_H)$ where $\Phi$ is the isomorphism as in Proposition \ref{subisom}. Note that $\int_G f(x)\, d\mu_G(x) = \mu(\{H\})\int_H f(x)\, d\mu_H(x)$ for all $f\in C_c(G)_H$. Take $f \in C_c(H)$. Fix $\xi \in C_c(G), x\in G$, then
    \begin{align*}
        \Phi(\pi_l(f))(\xi)(x) &= (U^*\widehat{\pi_l(f)}U\xi)(x) \\ &= \Delta_G(x)^{-1/2}(\widehat{\pi_l(f)}U\xi)(x^{-1})(e) \\ &= \Delta_G(x)^{-1/2}(\pi_l(f)(U\xi)(x^{-1}))(e) \\ &= \Delta_G(x)^{-1/2}(f*(U\xi)(x^{-1}))(e) \\ &= \Delta_G(x)^{-1/2}\int_H f(y)(U\xi)(x^{-1})(y^{-1})\, d\mu_H(y) \\ &=\Delta_G(x)^{-1/2} \int_H f(y)\Delta_G(x)^{1/2} \xi(y^{-1}x) \, d\mu_H(y) \\ &= \int_H f(y) \xi(y^{-1}x) \, d\mu_H(y).
    \end{align*}
    If we denote $\tilde{f} \in C_c(G)_H$ for the zero extension of $f$, then we obtain
    \begin{align*}
        \Phi(\pi_l(f))(\xi)(x)=\int_H f(y) \xi(y^{-1}x) \, d\mu_H(y) &= \int_H\tilde{f}(y)  \xi(y^{-1}x) \, d\mu_H(y) \\ &= C^{-1}\int_G \tilde{f}(y)\xi(y^{-1}x) \, d\mu_G(y) \\&=(\pi_l(C^{-1}\tilde{f})\xi)(x).
    \end{align*}
    This shows that $\Phi(\pi_l(f))=\pi_l(C^{-1}\tilde{f})$. It is clear that $C_c(H)\ni f \mapsto C^{-1}\tilde{f}\in C_c(G)_H$ is bijective. Therefore, we conclude that $\Phi(\pi_l(C_c(H)))=\pi_l(C_c(G)_H)$. 
    
    Next, we show $\pi_l(C_c(G)_H) \subseteq \mathfrak{n}_{\varphi_G|\mathcal{N}_H''}=\mathfrak{n}_{\varphi_G}\cap \mathcal{N}_H''$. It is clear that $\pi_l(C_c(G)_H) \subseteq \mathfrak{n}_{\varphi_G}$. Take $f\in C_c(G)_H$. For each $T\in \mathcal{N}_H'$, we have 
    \begin{align*}
        T\pi_l(f) &= T\int_G f(x)\lambda_x\, d\mu_G(x) = T\int_H f(x)\lambda_x\, d\mu_G(x) \\ &= \int_H f(x)T\lambda_x\, d\mu_G(x) = \int_H f(x)\lambda_xT\, d\mu_G(x) = \pi_l(f)T.
    \end{align*}
    Thus, $\pi_l(f)\in \mathcal{N}_H''$. To show the semifiniteness of $\varphi_G|\mathcal{N}_H''$, it suffices to show that $\pi_l(C_c(G)_H)''=\mathcal{N}_H''$. Since $\Phi$ is an isomorphism between von Neumann algebras $L(H)$ and $\mathcal{N}_H''$, and hence is normal, we have  
    \begin{align*}
        \pi_l(C_c(G)_H)''&= \Phi(\pi_l(C_c(H)))''=\Phi(\pi_l(C_c(H))'')=\Phi(L(H))\\&=\Phi(\{\lambda_\ell^H(h):h\in H\}'')=\Phi(\{\lambda_\ell^H(h):h\in H\})''=\mathcal{N}_H''.
    \end{align*}
\end{proof}

Now, we provide a necessary condition for the semifiniteness of the restriction of the Plancherel weight. We recall the support of an element of the group von Neumann algebra. We set $$A(G)=\{\xi*\eta^{\flat}:\xi,\eta\in L^2(G)\}.$$ 
$A(G)$ is called the Fourier algebra of $G$. It is well known that $A(G)$ is a dense $*$-subalgebra of the abelian $C^*$-algebra $C_0(G)$ of all continuous functions on $G$ vanishing at infinity, with $\|\cdot\|_\infty$-norm (see, e.g. \cite[Chapter VII, Lemma 3.7]{TakesakiOA2}). As a Banach space, $A(G)$ is isomorphic to the predual $L(G)_*$ of $L(G)$. The duality pairing is given by $\nai{x}{\phi}=\nai{x\xi}{\eta}_2$ for all $x\in L(G),\phi=\xi*\eta^{\flat} \in A(G)$. We see that $\nai{\lambda_x}{\phi}=\phi(x)\,\, (x\in G)$. Now, we define the multipliers on $L(G)$. For $\phi \in A(G)$, a normal completely bounded map $m_\phi\colon  L(G) \to L(G)$ is defined by the formula $$\nai{m_\phi(x)}{\psi}=\nai{x}{\psi\phi}\,\,(x\in L(G),\psi \in A(G)).$$ In this setting, $m_\phi(\lambda_x)=\phi(x)\lambda_x\,\,(x\in G)$ holds. 

We define the support of $x\in L(G)$. For $x\in L(G)$, set
$${\rm supp}(x)=\{g\in G: m_\phi(x)\ne0\,\, \text{for all}\,\,\phi\in A(G)\,\, \text{such that}\,\, \phi(g) \ne 0 \}.$$ It is known that the support is closed in $G$. We refer to \cite[Chapter 4]{Eymard1964}, \cite{TakesakiTatsuuma1972} and \cite[Section 2.6, Section 3.1]{BoutonnetBrothier2016} for details.
We provide the following lemmas. In fact, they were shown in \cite[Proposition 3.2, Theorem 5.2]{BoutonnetBrothier2016}. But, we give an outline of the proof in the simple case for completeness.

\begin{lemma}[{\cite[Proposition 3.2]{BoutonnetBrothier2016}}]\label{suppinc1}
    Let $G$ be a locally compact group, $x\in L(G)$ and $g \in G$. If $1_{gU}x1_U \ne 0$ for any open neighborhood $U$ of $e$, then $g \in {\rm supp}(x)$ holds.
\end{lemma}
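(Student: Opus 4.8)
The plan is to prove the contrapositive: assuming $g\notin\supp(x)$, I will produce a single open neighbourhood $U$ of $e$ with $1_{gU}x1_U=0$, contradicting the hypothesis. By the definition of the support, $g\notin\supp(x)$ means there is some $\phi_0\in A(G)$ with $\phi_0(g)\ne 0$ and $m_{\phi_0}(x)=0$. The engine is the fundamental unitary $W\in\B(L^2(G)\otimes L^2(G))=\B(L^2(G\times G))$ given by $(W\zeta)(s,t)=\zeta(s,st)$, which satisfies $W^*(\lambda_y\otimes 1)W=\lambda_y\otimes\lambda_y$. Writing a given $\phi=\phi_{a,b}$ with $\phi_{a,b}(y)=\langle\lambda_y a,b\rangle_2$ ($a,b\in L^2(G)$), and unwinding the definition $\langle m_\phi(x),\chi\rangle=\langle x,\chi\phi\rangle$ through the identification $A(G)=L(G)_*$, one obtains, with the vector functional $\omega_{a,b}(\cdot)=\langle(\cdot)a,b\rangle_2$ and $(R_tf)(s)=f(st)$,
$$ m_\phi(x)=(\id\otimes\omega_{a,b})\big(W^*(x\otimes 1)W\big),\qquad \langle m_\phi(x)\xi,\eta\rangle_2=\int_G\langle x(\xi\cdot R_t a),\,\eta\cdot R_t b\rangle_2\,dt. $$

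Two operator identities drive the argument. First, a direct change of variables shows that every first–leg multiplication operator $M_f\otimes 1$ commutes with $W$; conjugating $1_{gU}\otimes 1$ and $1_U\otimes 1$ through $W^*(x\otimes 1)W$ therefore gives, for \emph{any} $\phi=\phi_{a,b}$,
$$ 1_{gU}\,m_\phi(x)\,1_U=(\id\otimes\omega_{a,b})\big(W^*((1_{gU}x1_U)\otimes 1)W\big)=:\tilde m_{a,b}(1_{gU}x1_U), $$
where $\tilde m_{a,b}(z):=(\id\otimes\omega_{a,b})(W^*(z\otimes 1)W)$ is the normal completely bounded extension of $m_\phi$ to $\B(L^2(G))$. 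Second, I will establish the \emph{localization identity}: if $\phi\equiv 1$ on $gUU^{-1}$, then $\tilde m_{a,b}(1_{gU}x1_U)=1_{gU}x1_U$. The crucial input is the elementary Fubini computation $\int_G a(rt)\overline{b(st)}\,dt=\phi(sr^{-1})$; substituting it into the integral formula above, and using the honest convolution kernel $K(s,r)=f(sr^{-1})\Delta_G(r)^{-1}$ of $x=\pi_l(f)=\lambda(f)$ for $f\in C_c(G)$, replaces the factor $\phi(sr^{-1})$ by $1$ exactly when $r\in U$ and $s\in gU$, since then $sr^{-1}\in gUU^{-1}$. As both $x\mapsto 1_{gU}x1_U$ and $x\mapsto\tilde m_{a,b}(1_{gU}x1_U)$ are $\sigma$-weakly continuous, the identity passes from the $\sigma$-weakly dense set $\{\lambda(f):f\in C_c(G)\}$ to all of $L(G)$.

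It then remains to manufacture one $\phi$ that is simultaneously annihilating and locally constant near $g$. From $m_\phi(\lambda_y)=\phi(y)\lambda_y$ one reads off multiplicativity $m_{\chi\phi_0}=m_\chi\circ m_{\phi_0}$, whence $m_{\chi\phi_0}(x)=0$ for every $\chi\in A(G)$. Since $A(G)$ is a regular Tauberian algebra of functions and $\phi_0(g)\ne 0$, there is $\chi\in A(G)$ with $\chi\phi_0\equiv 1$ on a neighbourhood $N$ of $g$; set $\phi:=\chi\phi_0$, so that $m_\phi(x)=0$ and $\phi\equiv 1$ on $N$, and write $\phi=\phi_{a,b}$. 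By continuity of the group operations the set $gUU^{-1}$ can be made to lie in $N$ by taking $U$ small, so fix such a $U$. Combining the two identities,
$$ 0=1_{gU}\,m_\phi(x)\,1_U=\tilde m_{a,b}(1_{gU}x1_U)=1_{gU}x1_U, $$
contradicting the assumption that $1_{gU}x1_U\ne 0$ for every neighbourhood $U$ of $e$. Hence $g\in\supp(x)$.

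The steps I expect to be most delicate are the two background inputs rather than the bookkeeping. The first is the passage from $x=\lambda(f)$ to general $x\in L(G)$ in the localization identity: this I will justify by the normality of $\tilde m_{a,b}$ together with the Fubini/kernel computation, which is only literally valid for $f\in C_c(G)$ (the modular factor $\Delta_G(r)^{-1}$ is harmless, as it rides identically on both sides and does not affect where the integrand vanishes). The second is the local invertibility of $\phi_0$ in $A(G)$, namely the existence of $\chi\in A(G)$ with $\chi\phi_0\equiv 1$ near $g$, which rests on the regularity of the Fourier algebra. Everything else — the commutation of $M_f\otimes 1$ with $W$ and the identity $\int_G a(rt)\overline{b(st)}\,dt=\phi(sr^{-1})$ — is a routine change of variables.
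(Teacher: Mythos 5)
Your proof is correct, and its logical skeleton coincides with the paper's: both arguments rest on (a) regularity/local invertibility of the Fourier algebra $A(G)$ (the Loomis/Kaniuth results the paper cites) to produce a multiplier that is identically $1$ on a neighbourhood of $g$ while retaining control of $m_\phi(x)$, (b) a localization identity stating that if $\phi\equiv 1$ on $gUU^{-1}$ then $1_{gU}\,m_\phi(y)\,1_U=1_{gU}\,y\,1_U$ for all $y\in L(G)$, and (c) normality plus $\sigma$-weak density to extend (b) from a generating family. The differences are in the packaging. You argue the contrapositive and work with the principal ideal (finding $\chi$ with $\chi\phi_0\equiv 1$ near $g$ and using $m_{\chi\phi_0}=m_\chi\circ m_{\phi_0}$), which lets you bypass the paper's small detour through the closed ideal $\mathcal{I}(\phi)$ and the identification of its hull with the zero set of $\phi$; both variants are covered by the same cited regularity theorems. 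On the other hand, your verification of the localization identity --- via the multiplicative unitary $W$, the slice-map formula $m_\phi=(\id\otimes\omega_{a,b})\bigl(W^*(\,\cdot\,\otimes 1)W\bigr)$, and the integral kernel $f(sr^{-1})\Delta_G(r)^{-1}$ of $\lambda(f)$ for $f\in C_c(G)$ --- is considerably heavier than necessary: the paper checks the very same identity in two lines by testing on the unitaries $\lambda_h$ themselves (if $h\in V$ then $\psi(h)=1$; if $h\notin V$ then $1_{gU}\lambda_h 1_U=0$, since $s\in gU$ and $h^{-1}s\in U$ would force $h\in gUU^{-1}\subseteq V$), and then extends by normality exactly as you do, but over the dense span of $\{\lambda_h\}$ rather than $\lambda(C_c(G))$. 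Testing on $\lambda_h$ instead of $\lambda(f)$ eliminates all the Fubini/kernel bookkeeping and the modular-function issue you flag as delicate; what your route buys in exchange is an explicit normal completely bounded extension $\tilde m_{a,b}$ of $m_\phi$ to all of $\B(L^2(G))$, which is a nice structural remark but is not needed for the lemma.
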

\begin{proof}
    Let $\phi \in A(G)$ such that $\phi(g) \ne 0$. We shall show that $m_\phi(x) \ne 0$. Let $\mathcal{I}(\phi)$ be the closed ideal generated by $\phi$. Since $\psi_\lambda \to \psi$ in $A(G)$ implies $\nai{y}{\psi_\lambda-\psi} \to 0\,\,(y\in L(G))$, if $\psi_\lambda \to \psi$ in $A(G)$, then $m_{\psi_\lambda}(y) \to m_\psi(y)$ $\sigma$-weakly for all $y\in L(G)$ holds. Note that $\mathcal{I}(\phi)$ is the closure of the set ${\{\phi\psi:\psi\in A(G)\}}$ and $m_\phi(x)=0$ implies $m_{\phi\psi}(x)=0$ for all $\psi \in A(G)$. We conclude that if $m_\phi(x)=0$, then we have $m_\psi(x)=0$ for all $\psi \in \mathcal{I}(\phi)$. Therefore, to show $m_\phi(x) \ne 0$, it suffices to find $\psi \in \mathcal{I}(\phi)$ such that $m_\psi(x) \ne 0$. Set $$F=\{g\in G: \phi(g) =0\},\,\,h(\mathcal{I}(\phi))=\{g\in G: \psi(g)=0\,\,\text{for all}\,\,\psi \in \mathcal{I}(\phi)\}.$$ In fact, since the spectrum of the commutative Banach algebra $A(G)$ is known to be $G$, $h(\mathcal{I}(\phi))$ is the hull of $\mathcal{I}(\phi)$ in the sense of \cite[Definition 4.1.1]{Kaniuth2009}. Moreover, it can be shown that $F= h(\mathcal{I}(\phi))$. Take some compact neighborhood $V$ of $g$ such that $\phi \ne 0$ on $V$. Noting that the identification map between the spectrum of $A(G)$ and $G$ is the identity map (see \cite[Theorem 2.9.4]{Kaniuth2009}), by \cite[Theorem 4.2.8]{Kaniuth2009} (or \cite[Lemma 25C]{Loomis1953}), there exists $\psi \in \mathcal{I}(\phi)$ which satisfies $\psi =1$ on $V$. Take an open neighborhood $U$ of $e$ such that $gU\cdot U^{-1} \subseteq V$. We shall show that $$1_{gU}m_\psi(y)1_U=1_{gU}y 1_U\,\,(y\in L(G)).$$ If $h\in V$, then since $m_\psi(\lambda_h)=\psi(h)\lambda_h=\lambda_h$, the equation is obvious. Let $h \notin V$. For $\xi \in C_c(G)$ and $s\in G$, we have $$(1_{gU}m_\psi(\lambda_h)1_U\xi)(s)=\psi(h)1_{gU}(s)1_U(h^{-1}s)\xi(h^{-1}s)=\psi(h)(1_{gU}\lambda_h1_U\xi)(s).$$ By construction, the value of the above equation is zero. Therefore, we see that $1_{gU}m_\psi(y)1_U=1_{gU}y 1_U$ for all $y\in L(G)$ since $\{\lambda_h:h\in G\}''=L(G)$. We conclude that $1_{gU}m_\psi(x)1_U=1_{gU}x 1_U\ne 0$ and hence $m_\psi(x)\ne 0$.
\end{proof} 

\begin{lemma}[{\cite[Theorem 5.2]{BoutonnetBrothier2016}}]\label{sptinc}
    Let $f$ be a continuous, left bounded function in $L^2(G)$. Then the inclusion ${\rm supp}(f)\subseteq {\rm supp}(\pi_l(f))$ holds.
\end{lemma}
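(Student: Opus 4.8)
The plan is to reduce the statement to the criterion of Lemma \ref{suppinc1} and then verify its hypothesis by a concentration (approximate identity) argument. Fix $g\in {\rm supp}(f)$; by Lemma \ref{suppinc1} applied to $x=\pi_l(f)$ it suffices to prove that $1_{gU}\,\pi_l(f)\,1_U\ne 0$ for \emph{every} open neighbourhood $U$ of $e$. So fix such a $U$. Since ${\rm supp}(f)=\overline{\{x:f(x)\ne 0\}}$ and $gU$ is an open neighbourhood of $g$, the set $gU\cap\{f\ne 0\}$ is non-empty; I choose $g_0\in gU$ with $f(g_0)\ne 0$. The whole point is that $\pi_l(f)$ is a convolution operator, so feeding it a vector concentrated at $e$ reproduces the values of $f$ itself near $g_0$.

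Concretely, I would take an approximate identity $(\xi_i)_i\subseteq C_c(G)$ at $e$: $\xi_i\ge 0$, $\int_G\xi_i\,d\mu_G=1$, with supports shrinking to $\{e\}$ and eventually contained in $U$. For such $i$ we have $1_U\xi_i=\xi_i$, and, by the very definition of left boundedness, $\pi_l(f)\xi_i=f*\xi_i$. Hence $1_{gU}\,\pi_l(f)\,1_U\,\xi_i=1_{gU}\,(f*\xi_i)$, and it remains to see that this is a non-zero element of $L^2(G)$ for $i$ large.

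The main step is the pointwise convergence $f*\xi_i\to f$ near $g_0$. Evaluating at $g_0$,
\begin{align*}
(f*\xi_i)(g_0)=\int_G f(s)\,\xi_i(s^{-1}g_0)\,d\mu_G(s),
\end{align*}
and the substitution $s\mapsto w=s^{-1}$ followed by $u=wg_0$ gives $\int_G\xi_i(s^{-1}g_0)\,d\mu_G(s)=\int_G\xi_i(u)\,\Delta_G(u)^{-1}\,d\mu_G(u)$, which tends to $1$ as the supports shrink to $e$ (where $\Delta_G=1$). Since $f$ is continuous and the mass of $\xi_i(s^{-1}g_0)\,d\mu_G(s)$ concentrates at $s=g_0$, writing $(f*\xi_i)(g_0)-f(g_0)\int_G\xi_i(s^{-1}g_0)\,d\mu_G(s)=\int_G(f(s)-f(g_0))\,\xi_i(s^{-1}g_0)\,d\mu_G(s)$ and bounding $|f(s)-f(g_0)|$ on the shrinking support yields $(f*\xi_i)(g_0)\to f(g_0)\ne 0$. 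So for $i$ large $(f*\xi_i)(g_0)\ne 0$; as $f*\xi_i$ is continuous and $g_0\in gU$ with $gU$ open, $f*\xi_i$ is non-zero on a neighbourhood of $g_0$ inside $gU$, whence $1_{gU}(f*\xi_i)\ne 0$ in $L^2(G)$. This gives $1_{gU}\,\pi_l(f)\,1_U\ne 0$, and Lemma \ref{suppinc1} yields $g\in {\rm supp}(\pi_l(f))$; since $g\in{\rm supp}(f)$ was arbitrary, the claimed inclusion follows.

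The only delicate point I anticipate is the modular-function bookkeeping: because the convolution sits on the \emph{right} of $f$, the correct normalising mass of $\xi_i(s^{-1}g_0)\,d\mu_G(s)$ is $\int_G\xi_i(u)\Delta_G(u)^{-1}\,d\mu_G(u)$ rather than $\int_G\xi_i\,d\mu_G$. Since $\Delta_G(e)=1$ this converges to $1$ and causes no harm; alternatively one can normalise the $\xi_i$ by this quantity from the start. Everything else is the standard approximate-identity fact for continuous functions together with the definition of the support of an element of $L(G)$.
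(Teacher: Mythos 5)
Your proof is correct, and its skeleton matches the paper's: both reduce to Lemma \ref{suppinc1} and verify its hypothesis by feeding $\pi_l(f)$ approximate-identity vectors supported in $U$. The difference is in how the non-vanishing of $1_{gU}\pi_l(f)\xi$ is established. The paper argues in $L^2$-norm: it sets $C=\|1_{gU}f\|_2>0$ (which, like your choice of $g_0$, uses continuity of $f$ and $g\in{\rm supp}(f)$), proves $\|f*\check\xi-f\|_2<C/2$ via Minkowski's inequality for integrals and the strong $L^2$-continuity of $s\mapsto \Delta_G(s)^{-1/2}\rho_s f$, and concludes by the reverse triangle inequality that $\|1_{gU}(f*\check\xi)\|_2>C/2>0$. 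You instead argue pointwise: $(f*\xi_i)(g_0)\to f(g_0)\ne 0$ at a single point $g_0\in gU$ with $f(g_0)\neq 0$, using continuity of $f$ at $g_0$ together with the correctly computed modular normalization $\int_G\xi_i(s^{-1}g_0)\,d\mu_G(s)=\int_G \xi_i(u)\Delta_G(u)^{-1}\,d\mu_G(u)\to 1$; continuity of $f*\xi_i$ then yields a non-empty open subset of $gU$ on which it is non-zero. Your route is somewhat more elementary, avoiding the $L^2$-translation-continuity estimate entirely, while the paper's gives quantitative norm control; both are complete. Two points worth making explicit if you write this up: first, $\pi_l(f)\xi_i$ is a priori only an $L^2$-class, but since $\xi_i\in C_c(G)$ it is represented by the continuous function $x\mapsto\int_G f(s)\xi_i(s^{-1}x)\,d\mu_G(s)$, so pointwise evaluation is legitimate; second, non-empty open sets have positive Haar measure, which is what turns non-vanishing of the continuous function $f*\xi_i$ near $g_0$ into $1_{gU}(f*\xi_i)\ne 0$ as an element of $L^2(G)$.
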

\begin{proof}
    Let $g\in {\rm supp}(f)$. By Lemma \ref{suppinc1}, we shall show that $1_{gU}\pi_l(f)1_U \ne 0$ for any open neighborhood $U$ of $e$. Fix an open neighborhood $U$ of $e$. For $\xi \in C_c(G)_U=\{f\in C_c(G) : {\rm supp}\,f \subseteq U\}$, it follows that $1_{gU}\pi_l(f)1_U \xi=1_{gU}\pi_l(f)\xi=1_{gU}\pi_r(\xi)f$. Since $f$ is continuous, $gU \cap \{h\in G: f(h)\ne 0\}$ is a non-empty open set. Thus, $1_{gU}f\ne0$ holds. Put $C=\|1_{gU}f\|_2>0$. For $\xi \in C_c(G)$, we denote $\check{\xi}$ for the function $h\mapsto \xi(h^{-1})$. Let $\xi \in C_c(G)_+$ such that $\int_G \xi \,d\mu_G =1$. We calculate
    \begin{align*}
        \|\pi_r(\check{\xi})f-f\|_2^2&=\int_G|\pi_r(\check{\xi})f(h)-f(h)|^2\,dh
        \\&= \int_G|\int_G\xi(s)(f(hs)-f(h))\,ds|^2dh
        \\&\le \int_G(\int_G\xi(s)|f(hs)-f(h)|\,ds)^2dh
        \\&= \int_{G \times G \times G}\xi(s)\xi(t)|f(hs)-f(h)||f(ht)-f(h)|\,ds\,dt\,dh
        \\&= \int_{G \times G \times G}\xi(s)\xi(t)|\Delta_G(s)^{-1/2}\rho_sf(h)-f(h)||\Delta_G(t)^{-1/2}\rho_tf(h)-f(h)|\,ds\,dt\,dh
        \\&\le \int_{G\times G} \xi(s)\xi(t)\|\Delta_G(s)^{-1/2}\rho_sf-f\|_2\|\Delta_G(t)^{-1/2}\rho_tf-f\|_2\,ds\,dt 
        \\&= (\int_G \xi(s)\|\Delta_G(s)^{-1/2}\rho_sf-f\|_2\,ds)^2.
    \end{align*}
    Since the map $s \mapsto \|\Delta_G(s)^{-1/2}\rho_sf-f\|_2$ is continuous and the value at $e$ is zero, if we take the support of $\xi$ sufficiently small, then $\|\pi_r(\check{\xi})f-f\|_2 \le \int_G \xi(s)\|\Delta_G(s)^{-1/2}\rho_sf-f\|_2\,ds < \frac{C}{2}$ holds. Noting that $$\|1_{gU}f\|_2-\|1_{gU}\pi_r(\check{\xi})f\|_2 \le\|1_{gU}f-1_{gU}\pi_r(\check{\xi})f\|_2 \le \|f-\pi_r(\check{\xi})f\|_2,$$ we conclude that $$\|1_{gU}\pi_r(\check{\xi})f\|_2\ge \|1_{gU}f\|_2- \|f-\pi_r(\check{\xi})f\|_2 >\frac{C}{2}>0$$ for some $\xi \in C_c(G)_U$.
    That is, $1_{gU}\pi_l(f)1_U \check{\xi}=1_{gU}\pi_r(\check{\xi})f\ne0$ and hence $1_{gU}\pi_l(f)1_U \ne0$ holds.
\end{proof}

\begin{proposition}\label{sfimpopen}
    For a locally compact group $G$ and a closed subgroup $H$ of $G$, if $\varphi_G|L(H)$ is semifinite, then $H$ is open.
\end{proposition}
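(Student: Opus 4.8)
The plan is to prove the contrapositive in disguise: assuming semifiniteness I will manufacture a nonzero $L^2$-function whose essential support lies in $H$, and then invoke that a non-open closed subgroup is Haar-null. First, since $\varphi_G|\mathcal{N}_H''$ is semifinite and $\mathcal{N}_H''=\{\lambda_h:h\in H\}''\cong L(H)$ is nonzero, the left ideal $\mathfrak{n}_{\varphi_G}\cap\mathcal{N}_H''$ is $\sigma$-weakly dense in $\mathcal{N}_H''$, hence contains a nonzero element $x$. By the definition of $\varphi_G$, such an $x$ is of the form $x=\pi_l(\xi)$ for some left bounded $\xi\in L^2(G)$, and $x\neq 0$ forces $\xi\neq0$.

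The key geometric input is that membership in $\mathcal{N}_H''$ forces $\supp(x)\subseteq H$. I would establish this as follows: given $g\notin H$, since $H$ is closed and $A(G)$ is a regular function algebra on $G$ \cite{Eymard1964}, choose $\phi\in A(G)$ with $\phi(g)\neq0$ and $\phi|_H=0$. Then $m_\phi(\lambda_h)=\phi(h)\lambda_h=0$ for every $h\in H$, and since $m_\phi$ is normal while $\mathcal{N}_H''$ is the $\sigma$-weakly closed linear span of $\{\lambda_h:h\in H\}$ (products collapse via $\lambda_h\lambda_{h'}=\lambda_{hh'}$), linearity and normality give $m_\phi(x)=0$. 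Thus $g\notin\supp(x)$, and as $g$ was arbitrary, $\supp(x)\subseteq H$.

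The main obstacle is that $\xi$ need not be continuous, so Lemma \ref{sptinc} cannot be applied to $\xi$ directly; I resolve this by regularization. Fix an approximate identity $(u_\varepsilon)\subseteq C_c(G)_+$ with $\int u_\varepsilon\,d\mu_G=1$ and shrinking supports $V_\varepsilon:=\supp(u_\varepsilon)\downarrow\{e\}$, and set $\eta_\varepsilon:=u_\varepsilon*\xi$. Each $\eta_\varepsilon$ is continuous, lies in $L^2(G)$, and is left bounded with $\pi_l(\eta_\varepsilon)=\pi_l(u_\varepsilon)x$. Using $\supp(\pi_l(u_\varepsilon))\subseteq \supp(u_\varepsilon)$ (immediate from $m_\phi(\pi_l(f))=\pi_l(\phi f)$) together with the product rule $\supp(ab)\subseteq\overline{\supp(a)\supp(b)}$ for the operator support \cite{BoutonnetBrothier2016}, and the containment $\supp(x)\subseteq H$ above, I obtain $\supp(\pi_l(\eta_\varepsilon))\subseteq V_\varepsilon H$, which is closed as the product of a compact and a closed set. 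Now Lemma \ref{sptinc} applies to the continuous left bounded $\eta_\varepsilon$ and yields that, as a function, $\eta_\varepsilon$ is supported in $V_\varepsilon H$.

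Finally I would pass to the limit. Since $V_\varepsilon\downarrow\{e\}$ and $H$ is closed, a short compactness argument shows that for every compact $K\subseteq G\setminus H$ one has $V_\varepsilon H\cap K=\emptyset$ for all sufficiently small $\varepsilon$, so $\eta_\varepsilon|_K=0$; because $\eta_\varepsilon\to\xi$ in $L^2(G)$ this forces $\xi|_K=0$ a.e., and by inner regularity of Haar measure $\xi=0$ a.e. on $G\setminus H$. Hence $\xi$ is essentially supported in $H$. If $H$ were not open it would be Haar-null (a closed subgroup of positive Haar measure is open, by a Steinhaus-type argument), whence $\|\xi\|_2=0$ and $x=\pi_l(\xi)=0$, contradicting $x\neq0$. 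Therefore $H$ is open. The only delicate points are controlling the operator support of $\pi_l(\eta_\varepsilon)$ inside $V_\varepsilon H$ and the limiting argument identifying the essential support of $\xi$; both become routine once the support calculus of \cite{BoutonnetBrothier2016} and Lemma \ref{sptinc} are in hand.
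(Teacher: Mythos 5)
Your proposal is correct, but it reaches the conclusion by a noticeably different route in the middle of the argument, so it is worth comparing the two. The paper's proof is very short: it takes a nonzero $x\in\mathfrak{m}_{\varphi_G}^+\cap\mathcal{N}_H''$ and invokes Haagerup's result (\cite[Proposition 2.9(2)]{Haagerupdualweight}) that such an $x$ equals $\pi_l(f)$ for a \emph{continuous} left bounded $f$; it then cites Takesaki--Tatsuuma \cite[Theorem 3]{TakesakiTatsuuma1972} for $\supp(x)\subseteq H$, applies Lemma \ref{sptinc}, and finishes by noting that a nonzero continuous function supported in the closed set $H$ gives $H$ nonempty interior, hence $H$ is open. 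You avoid Haagerup's continuity result entirely: you take $x=\pi_l(\xi)$ with $\xi$ merely left bounded, re-prove the Takesaki--Tatsuuma support containment by hand (regularity of $A(G)$ plus normality of $m_\phi$ and the fact that $\mathcal{N}_H''$ is the $\sigma$-weak closure of $\lh\{\lambda_h\}$ --- this is correct), and then substitute a regularization $\eta_\varepsilon=u_\varepsilon*\xi$ for the missing continuity, controlling $\supp(\pi_l(\eta_\varepsilon))\subseteq V_\varepsilon H$ via the product rule for operator supports before applying Lemma \ref{sptinc} and passing to the limit; your ending (a closed non-open subgroup is locally Haar-null, so $\xi=0$, contradiction) replaces the paper's ``interior point'' ending. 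What the paper's route buys is brevity and no measure theory; what yours buys is independence from Haagerup's Proposition 2.9(2) and a self-contained proof of the support theorem, at the cost of two extra ingredients the paper never needs: the product rule $\supp(ab)\subseteq\overline{\supp(a)\supp(b)}$ and the Steinhaus-type dichotomy. Two citation-level caveats: the identification $\mathfrak{n}_{\varphi_G}=\{\pi_l(\xi):\xi \text{ left bounded}\}$ is not literally ``the definition of $\varphi_G$'' (the definition only constrains $x^*x$) but the standard left Hilbert algebra fact underlying it, and you should make sure the support product rule is actually available in the form you cite it (it is true, and provable from the localization $xL^2(A)\subseteq L^2(\overline{\supp(x)A})$, but you lean on it as a black box). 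Neither caveat is a gap in the mathematics.
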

\begin{proof}
    We identify $L(H)$ with $\mathcal{N}_H''$. Since $\varphi_G|\mathcal{N}_H''$ is semifinite, there is a nonzero $x\in \mathfrak{m}_{\varphi_G}^+ \cap \mathcal{N}_H''$. Then, by \cite[Proposition 2.9 (2)]{Haagerupdualweight}, $x=\pi_l(f)$ for some continuous left bounded $f\in L^2(G)$. Lemma \ref{sptinc} implies that ${\rm supp}(f) \subseteq {\rm supp}(\pi_l(f))$. Moreover, by \cite[Theorem 3]{TakesakiTatsuuma1972}, ${\rm supp}(\pi_l(f)) = {\rm supp}(x)\subseteq H$ holds. Since $f$ is continuous, nonzero and ${\rm supp}(f) \subseteq H$, it follows that $H$ is open. 
\end{proof}

Now, we obtain the following theorem. As mentioned earlier, (ii)$\implies$(i) was shown in \cite[Theorem 7.5]{KKS2016} in the context of quantum groups, and we learned about this result from Professor Reiji Tomatsu.

\begin{theorem}\label{openiffsf}
    For a locally compact group $G$ and a closed subgroup $H$, the following conditions are equivalent.
    \begin{enumerate}
        \item[(i)] $H$ is open.
        \item[(ii)] $\varphi_G|L(H)$ is semifinite.
     \end{enumerate}
\end{theorem}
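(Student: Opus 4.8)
The plan is to recognize that the two implications constituting this equivalence have already been isolated as separate propositions in the preceding development, so the proof reduces to assembling them. For the direction $\text{(i)}\implies\text{(ii)}$, I would simply invoke Proposition \ref{sfofrest}, which asserts that the restriction $\varphi_G|L(H)$ is semifinite whenever $H$ is an open subgroup. For the converse $\text{(ii)}\implies\text{(i)}$, I would invoke Proposition \ref{sfimpopen}, which states that semifiniteness of $\varphi_G|L(H)$ forces $H$ to be open. Together these yield the claimed equivalence, and essentially nothing further is required.

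It is nonetheless worth recording the conceptual asymmetry between the two halves, since that dictates where the real work resides. The implication $\text{(i)}\implies\text{(ii)}$ is constructive in nature: using the explicit $*$-isomorphism $\Phi$ from Proposition \ref{subisom} together with the $G$-invariant measure on $G/H$ (available because $\Delta_G|H=\Delta_H$ by Lemma \ref{openimpmodcondi}), one identifies $\Phi(\pi_l(C_c(H)))$ with $\pi_l(C_c(G)_H)$ and checks that this family lies in $\mathfrak{n}_{\varphi_G}\cap\mathcal{N}_H''$ with weak closure all of $\mathcal{N}_H''$, which is exactly what semifiniteness demands. The crucial quantitative input is the openness-equivalence $\mu(\{H\})>0$ from Proposition \ref{openiffmuposi}, which both ensures $C_c(G)_H\ne\{0\}$ and pins down the proportionality constant $C$.

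The main obstacle lies entirely in the converse $\text{(ii)}\implies\text{(i)}$, where one must extract geometric information (openness) from an operator-algebraic hypothesis. The strategy, executed in Proposition \ref{sfimpopen}, is to select a nonzero positive element $x\in\mathfrak{m}_{\varphi_G}^+\cap\mathcal{N}_H''$; by Haagerup's structure result such an $x$ takes the form $\pi_l(f)$ for a continuous left bounded $f\in L^2(G)$. One then localizes supports: Lemma \ref{sptinc} provides $\supp(f)\subseteq\supp(\pi_l(f))$, while the Takesaki--Tatsuuma support theorem confines $\supp(\pi_l(f))=\supp(x)$ inside $H$. Because $f$ is continuous, nonzero, and supported in $H$, the subgroup $H$ must have nonempty interior and hence be open. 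The analytic heart is therefore the support localization, and that is precisely where I would concentrate the effort.
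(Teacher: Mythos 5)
Your proposal is correct and coincides with the paper's own proof, which likewise just cites Proposition \ref{sfofrest} for (i)$\implies$(ii) and Proposition \ref{sfimpopen} for (ii)$\implies$(i). Your summary of where the actual work lies (the support localization via Lemma \ref{sptinc} and the Takesaki--Tatsuuma theorem for the converse) also accurately reflects the paper's development.
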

\begin{proof}
    Proposition \ref{sfofrest} and Proposition \ref{sfimpopen} imply the desired equivalence.
\end{proof}

\begin{corollary}
    For a locally compact group $G$ and a closed subgroup $H$, there exists a $\varphi_G$-preserving conditional expectation from $L(G)$ onto $L(H)$ if and only if $H$ is open.
\end{corollary}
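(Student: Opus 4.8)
The plan is to deduce the corollary from Theorem \ref{openiffsf} by invoking Takesaki's theorem on the existence of weight-preserving conditional expectations. Recall that Takesaki's theorem asserts the following: for a faithful normal semifinite weight $\varphi$ on a von Neumann algebra $M$ and a von Neumann subalgebra $N \subseteq M$, there exists a $\varphi$-preserving normal faithful conditional expectation $E\colon M \to N$ if and only if $N$ is globally invariant under the modular automorphism group $\sigma^{\varphi}$ and the restriction $\varphi|N$ is semifinite. I would apply this with $M = L(G)$, $N = L(H)$ (identified with $\mathcal{N}_H''$ as in Proposition \ref{sfofrest}) and $\varphi = \varphi_G$.

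First I would record that $L(H)$ is always globally invariant under $\sigma^{\varphi_G}$, so that the modular-invariance hypothesis of Takesaki's theorem is automatically satisfied (this was already noted in the introduction). Concretely, this follows from the explicit form of the modular automorphism group of the Plancherel weight, $\sigma_t^{\varphi_G}(\lambda_\ell^G(g)) = \Delta_G(g)^{it}\lambda_\ell^G(g)$ for $g \in G$: restricting to $h \in H$ gives $\sigma_t^{\varphi_G}(\lambda_\ell^G(h)) = \Delta_G(h)^{it}\lambda_\ell^G(h) \in \mathcal{N}_H''$, so that $\sigma_t^{\varphi_G}(L(H)) = L(H)$ for all $t \in \R$.

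With modular invariance in hand, Takesaki's theorem reduces the existence of a $\varphi_G$-preserving conditional expectation of $L(G)$ onto $L(H)$ exactly to the semifiniteness of $\varphi_G|L(H)$, and Theorem \ref{openiffsf} identifies the latter with the openness of $H$; combining these yields the claimed equivalence. There is no substantive new obstacle here, as all the hard analytic content is already contained in the proof of Theorem \ref{openiffsf}. The only points requiring care are citing the correct form of Takesaki's theorem and confirming the modular-invariance hypothesis, both of which are routine given the explicit modular flow of $\varphi_G$.
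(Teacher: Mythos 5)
Your proposal is correct and matches the paper's own proof: the paper likewise deduces the corollary by combining Takesaki's theorem on $\varphi$-preserving conditional expectations with Theorem \ref{openiffsf}, using the global $\sigma^{\varphi_G}$-invariance of $L(H)$ (which follows from $\sigma_t^{\varphi_G}(\lambda_\ell^G(h)) = \Delta_G(h)^{it}\lambda_\ell^G(h)$, exactly as you note). The only difference is that you spell out the modular-invariance verification that the paper simply asserts.
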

\begin{proof}
    Since $L(H)$ is globally $\sigma^{\varphi_G}$-invariant, the theorem follows from \cite{Takesaki72MR303307} and Theorem \ref{openiffsf}.
\end{proof}

As another corollary, we also recover a part of Garcia Guinto--Nelson's theorem.
\begin{corollary}[{\cite[Theorem 2.1]{guinto2025unimodulargroups}}]
    For a locally compact group $G$, the Plancherel weight of $G$ is strictly-semifinite if and only if $G_1$ is open in $G$, where $G_1$ is the kernel of $\Delta_G$.
\end{corollary}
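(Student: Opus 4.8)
The plan is to reduce the final corollary to the already-established Theorem \ref{openiffsf} by applying it with the specific choice $H = G_1 := \ker(\Delta_G)$. The key conceptual observation is that strict semifiniteness of $\varphi_G$ is, by definition, the semifiniteness of the restriction of $\varphi_G$ to its own centralizer $L(G)^{\varphi_G}$. So the whole argument hinges on identifying this centralizer with the subalgebra $L(G_1)$ (in the guise of $\{\lambda_\ell^G(h) : h \in G_1\}''$), after which Theorem \ref{openiffsf} applied to $H = G_1$ gives the equivalence directly.

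\medskip\noindent
First I would recall the modular theory of the Plancherel weight. The modular automorphism group of $\varphi_G$ is known to be implemented by the modular function: concretely, $\sigma_t^{\varphi_G}(\lambda_\ell^G(g)) = \Delta_G(g)^{it}\,\lambda_\ell^G(g)$ for all $g \in G$ and $t \in \R$. From this explicit formula, an element $\lambda_\ell^G(g)$ lies in the centralizer $L(G)^{\varphi_G}$ precisely when it is fixed by every $\sigma_t^{\varphi_G}$, i.e. exactly when $\Delta_G(g) = 1$, which is to say $g \in G_1 = \ker(\Delta_G)$. The second step is therefore to promote this group-element computation to the statement of von Neumann algebras, namely $L(G)^{\varphi_G} = \{\lambda_\ell^G(g) : g \in G_1\}''$; the containment $\supseteq$ is immediate from the fixed-point computation, and the reverse containment follows from the fact that the centralizer of an almost periodic (or more generally strictly semifinite, once that regime is assumed) weight on a group von Neumann algebra is generated by the group elements on which the modular action is trivial, which is standard.

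\medskip\noindent
With the identification $L(G)^{\varphi_G} = \{\lambda_\ell^G(g) : g\in G_1\}'' \cong L(G_1)$ in hand (the last isomorphism being Proposition \ref{subisom} applied to the closed subgroup $G_1$), the proof closes immediately: $\varphi_G$ is strictly semifinite if and only if $\varphi_G|L(G)^{\varphi_G} = \varphi_G|L(G_1)$ is semifinite, and by Theorem \ref{openiffsf} the latter holds if and only if $G_1$ is open in $G$. Note that $G_1$ is automatically a closed subgroup of $G$ (being the kernel of the continuous homomorphism $\Delta_G$), so Theorem \ref{openiffsf} is applicable.

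\medskip\noindent
The step I expect to carry the real content is the identification of the centralizer $L(G)^{\varphi_G}$ with $\{\lambda_\ell^G(g) : g \in G_1\}''$. The inclusion $\supseteq$ is routine from the modular automorphism formula, but the reverse inclusion — that the centralizer contains no more than the closed span of the $\Delta_G$-trivial group elements — is the delicate point, since a priori the centralizer of a weight can be strictly larger than the naive fixed-point algebra of the group elements (and, as the introduction remarks, centralizers of faithful normal semifinite weights can behave pathologically). In the present setting one resolves this either by invoking the explicit Fourier-analytic description of elements of $L(G)$ together with the supports machinery developed in Lemma \ref{sptinc} and Proposition \ref{sfimpopen}, or simply by appealing to \cite{guinto2025unimodulargroups} for the structure of $\sigma^{\varphi_G}$; since our goal is only to \emph{recover} their result via Theorem \ref{openiffsf}, the cleanest route is to take the identification of the centralizer as the definition of strict semifiniteness unwound through the known modular flow, and then let Theorem \ref{openiffsf} do the remaining work.
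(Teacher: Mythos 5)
Your overall skeleton is exactly the paper's: unwind the definition of strict semifiniteness as semifiniteness of $\varphi_G|L(G)^{\varphi_G}$, identify $L(G)^{\varphi_G}$ with $\{\lambda_\ell^G(g) : g\in G_1\}'' \cong L(G_1)$ (the last isomorphism being Proposition \ref{subisom}), and then apply Theorem \ref{openiffsf} with $H=G_1$. The paper's proof is precisely this three-step reduction, with the centralizer identification supplied by a single citation to Sutherland (\cite[Theorem 6.1]{Sutherland1973}).

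The one genuine problem is in how you justify the hard inclusion $L(G)^{\varphi_G} \subseteq \{\lambda_\ell^G(g): g\in G_1\}''$. You appeal to ``the fact that the centralizer of an almost periodic (or more generally strictly semifinite, once that regime is assumed) weight \dots is generated by the group elements on which the modular action is trivial.'' If that identification is only available under the hypothesis of strict semifiniteness or almost periodicity, then the direction ``$G_1$ open $\implies \varphi_G$ strictly semifinite'' becomes circular: Theorem \ref{openiffsf} gives you semifiniteness of $\varphi_G$ restricted to $\{\lambda_\ell^G(g): g\in G_1\}''$, but to convert this into strict semifiniteness you must already know that this subalgebra exhausts the centralizer --- which, on your formulation, you may only assert once strict semifiniteness is in hand. (Appealing instead to \cite{guinto2025unimodulargroups} is also unsatisfactory, since that is the result being recovered.) The repair is that the identification $L(G)^{\varphi_G} = \{\lambda_\ell^G(g): g\in G_1\}''$ is an \emph{unconditional} theorem about the Plancherel weight of an arbitrary locally compact group, namely \cite[Theorem 6.1]{Sutherland1973}; no a priori regularity of $\varphi_G$ is needed. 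With that reference in place of your hedge, your argument coincides with the paper's.
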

\begin{proof}
    Note that $\varphi_G$ is strictly-semifinite if and only if $\varphi_G|L(G)^{\varphi_G}$ is semifinite. It is known that $L(G)^{\varphi_G} \cong L(G_1)$ (see \cite[Theorem 6.1]{Sutherland1973}). Thus, Theorem \ref{openiffsf} implies the desired equivalence.
\end{proof}
We remark that in \cite[Theorem 2.1]{guinto2025unimodulargroups}, the strict semifiniteness of $\varphi_G$ is also shown to be equivalent to its almost periodicity. In $\S 4$, we will use the almost periodicity of $\varphi_G$ to characterize the factoriality of $L(G)$ for almost unimodular groups. We also remark that any totally disconnected locally compact group is almost unimodular \cite[Example 2.4(4)]{guinto2025unimodulargroups}.

In the rest of this section, we show that $\varphi_G|L(H)=C\cdot\varphi_H$ for some constant $C >0$ if $\varphi_G|L(H)$ is semifinite. Note that, when $H$ is an open subgroup of $G$, the pair $(G,H)$ admits the rho-function $\rho\equiv 1$ by Lemma \ref{openimpmodcondi}.

The proof is due to following Pedersen-Takesaki theorem (see also \cite[Chapter VIII, Proposition 3.15]{TakesakiOA2}). 

\begin{proposition}[{\cite[Proposition 5.9]{PedersenTakesaki1973}}]\label{TakTats}
    Let $M$ be a von Neumann algebra. Let $\varphi$ be a normal semifinite faithful weight on $M$. If $\psi$ is a normal semifinite weight on $M$ commuting with $\varphi$, that is, $\psi=\psi \circ \sigma_t^{\varphi} \,\, ( t \in \R)$, and if there exists a $\sigma$-weakly dense *-subalgebra $\mathfrak{m}_0$ of $\mathfrak{m}_{\varphi}$ such that $\sigma_t^{\varphi}(\mathfrak{m}_0)=\mathfrak{m}_0\,\, ( t \in \R)$ and $\varphi(x)= \psi(x)$ for every $x \in \mathfrak{m}_0$, then $\varphi=\psi$.
\end{proposition}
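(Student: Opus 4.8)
The plan is to reduce the statement to the non-commutative Radon--Nikodym theorem: using only that $\psi$ commutes with $\varphi$, one first represents $\psi$ as a weight of the form $\varphi_h$ for a positive self-adjoint operator $h$ affiliated with the centralizer $M^{\varphi}=M^{\sigma^{\varphi}}$, and then one uses the agreement of $\varphi$ and $\psi$ on the invariant dense subalgebra $\mathfrak{m}_0$ to force $h=1$, which gives $\psi=\varphi_1=\varphi$.

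To construct $h$, I would pass to the balanced weight $\theta$ on the amplification $M\otimes M_2(\C)$ whose diagonal corners are $\varphi$ and $\psi$, first replacing $\psi$ by its restriction to its support projection $p=s(\psi)$, which lies in $M^{\varphi}$ because $\psi$ is $\sigma^{\varphi}$-invariant. The off-diagonal part of the modular flow $\sigma^{\theta}$ produces the Connes cocycle $u_t=[D\psi:D\varphi]_t\in M$, satisfying $u_{s+t}=u_s\,\sigma^{\varphi}_s(u_t)$. The hypothesis $\psi\circ\sigma^{\varphi}_t=\psi$ is exactly what forces $\sigma^{\varphi}_s(u_t)=u_t$, so the cocycle identity collapses to $u_{s+t}=u_su_t$; thus $t\mapsto u_t$ is a $\sigma$-weakly continuous one-parameter unitary group in $pMp$. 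By Stone's theorem $u_t=h^{it}$ for a positive self-adjoint $h$ affiliated with $pMp$, and the $\sigma^{\varphi}$-invariance of $u_t$ places $h$ affiliated with $M^{\varphi}$. The characterization of the cocycle derivative then identifies $\psi$ with $\varphi_h$, i.e. $\psi(x)=\varphi(h^{1/2}xh^{1/2})$ in the appropriate limiting sense taken over the spectral truncations $h_n=h(1+h/n)^{-1}$.

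It then remains to prove $h=1$. The spectral projections $e=\chi_{(1+\varepsilon,\infty)}(h)$ and $f=\chi_{[0,1-\varepsilon)}(h)$ lie in $M^{\varphi}$, and on the reduced algebras $eMe$ and $fMf$ one has the operator comparisons $\varphi_h\ge(1+\varepsilon)\varphi$ and $\varphi_h\le(1-\varepsilon)\varphi$ respectively. Testing these against the inherited agreement $\varphi=\psi=\varphi_h$ on a $\sigma$-weakly dense, $\sigma^{\varphi}$-invariant part of $eMe$ (resp. $fMf$) extracted from $\mathfrak{m}_0$, one gets $(1+\varepsilon)\varphi(x)\le\varphi(x)$ (resp. $\varphi(x)\le(1-\varepsilon)\varphi(x)$) on positive $x$, hence $\varphi(e\,\cdot\,e)=\varphi(f\,\cdot\,f)=0$. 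Faithfulness of $\varphi$ forces $e=f=0$ for every $\varepsilon>0$, so the spectrum of $h$ is $\{1\}$ and $h=p$. Finally $\psi=\varphi(p\,\cdot\,p)$ together with $\varphi=\psi$ on $\mathfrak{m}_0$ and faithfulness of $\varphi$ yields $p=1$, completing the argument.

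The main obstacle is precisely the last step: $\mathfrak{m}_0$ is only assumed $\sigma$-weakly dense in $\mathfrak{m}_{\varphi}$, not a core for the weight, so transporting $\varphi=\psi$ from $\mathfrak{m}_0$ to the spectral subspaces of $h$ cannot be done by a naive compression $x\mapsto exe$, since $e\mathfrak{m}_0 e$ need not sit inside $\mathfrak{m}_0$. The two hypotheses that must carry the weight here are the $\sigma^{\varphi}$-invariance of $\mathfrak{m}_0$ (so that one may work with $\sigma^{\varphi}$-analytic elements and use the KMS condition to relate $\varphi$ and $\psi$ on products) and the normality and lower semicontinuity of both weights (to pass from the dense subalgebra to its $\sigma$-weak closure). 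Handling the non-faithful case of $\psi$ and the domain and affiliation technicalities of the unbounded $h$ is routine but must be controlled through the truncations $h_n$.
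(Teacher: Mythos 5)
The paper does not actually prove this proposition: it is quoted from Pedersen--Takesaki (with Takesaki's book, Chapter VIII as an alternative reference), so your attempt can only be judged on its own merits. The machinery in your first two steps is sound: the support projection $p$ lies in $M^{\varphi}$ by invariance, the cocycle $u_t=[D\psi:D\varphi]_t$ satisfies $\sigma^{\varphi}_s(u_t)=u_t$ precisely because $\psi\circ\sigma^{\varphi}_s=\psi$, the cocycle identity then collapses to a one-parameter unitary group, and Stone's theorem together with the fact that the cocycle determines the weight gives $\psi=\varphi_h$ with $h$ positive self-adjoint affiliated with the centralizer. (One caution: in Pedersen--Takesaki's own paper this proposition is established independently of their Radon--Nikodym theorem, so invoking that theorem is legitimate only because it can be proved via Connes cocycles without using the present statement; otherwise the argument would be circular.) Your final step, deducing $p=1$ once $h=p$ is known, also works: $\varphi(x)=\varphi(pxp)$ for $x\in\mathfrak{m}_0^+$ forces $(1-p)x=0$, and $\sigma$-weak density of $\mathfrak{m}_0$ then gives $p=1$.

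The genuine gap is the one you flag yourself, and it is not a technicality --- it is the entire content of the proposition. To run your spectral argument you need $\varphi$ and $\psi=\varphi_h$ to agree on a $\sigma$-weakly dense $*$-subalgebra of $eMe$, where $e=\chi_{(1+\varepsilon,\infty)}(h)$, but the hypothesis gives agreement only on $\mathfrak{m}_0$, and $e\mathfrak{m}_0e\not\subseteq\mathfrak{m}_0$; no procedure for ``extracting'' such a subalgebra is offered. The fallback you propose --- normality and lower semicontinuity to pass from $\mathfrak{m}_0$ to its $\sigma$-weak closure --- cannot work as stated: a normal weight is $\sigma$-weakly lower semicontinuous but not $\sigma$-weakly continuous, so along a Kaplansky-type net $\mathfrak{m}_0^+\ni x_i\to x\in\mathfrak{m}_{\varphi}^+$ one only obtains $\varphi(x)\le\liminf_i\varphi(x_i)$ and $\psi(x)\le\liminf_i\psi(x_i)$, two inequalities pointing in the same direction, from which no comparison of $\varphi(x)$ with $\psi(x)$ follows; if bare density sufficed, the commutation and $\sigma^{\varphi}$-invariance hypotheses would be superfluous. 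Note moreover that since every normal semifinite weight commuting with $\varphi$ is of the form $\varphi_h$, the missing implication ``agreement on $\mathfrak{m}_0$ forces $h=p=1$'' is equivalent to the proposition itself, so your reduction defers, rather than resolves, the essential difficulty. To close it you must make the invariance of $\mathfrak{m}_0$ do actual work --- for instance, produce from $\mathfrak{m}_0$ enough $\sigma^{\varphi}$-analytic elements whose GNS images form a core for the relevant modular operators, and compare the two weights through KMS-type analytic continuation --- which is precisely the part of the Pedersen--Takesaki argument that your outline leaves blank.
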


\begin{theorem}\label{restofplweight}
    Let $G$ be a locally compact group and $H$ be an open subgroup of $G$. Then $\varphi_G|L(H)=C^{-1}\cdot\varphi_H$. Here, $C=\mu(\{H\})$ where $\mu$ is the $G$-invariant Radon measure obtained by the rho-function $\rho \equiv 1$.
\end{theorem}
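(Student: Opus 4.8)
The plan is to apply the Pedersen--Takesaki theorem (Proposition \ref{TakTats}) on $M=L(H)$, taking $\varphi=\varphi_H$ as the reference normal semifinite faithful weight and $\psi=C\cdot(\varphi_G|L(H))$ as the normal semifinite weight to be identified with it. Throughout I identify $L(H)$ with $\mathcal N_H''=\{\lambda_\ell^G(h):h\in H\}''$ via the isomorphism $\Phi$ of Proposition \ref{subisom}, so that $\varphi_G|L(H)$ denotes the weight $x\mapsto\varphi_G(\Phi(x))$ on $L(H)$; this is normal because $\Phi$ is a normal isomorphism and $\varphi_G$ is normal, and it is semifinite by Theorem \ref{openiffsf}.

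First I would verify the commutation hypothesis. Recall the standard formula for the modular automorphism group of the Plancherel weight, $\sigma_t^{\varphi_G}(\lambda_\ell^G(g))=\Delta_G(g)^{it}\lambda_\ell^G(g)$, and likewise for $\varphi_H$. Since $H$ is open, Lemma \ref{openimpmodcondi} gives $\Delta_G|H=\Delta_H$, so on generators $\Phi(\sigma_t^{\varphi_H}(\lambda_\ell^H(h)))=\Delta_H(h)^{it}\lambda_\ell^G(h)=\Delta_G(h)^{it}\lambda_\ell^G(h)=\sigma_t^{\varphi_G}(\Phi(\lambda_\ell^H(h)))$; as both sides are normal and agree on generators, $\Phi$ intertwines the two modular flows, i.e. $\Phi\circ\sigma_t^{\varphi_H}=\sigma_t^{\varphi_G}\circ\Phi$. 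Consequently $\psi\circ\sigma_t^{\varphi_H}=C\,\varphi_G\circ\sigma_t^{\varphi_G}\circ\Phi=C\,\varphi_G\circ\Phi=\psi$, so $\psi$ commutes with $\varphi_H$.

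Next I would produce the dense invariant subalgebra. Set $\mathfrak m_0=\lh\{\pi_l(f)^*\pi_l(g):f,g\in C_c(H)\}\subseteq\mathfrak m_{\varphi_H}$, the inclusion holding since each $\pi_l(f)\in\mathfrak n_{\varphi_H}$. Using $\pi_l(f)^*\pi_l(g)=\pi_l(f^{\#}*g)$ one checks $\mathfrak m_0$ is a $*$-subalgebra; an approximate identity argument (taking self-adjoint $(e_\lambda)\subseteq C_c(H)$ so that $\pi_l(e_\lambda)^*\to 1$ strongly, whence $\pi_l(e_\lambda)^*\pi_l(g)\to\pi_l(g)$) shows $\mathfrak m_0''\supseteq\pi_l(C_c(H))''=L(H)$, so $\mathfrak m_0$ is $\sigma$-weakly dense; and it is $\sigma_t^{\varphi_H}$-invariant because $\sigma_t^{\varphi_H}(\pi_l(f))=\pi_l(f_t)$ with $f_t(x)=\Delta_H(x)^{it}f(x)\in C_c(H)$.

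The crux is the computation that $\varphi_H=\psi$ on $\mathfrak m_0$. For $f,g\in C_c(H)$ the Plancherel formula gives $\varphi_H(\pi_l(f)^*\pi_l(g))=\nai{g}{f}_{L^2(H)}$. On the other side, Proposition \ref{sfofrest} gives $\Phi(\pi_l(f))=\pi_l(C^{-1}\tilde f)$ for the zero extension $\tilde f\in C_c(G)_H$, so $\Phi(\pi_l(f)^*\pi_l(g))=C^{-2}\pi_l(\tilde f)^*\pi_l(\tilde g)$ and hence $\varphi_G(\Phi(\pi_l(f)^*\pi_l(g)))=C^{-2}\nai{\tilde g}{\tilde f}_{L^2(G)}$. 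Since $\tilde f,\tilde g$ are supported in $H$ and $\int_G F\,d\mu_G=C\int_H F\,d\mu_H$ for $F\in C_c(G)_H$, this equals $C^{-2}\cdot C\nai{g}{f}_{L^2(H)}=C^{-1}\nai{g}{f}_{L^2(H)}$, so $\psi=C\,\varphi_G\circ\Phi$ agrees with $\varphi_H$ on $\mathfrak m_0$. All hypotheses of Proposition \ref{TakTats} are then met, yielding $\varphi_H=C\,\varphi_G\circ\Phi$, that is $\varphi_G|L(H)=C^{-1}\varphi_H$. The main point requiring care is the intertwining of the modular flows together with the formula $\sigma_t^{\varphi_G}(\lambda_g)=\Delta_G(g)^{it}\lambda_g$, which underlies both the commutation hypothesis and the invariance of $\mathfrak m_0$; the remaining identity on $\mathfrak m_0$ is a direct consequence of the explicit form of $\Phi$.
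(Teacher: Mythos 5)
Your proposal is correct, and its skeleton is the paper's: both identify $L(H)$ with $\{\lambda_\ell^G(h):h\in H\}''$ via $\Phi$, apply the Pedersen--Takesaki criterion (Proposition \ref{TakTats}) to the very same dense invariant subalgebra (your $\mathfrak m_0=\lh\{\pi_l(f^{\#}*g)\}$ coincides with the paper's $\pi_l(A)$, $A=\lh\{f*g\}$, since $f\mapsto f^{\#}$ is a bijection of $C_c(H)$), and both rest on $\Phi(\pi_l(f))=\pi_l(C^{-1}\tilde f)$ from Proposition \ref{sfofrest} together with $\Delta_G|_H=\Delta_H$. Where you genuinely differ is in how the hypotheses are verified, and each of your choices streamlines the paper's argument. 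First, for the weight evaluation you invoke $\varphi_H(\pi_l(f)^*\pi_l(g))=\nai{g}{f}_{L^2(H)}$ as "the Plancherel formula," whereas the paper proves the equivalent identity $\varphi_H(\pi_l(f*g))=(f*g)(e)$ by factoring $(f*g)^*$ as $h_0*h_0^{\flat}$ through the positive-type function machinery of Lemmas \ref{rbddlesskdelta}, \ref{convresol} and \ref{rbddimp*lbdd}; your formula is not quotable for free in this framework, but it does follow in one line from Definition \ref{defofplweight}: polarize $\pi_l(f)^*\pi_l(g)=\tfrac14\sum_{k=0}^{3}i^k\pi_l(g+i^kf)^*\pi_l(g+i^kf)$ and use linearity of $\varphi_H$ on $\mathfrak m_{\varphi_H}$ (and likewise for $\varphi_G$ on $\pi_l(\tilde f)^*\pi_l(\tilde g)$). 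You should state this polarization step explicitly, since it is precisely what replaces three of the paper's lemmas. Second, your density argument ($\#$-symmetric approximate identity, so $\pi_l(e_\lambda)^*\pi_l(g)\to\pi_l(g)$ strongly, then the von Neumann density theorem) is cleaner than the paper's subnet construction. Third, for commutation you prove $\Phi\circ\sigma_t^{\varphi_H}=\sigma_t^{\varphi_G}\circ\Phi$ on the generators and conclude by normality, thereby never needing the modular group of the restricted weight itself; the paper instead uses the restriction theorem $\sigma_t^{\psi}=\sigma_t^{\varphi_G}|_{L(H)}$. The constants check out ($\nai{\tilde g}{\tilde f}_{L^2(G)}=C\nai{g}{f}_{L^2(H)}$, so $\psi=C\varphi_G\circ\Phi$ agrees with $\varphi_H$ on $\mathfrak m_0$), so modulo writing out the polarization identity your proof is complete and somewhat more economical than the paper's.
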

\begin{proof}
    Put $\psi = \varphi_G|L(H)$. By Proposition \ref{sfofrest}, $\psi$ is semifinite. Note that $\Delta_G|H=\Delta_H$ holds by Lemma \ref{openimpmodcondi}. Since $\sigma_t^{\varphi_G}(L(H))=L(H)$, we have $$\sigma_t^{\psi}(\lambda_h)=\sigma_t^{\varphi_G}(\lambda_h)=\Delta_G(h)^{it}\lambda_h=\Delta_H(h)^{it}\lambda_h=\sigma_t^{\varphi_H}(\lambda_h)=\sigma_t^{C^{-1}\cdot\varphi_H}(\lambda_h)$$
    for all $h \in H$. Thus, $\sigma_t^{\psi}(x)=\sigma_t^{C^{-1}\cdot\varphi_H}(x)$ for all $x \in L(H)$ and then $\psi \circ \sigma_t^{C^{-1}\cdot\varphi_H}=\psi\circ\sigma_t^{\psi} = \psi$. \par
    Now, put $A=\text{span}\{f*g\,|\,f,g\in C_c(H)\} \subseteq C_c(H)$ and we will show that $\pi_l(A)$ satisfies the conditions of Proposition \ref{TakTats} for $\mathfrak{m}_0$.\par
    First, we show $\pi_l(A)$ is a $\sigma$-weakly dense *-subalgebra of $\mathfrak{m}_{\varphi_H}$ because $\mathfrak{m}_{\varphi_H}=\mathfrak{m}_{C^{-1}\cdot\varphi_H}$. Lemma \ref{sumprodadj} ensures that $\pi_l(A)$ is a *-subalgebra of $\B(L^2(H))$. Indeed, $\pi_l(f)\pi_l(g)=\pi_l(f^{\#})^*\pi_l(g)=\pi_l((f^{\#})^{\#}*g)=\pi_l(f*g)\in \pi_l(A) \,\,( f,g \in C_c(H))$.
    Moreover, for $a=\sum_{i=1}^{n}f_i*g_i \in A$, $$\pi_l(a)=\sum_{i=1}^{n}\pi_l(f_i*g_i)=\sum_{i=1}^{n}\pi_l(f_i^{\#})^*\pi_l(g_i)\in \mathfrak{n}_{\varphi_H}^*\mathfrak{n}_{\varphi_H}=\mathfrak{m}_{\varphi_H}$$ because $\pi_l(f) \in \mathfrak{n}_{\varphi_H}$ for all left bounded functions $f \in L^2(G)$. It follows that $\pi_l(A)$ is a *-subalgebra of $\mathfrak{m}_{\varphi_H}$. We show that for each $x \in \mathfrak{m}_{\varphi_H}$, there exists a net $(f_i)_{i\in I} \subseteq A$ such that $\pi_l(f_i) \to x$ strongly. Since $L(H)=\pi_l(C_c(H))''$ and $\text{span}(\pi_l(C_c(H))) \subseteq \pi_l(C_c(H))$, there exists a net $(g_{\lambda})_{\lambda\in \Lambda} \subseteq C_c(H)$ such that $\pi_l(g_\lambda) \to x$ strongly. Let $(h_\gamma)_{\gamma\in \Gamma}\subseteq C_c(H)$ be an approximate identity. Define $g_{\lambda,\gamma}=h_\gamma *g_\lambda$ for each $\lambda \in \Lambda,\gamma\in\Gamma$. Then, for all $\lambda \in \Lambda$, it follows that $(g_{\lambda,\gamma})_{\gamma\in\Gamma} \subseteq A$ is a net such that $\|g_{\lambda,\gamma}-g_{\lambda}\|_1 \to 0$. Set $$I:=\{(\lambda,\gamma,\varepsilon):\lambda\in\Lambda,\gamma\in\Gamma,\varepsilon>0,\|g_{\lambda,\gamma}-g_\lambda\|_1<\varepsilon\}$$
    and define $(\lambda_1,\gamma_1,\varepsilon_1)\le (\lambda_2,\gamma_2,\varepsilon_2)$ if $\lambda_1\le\lambda_2,\gamma_1\le\gamma_2
    $ and $\varepsilon_1\ge \varepsilon_2$. We shall now confirm that $I$ is a directed set. For all $(\lambda_1,\gamma_1,\varepsilon_1),(\lambda_2,\gamma_2,\varepsilon_2)\in I$, there exists $0<\varepsilon_3<\text{min}\{\varepsilon_1,\varepsilon_2\}$ and $\lambda_3\ge\lambda_1,\lambda_2$. Moreover, we can choose $\gamma_3\in\Gamma$ such that $\|g_{\lambda_3,\gamma_3}-g_{\lambda_3}\|_1<\varepsilon_3$ and $\gamma_3 \ge \gamma_1,\gamma_2$. Therefore, $(\lambda_3,\gamma_3,\varepsilon_3)\in I$ and $(\lambda_3,\gamma_3,\varepsilon_3) \ge (\lambda_1,\gamma_1,\varepsilon_1),(\lambda_2,\gamma_2,\varepsilon_2)$ holds. Define $p_\Lambda\colon I\to\Lambda;(\lambda,\gamma,\varepsilon)\mapsto\lambda$ and $p_\Gamma\colon I\to\Gamma;(\lambda,\gamma,\varepsilon)\mapsto\gamma$, we shall now check that $p\colon I\to \Lambda\times\Gamma;(\lambda,\gamma,\varepsilon)\mapsto(\lambda,\gamma)$ defines a subnet of $\Lambda\times\Gamma$ as a double index net. For each  $(\lambda_0,\gamma_0)\in\Lambda\times\Gamma$, there exists a $\varepsilon_0>0$ such that $\|g_{\lambda_0,\gamma_0}-g_{\lambda_0}\|_1<\varepsilon_0$. Define $i_0=(\lambda_0,\gamma_0,\varepsilon_0)\in I$. For each $i\ge i_0$, we obtain $p(i)\ge p(i_0)=(\lambda_0,\gamma_0)$. This shows $(g_{p(i)})_{i\in I}$ is a subnet of $(g_{\lambda,\gamma})_{(\lambda,\gamma)\in \Lambda\times\Gamma}\subseteq A$. Similarly, it follows that $p_\Lambda$ defines a subnet of $\Lambda$. Now, it can be shown that $\|g_{p(i)}-g_{p_\Lambda(i)}\|_1 \to 0$. For each $\varepsilon>0$, there exists $(\lambda_0.\gamma_0)\in\Lambda\times\Gamma$ such that $\|g_{\lambda_0,\gamma_0}-g_{\lambda_0}\|_1<\varepsilon$. Set $i_0=(\lambda_0,\gamma_0,\varepsilon)\in I$. For all $i\ge i_0$, we obtain $\|g_{p(i)}-g_{p_\Lambda(i)}\|_1<\varepsilon$. Hence, we conclude that $\|g_{p(i)}-g_{p_\Lambda(i)}\|_1 \to 0$. Finally, we will show $(\pi_l(f_i))_{i\in I}$ converges $x$ strongly where $f_i:=g_{p(i)}\,\,(i\in I)$. For each $\xi\in L^2(H)$,
    \begin{align*}
        \|x\xi-\pi_l(f_i)\xi\|_2 & \le \|x\xi- \pi_l(g_{p_\Lambda(i)})\xi\|_2 + \|\pi_l(g_{p_\Lambda(i)})\xi-\pi_l(f_i)\xi\|_2 \\ &= \|x\xi- \pi_l(g_{p_\Lambda(i)})\xi\|_2 + \|g_{p_\Lambda(i)}*\xi-f_i*\xi\|_2 \\ & \le \|x\xi- \pi_l(g_{p_\Lambda(i)})\xi\|_2 + \|g_{p_\Lambda(i)}-f_i\|_1\|\xi\|_2 \to 0 .
    \end{align*}
    Therefore, $(f_i)_{i\in I}$ is desired one. We conclude that $\pi_l(A)$ is a $\sigma$-weakly dense *-subalgebra of $\mathfrak{m}_{C^{-1}\cdot\varphi_H}$. \par
    Next, we show $\sigma_t^{C^{-1}\cdot\varphi_H}(\pi_l(A))=\pi_l(A)\,\,( t \in \R)$. It suffices to show that $\sigma_t^{\varphi_H}(\pi_l(A)) \subseteq \pi_l(A)\,\,( t \in \R)$ since $\sigma_t^{C^{-1}\cdot\varphi_H}=\sigma_t^{\varphi_H}\,\,( t\in\R)$. We use the form $\pi_l(f)=\int f(x)\lambda_x\, dx$ for $f \in C_c(H)$. Then, for $t\in \R$,  $$\sigma_t^{\varphi_H}(\pi_l(f))=\sigma_t^{\varphi_H}(\int f(x)\lambda_x\, dx) = \int f(x)\sigma_t^{\varphi_H}(\lambda_x) \, dx = \int f(x)\Delta_H(x)^{it}\lambda_x\, dx =\pi_l(f^{(t)})$$ where $f^{(t)}(x)=f(x)\Delta_H(x)^{it} \in C_c(H)$. 
    Moreover, \begin{align*}
         (f*g)^{(t)}(x) =\Delta_H(x)^{it}(f*g)(x)&=\Delta_H(x)^{it}\int f(y)g(y^{-1}x) \, dy \\ &=\int \Delta_H(y)^{it}f(y)\Delta_H(y)^{-it}\Delta_H(x)^{it}g(y^{-1}x)\, dy  \\ &= \int f^{(t)}(y)g^{(t)}(y^{-1}x)\, dy = f^{(t)}*g^{(t)}(x). 
    \end{align*}
    Thus, for $a=\sum_{i=1}^{n}f_i*g_i \in A$, $$\sigma_t^{\varphi_H}(\pi_l(a))=\sum_{i=1}^{n}\sigma_t^{\varphi_H}(\pi_l(f_i*g_i))=\sum_{i=1}^{n}\pi_l(f_i^{(t)}*g_i^{(t)})=\pi_l(\sum_{i=1}^{n}f_i^{(t)}*g_i^{(t)}) \in \pi_l(A).$$
    Therefore, we obtain $\sigma_t^{C^{-1}\cdot\varphi_H}(\pi_l(A)) \subseteq \pi_l(A)\,\,( t \in \R)$. \par
    Finally, we show that $\psi(x)=C^{-1}\cdot\varphi_H(x)$ for all $x \in \pi_l(A)$. It is known that $A \subseteq C_c(H) \cap P(H)$ \cite[Proposition 3.33]{Folland}. Fix $f,g \in C_c(H)$. Since $*$-operation is an involution of $C_c(H)$, we have $(f*g)^*=g^**f^*\in A \subseteq C_c(H) \cap P(H)$. Then, Lemma \ref{rbddlesskdelta} and Lemma \ref{convresol} implies $(f*g)^*=h_0*h_0^{\flat}$ for some right bounded function $h \in L^2(H)$. Hence, $$f*g=((f*g)^*)^*=(h_0*h_0^{\flat})^*=(h_0^{\flat})^**h_0^*=(h_0^*)^{\#}*h_0^*$$
    and $h=h_0^*$ is left bounded by Lemma \ref{rbddimp*lbdd}. We obtain $$\varphi_H(\pi_l(f*g))=\varphi_H(\pi_l(h^{\#}*h))=\varphi_H(\pi_l(h)^*\pi_l(h))=\|h\|_2^2=(h^{\#}*h)(e)=(f*g)(e).$$ Therefore, $\varphi_H(\pi_l(a))=a(e)\,\,( a \in A)$ holds. To complete the proof, we have to show $\psi(\pi_l(a))=C^{-1}\cdot a(e) \,\,( a\in A)$. Note that we identify $L(H)$ with $\{\lambda_h:h\in H\}'' \subseteq L(G)$ via Proposition \ref{subisom}. Let $\Phi$ be the isomorphism i.e. $\Phi(x)=U^*\widehat{x}U$ for $x \in L(H)$. We know that $\Phi(\pi_l(f))=\pi_l(C^{-1}\cdot \tilde{f})$ for $f \in C_c(H)$ as in Proposition \ref{sfofrest}. Fix $f,g\in C_c(H)$, $$\widetilde{f*g}(h)=\int_Hf(y)g(y^{-1}h)\, d\mu_H(y)=C^{-1}\int_H \tilde{f}(y)\tilde{g}(y^{-1}h)\,d\mu_G(y)=C^{-1}(\tilde{f}*\tilde{g})(h) \,\,(h\in H).$$
    Thus, we obtain $\Phi(\pi_l(f*g))=\pi_l(C^{-1}\cdot\widetilde{f*g})=\pi_l(C^{-2}\cdot\tilde{f}*\tilde{g})$. Therefore, it follows that 
    \begin{align*}        \psi(\Phi(\pi_l(f*g)))&=\psi(\pi_l(C^{-2}\cdot\tilde{f}*\tilde{g}))=\varphi_G(\pi_l(C^{-2}\cdot \tilde{f}*\tilde{g}))\\&=C^{-2}(\tilde{f}*\tilde{g})(e)=C^{-1}(\widetilde{f*g})(e)=C^{-1}(f*g)(e).
    \end{align*}
    We use the formula $\varphi_G(\pi_l(f*g))=(f*g)(e)$ for all $f,g\in C_c(G)$.
    Hence, we conclude that $\psi(\pi_l(a))=C^{-1}\cdot a(e) \,\,( a\in A)$. 
\end{proof}

\begin{remark}
    The constant $C>0$ in Theorem \ref{restofplweight} depends on the choice of $\mu_G$ and $\mu_H$. Thus, if we fix a left Haar measure $\mu_G$ on $G$, then there exists the left Haar measure $\mu_H$ on $H$ which satisfies $C=1$. 
\end{remark}

Now, we obtain the main theorem. 

\begin{theorem}\label{plweightmainthm}
    For a locally compact group $G$ and a closed subgroup $H$, the following conditions are equivalent.
    \begin{enumerate}
        \item[(i)] $H$ is open.
        \item[(ii)] $\varphi_G|L(H)$ is semifinte.
        \item[(iii)] $\varphi_G|L(H)=C\cdot\varphi_H$ for some constant $C>0$.
     \end{enumerate}
\end{theorem}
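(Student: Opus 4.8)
The plan is to obtain the three-way equivalence as a formal consequence of the two substantial results already in place, Theorem \ref{openiffsf} and Theorem \ref{restofplweight}, closing the logical loop with one trivial implication. In other words, essentially all of the genuine analytic work has already been done; what remains is to assemble the pieces and to keep careful track of the positive constant. I would organize the argument as the cycle (iii)$\implies$(ii)$\implies$(i)$\implies$(iii), so that each of the three statements is seen to imply the next.

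For the implication (i)$\implies$(iii), I would simply invoke Theorem \ref{restofplweight}. When $H$ is open, Lemma \ref{openimpmodcondi} guarantees $\Delta_G|H=\Delta_H$, so the pair $(G,H)$ admits the rho-function $\rho\equiv 1$ and the associated strongly quasi-invariant measure $\mu$ on $G/H$ is genuinely $G$-invariant. Theorem \ref{restofplweight} then yields $\varphi_G|L(H)=C_0^{-1}\cdot\varphi_H$ with $C_0=\mu(\{H\})$, and Proposition \ref{openiffmuposi} shows that openness of $H$ forces $C_0=\mu(\{H\})>0$. Hence $C_0^{-1}$ is a strictly positive constant and (iii) holds with this explicit value; the constant named $C$ in the statement of (iii) is thus exactly $\mu(\{H\})^{-1}$.

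The implication (iii)$\implies$(ii) is immediate and requires no new input. By Definition \ref{defofplweight}, $\varphi_H$ is a normal faithful semifinite weight on $L(H)$, and any strictly positive scalar multiple of a semifinite weight is again semifinite. Therefore, if $\varphi_G|L(H)=C\cdot\varphi_H$ for some $C>0$, then $\varphi_G|L(H)$ is semifinite, which is precisely (ii). Finally, the implication (ii)$\implies$(i) is exactly Proposition \ref{sfimpopen} (equivalently, the content of Theorem \ref{openiffsf}), completing the cycle and hence establishing the equivalence of all three conditions.

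Because the hard content has been isolated into the earlier results — the support-theoretic characterization underlying Proposition \ref{sfimpopen} and Theorem \ref{openiffsf}, and the Pedersen--Takesaki uniqueness argument underlying Theorem \ref{restofplweight} — I do not expect any genuine obstacle at this stage. The only point demanding mild care is the bookkeeping of the constant: one must verify that the scalar produced by Theorem \ref{restofplweight} is strictly positive (which is guaranteed by Proposition \ref{openiffmuposi}) and be explicit that the constant of (iii) is $C=\mu(\{H\})^{-1}$ for the fixed choice of invariant measure, since by the earlier remark it depends on the normalizations of $\mu_G$ and $\mu_H$.
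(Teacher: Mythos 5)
Your proposal is correct and matches the paper's own proof, which likewise assembles Theorem \ref{openiffsf} (for the equivalence of openness and semifiniteness), Theorem \ref{restofplweight} (for (i)$\implies$(iii)), and the trivial implication (iii)$\implies$(ii). Your cyclic organization and the explicit identification of the constant as $\mu(\{H\})^{-1}$ are consistent with the paper; no gap.
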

\begin{proof}
   $\text{(i)}\iff \text{(ii)}.$ Theorem \ref{openiffsf}. 
   $\text{(i)}\implies \text{(iii)}.$ Theorem \ref{restofplweight}.
   $\text{(iii)}\implies \text{(ii)}.$ Trivial.
\end{proof}

\begin{remark}
     As mentioned earlier, $\text{(ii)}\iff \text{(iii)}$ follows from the uniqueness of Haar weight on Kac algebras or quantum groups (see, e.g.\cite[Theorem 2.7.7]{EnockSchwartz1992} or \cite{JohanVaes2000}), and we learned about this result from Professor Reiji Tomatsu.
\end{remark}

\section{Twisted crossed products}
Throughout $\S3,\S4$ and $\S5$, von Neumann algebras are assumed to have separable preduals, locally compact groups are assumed to be second countable. In this section, we study twisted crossed products of von Neumann algebras. We recall the definition. 

\begin{definition}
    For locally compact groups $G,H$, a cocycle action $(\alpha,c)\colon H \curvearrowright G$ is a pair of maps $\alpha\colon H \to \text{Aut}(G)$ and $c\colon H \times H \to G$ satisfying the relations
    $$\alpha_s\alpha_t=\text{Ad}(c(s,t))\alpha_{st},\,\, c(s,t)c(st,r)=\alpha_s (c(t,r))c(s,tr),\,\,c(s,e_H)=c(e_H,s)=e_G $$
    where $s,t,r \in H$. We say the cocycle action is continuous if the maps $H \times G \ni (s,x)\mapsto \alpha_s(x) \in G$ and $H \times H \ni (s,t) \mapsto c(s,t) \in G$ are continuous where the products are equipped with the product topology.
\end{definition}

\begin{definition}\label{twistcrossprodgrp}
    For locally compact groups $G,H$ and a continuous cocycle action $(\alpha,c)\colon H \curvearrowright G$, we define twisted crossed product as a topological group, denoted by $G \rtimes_{\alpha,c}H$, consisting of $G\times H$ equipped with the product topology and the following group operations
    $$(x,s)(y,t)=(x\alpha_s(y)c(s,t),st),\,\,\,\,(x,s)^{-1}=(\alpha_s^{-1}(x^{-1}c(s,s^{-1})^{-1}),s^{-1})$$
    where $x,y \in G, s,t \in H$. Also, the unit is $(e_G,e_H)$.
\end{definition}

\begin{definition}\label{defofcocycle}
    For a von Neumann algebra $M$ and a locally compact group $G$, a cocycle action $(\alpha,u)\colon G \curvearrowright M$ is a pair of maps $\alpha\colon G \to \text{Aut}(M)$ and $u\colon G \times G \to M^{U}$ satisfying the relations $$\alpha_s\alpha_t=\text{Ad}(u(s,t))\alpha_{st},\,\,u(s,t)u(st,r)=\alpha_s(u(t,r))u(s,tr),\,\,u(s,e_G)=u(e_G,s)=1_M$$ where $s,t,r\in G$. Additionally, we demand that $\alpha$ and $u$ are Borel. That is, $G \ni g \mapsto \alpha_g\in \text{Aut}(M)$ and $u$ are Borel measurable, where a Borel structure on $\text{Aut}(M)$ is generated by the u-topology (equivalent to the topology of pointwise norm convergence against $M_*$) and on $M^U$ is generated by the strong $*$-topology on $M^{U}$.
\end{definition}

\begin{definition}\label{defoftwistvN}
    For a von Neumann algebra $M$ acting on a Hilbert space $\mathcal{H}$, a locally compact group $G$ and a cocycle action $(\alpha,u)\colon G \curvearrowright M$, we define the left twisted crossed product of $M$ by $G$ is the von Neumann algebra on $L^2(G,\mathcal{H})$ (with fixed left Haar measure) generated by the operators 
    \begin{equation*}
    \begin{cases}
        (I^{\alpha}_\ell(x)\xi)(g)=\alpha_{g^{-1}}(x)\xi(g), \\
        (\lambda_\ell^{u}(h)\xi)(g)=u(g^{-1},h)\xi(h^{-1}g)
    \end{cases}
    \end{equation*}
    where $x \in M, g,h\in G, \xi \in L^2(G, \mathcal{H})$. We denote $M \rtimes_{\alpha,u}^{\ell}G$. Also, we define the right twisted crossed product of $M$ by $G$ is the von Neumann algebra on $L^2(G,\mathcal{H})$ (with fixed right Haar measure) generated by the operators  
    \begin{equation*}
    \begin{cases}
        (I_r^{\alpha}(x)\xi)(g)=\alpha_g(x)\xi(g), \\
        (\lambda_r^{u}(h)\xi)(g)=u(g,h)\xi(gh)
    \end{cases}
    \end{equation*}
    where $x \in M, g,h\in G, \xi \in L^2(G, \mathcal{H})$. We denote $M \rtimes_{\alpha,u}^{r}G$.
\end{definition}

\begin{remark}
    Note that the right twisted crossed product is used in \cite{NakagamiSutherland1979}. Moreover, let $M$ be a von Neumann algebra acting on a Hilbert space $\mathcal{H}$. Let $G$ be a locally compact group and $(\alpha,u)\colon G \curvearrowright M$ be a cocycle action as described above. Fix a left Haar measure $\mu_G$ on $G$. Then, the right Haar measure on $G$ associated with $\mu_G$ is defined by $\rho_G(E)=\mu_G(E^{-1})$ for each Borel set $E \subseteq G$. In this case, we define the unitary $U\colon L^2(G, \mathcal{H},\mu_G) \to L^2(G, \mathcal{H},\rho_G)$ given by 
    $$(U\xi)(g)=\Delta_G(g)^{-1/2}\xi(g^{-1})\,\,(\xi\in L^2(G, \mathcal{H},\mu_G),g\in G).$$
    It can be shown that this unitary gives the spatial isomorphism $M\rtimes_{\alpha,u}^\ell G \cong M \rtimes_{\alpha,u}^r G$.
\end{remark}

\begin{remark}
    With the same notation as above, it follows that $I_\ell^\alpha$ and $\lambda_\ell^u$ satisfy the covariant relation. That is, for $x\in M, s\in G$, 
    \begin{align*}(\lambda_\ell^u(s)I_\ell^\alpha(x)\lambda_\ell^u(s)^*\xi)(t) &=u(t^{-1},s)(I_\ell^\alpha(x)\lambda_\ell^u(s)^*\xi)(s^{-1}t)
    \\&= u(t^{-1},s)\alpha_{t^{-1}s}(x)(\lambda_\ell^u(s)^*\xi)(s^{-1}t)
    \\&= u(t^{-1},s)\alpha_{t^{-1}s}(x)u(t^{-1},s)^*\xi(t)
    \\&= \text{Ad}(u(t^{-1},s))\alpha_{t^{-1}s}(x)\xi(t)
    \\&=\alpha_t^{-1}(\alpha_s(x))\xi(t)
    \\&=(I_\ell^{\alpha}(\alpha_s(x))\xi)(t)\,\,(\xi\in L^2(G,\mathcal{H})).
    \end{align*}
    Therefore, we obtain    $$\lambda_\ell^u(s)I_\ell^\alpha(x)\lambda_\ell^u(s)^*=I_\ell^{\alpha}(\alpha_s(x))$$
    for all $x\in M, s \in G$.
\end{remark}

Now, we study actions and dual actions. For a moment, we assume that $G$ has a right Haar measure. Throughout the rest of this paper, an action of $G$ on $M$ means a pointwise $\sigma$-weakly continuous action unless explicitly stated otherwise.\par
First, we note that we can identify an action of $G$ on $M$ with an isomorphism of $M$ into $M \overline{\otimes} L^\infty(G)$. We refer to \cite{Nakagami1977} for details.
That is, fix an action $\alpha$ of a locally compact group $G$ on a von Neumann algebra $M$, $\pi_\alpha\colon M\to M \overline{\otimes}L^\infty(G)$ is defined by $$\pi_\alpha(x)(g)=\alpha_g(x)\,\,(g\in G)$$ where we identify $M \overline{\otimes} L^\infty(G)$ with $L^\infty(G,M)$ of all essentially bounded $M$-valued $\sigma$-weakly measurable functions on G. In \cite[Theorem 2.1]{Nakagami1977}, it was shown that a mapping $\alpha$ of $M$ into $M \overline{\otimes}L^{\infty}(G)$ be induced an action $\sigma$ with $(\alpha(x)\xi)(g)=\sigma_g(x)\xi(g)$ if and only if $\alpha$ is an injective homomorphism which satisfies $$(\alpha\otimes \iota)\circ\alpha=(\iota\otimes\delta)\circ\alpha$$ where 
$$\delta\colon L^{\infty}(G)\to L^{\infty}(G)\overline{\otimes}L^{\infty}(G);(\delta f)(s,t)=f(st)$$ and $\iota$ means identity map on $L^{\infty}(G)$ (Here, we identify $L^{\infty}(G)\overline{\otimes}L^{\infty}(G)$ with $L^{\infty}(G\times G)$). In this view point, we define the dual action of $G$ as an isomorphism of $M$ into $M\vNtensor R(G)$ where $R(G)$ denotes the von Neumann algebra generated by the right regular representation on $L^2(G, \rho_G)$, $\rho_G$ stands for the right Haar measure on $G$.
 
\begin{definition}[{\cite[Definition 2.2]{Nakagami1977}}]
    For a von Neumann algebra $N$ acting on a Hilbert space $\mathcal{K}$ and a locally compact group $G$, a dual action of $G$ on $N$ is an isomorphism of $N$ into $N \vNtensor R(G)$ satisfying $$(\beta\otimes\iota)\circ\beta=(\iota\otimes\gamma)\circ\beta$$
    where $\gamma$ is a normal unital injective homomorphism characterized by $$\gamma\colon R(G)\to R(G) \vNtensor R(G); \gamma(\rho_g)=\rho_g \otimes\rho_g.$$
    We also define crossed dual product $N\rtimes_\beta^{d}G$ be the von Neumann algebra generated by the operators $\beta(N)$ and $1\otimes L^{\infty}(G)$. 
\end{definition}

\begin{remark}
    In the above definition, we note that dual action is not an action in usual sense. 
\end{remark}

Now, we analyze the case where $G$ is abelian. Thus, in the rest of this section, $G$ is assumed to be abelian and admits a fixed (left and right) invariant Haar measure. First, we define the action of $\hat{G}$. Fix a von Neumann algebra $M$ acting on a Hilbert space $\mathcal{H}$, a locally compact abelian group $G$ and a cocycle action $(\alpha,u)\colon G\curvearrowright M$. Following \cite[II, P150]{Sutherland1980}, we define for $p\in \hat{G}$ a unitary $\mu(p)$ on $L^2(G, \mathcal{H})$ by $$(\mu(p)\xi)(g)=\overline{\nai{g}{p}}\xi(g)\,\,\,(\xi\in L^2(G, \mathcal{H}),g\in G).$$
Here, for $x\in M$, on $M \rtimes_{\alpha,u}^{\ell}G$
\begin{align*}
    (\mu(p)I_\ell^{\alpha}(x)\mu(p)^{*}\xi)(g) &= \overline{\nai{g}{p}}(I_\ell^{\alpha}(x)\mu(p)^*\xi)(g) \\&= \overline{\nai{g}{p}}\alpha_{g^{-1}}(x)(\mu(p)^*\xi)(g) \\&= \overline{\nai{g}{p}}\alpha_{g^{-1}}(x)\nai{g}{p}\xi(g) \\&=\alpha_{g^{-1}}(x)\xi(g)\\&= (I_\ell^{\alpha}(x)\xi)(g).
\end{align*}
Also, for $h\in G$, 
\begin{align*}
    (\mu(p)\lambda_\ell^{u}(h)\mu(p)^{*}\xi)(g) &= \overline{\nai{g}{p}}(\lambda_\ell^{u}(h)\mu(p)^*\xi)(g) \\&= \overline{\nai{g}{p}}u(g^{-1},h)(\mu(p)^*\xi)(h^{-1}g) \\&= \overline{\nai{g}{p}}u(g^{-1},h)\nai{h^{-1}g}{p}\xi(h^{-1}g) \\&=\overline{\nai{h}{p}}u(g^{-1},h)\xi(h^{-1}g)\\&= (\overline{\nai{h}{p}}\lambda^u_\ell(h)\xi)(g).
\end{align*}
Thus, $\hat{\alpha}_p=\text{Ad}\mu(p)$ defines a continuous action of $\hat{G}$ on $M \rtimes_{\alpha,u}^{\ell}G$ satisfying 
\begin{align*}
    \begin{cases}        \hat{\alpha}_p(I_\ell^\alpha(x))=I_\ell^\alpha(x),\quad (x\in M) \\
    \hat{\alpha}_p(\lambda_\ell^u(h))=\overline{\nai{h}{p}}\lambda_\ell^u(h) \quad (h\in G)
    \end{cases}
\end{align*}
for all $p \in \hat{G}$.

\begin{definition}\label{defdualaction}
    We say that the continuous action of $\hat{G}$ defined as above is the action of $\hat{G}$ dual to $(\alpha,u)$. We denote $\hat{\alpha}$. 
\end{definition}

Now, we show the duality for the action dual to a cocycle action.

\begin{lemma}[{\cite[Lemma 1]{NakagamiSutherland1979}}]\label{dualaction}
    For a von Neumann algebra $M$ acting on a Hilbert space $\mathcal{H}$, a locally compact abelian group $G$ and a cocycle action $(\alpha,u)\colon G\curvearrowright M$, if $\beta$ is defined on $M\rtimes_{\alpha,u}^{r}G$ by $$\beta(y)=\text{Ad}(1\otimes W^*)(y\otimes 1_G) \in \B(\mathcal{H})\vNtensor\B(L^2(G))\vNtensor\B(L^2(G)),$$ then $\beta$ is a dual action of $G$ on $M\rtimes_{\alpha,u}^{r}G$. Note that $y\in M\rtimes_{\alpha,u}^{r}G \subseteq \B(L^2(G,\mathcal{H})) \cong \B(\mathcal{H}\otimes L^2(G))=\B(\mathcal{H})\vNtensor\B(L^2(G))$. Here, the unitary $W$ on $L^2(G) \otimes L^2(G)$ is defined by $$(W\xi)(s,t)=\xi(s,ts)\,\,(\xi\in L^2(G) \otimes L^2(G)).$$ 
\end{lemma}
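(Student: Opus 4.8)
The plan is to verify the three defining properties of a dual action: that $\beta$ is a normal injective $*$-homomorphism, that its range lies in $N\vNtensor R(G)$ (where I write $N:=M\rtimes_{\alpha,u}^{r}G$), and that the coaction identity $(\beta\otimes\iota)\circ\beta=(\iota\otimes\gamma)\circ\beta$ holds. Since $\beta=\mathrm{Ad}(1\otimes W^*)\circ(\,\cdot\otimes 1_G)$ is the composition of the amplification $y\mapsto y\otimes 1_G$ (normal and injective) with conjugation by the unitary $1\otimes W^*$ (normal and injective), $\beta$ is automatically a normal injective $*$-homomorphism from $\B(\mathcal{H})\vNtensor\B(L^2(G))$ into $\B(\mathcal{H})\vNtensor\B(L^2(G))\vNtensor\B(L^2(G))$. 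Everything therefore reduces to computing $\beta$ on the two generating families $I_r^\alpha(x)$ $(x\in M)$ and $\lambda_r^u(h)$ $(h\in G)$ of $N$, because $\beta$, being a normal isomorphism onto its image, carries the von Neumann algebra generated by these operators onto the one generated by their images.

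First I would record the explicit form of $W^*$. Identifying $\mathcal{H}\otimes L^2(G)\otimes L^2(G)$ with $L^2(G\times G,\mathcal{H})$ in variables $(s,t)$, the map $(s,t)\mapsto(s,ts)$ is measure preserving because $G$ is abelian (so the Haar measure is bi-invariant and $\Delta_G\equiv 1$); hence $W$ is unitary with $((1\otimes W)\zeta)(s,t)=\zeta(s,ts)$ and $((1\otimes W^*)\zeta)(s,t)=\zeta(s,ts^{-1})$. A direct three-step substitution — apply $1\otimes W$, then the amplified generator acting in the $s$-variable, then $1\otimes W^*$ — then yields, for all $\zeta\in L^2(G\times G,\mathcal{H})$,
\begin{align*}
(\beta(I_r^\alpha(x))\zeta)(s,t)&=\alpha_s(x)\zeta(s,t),\\
(\beta(\lambda_r^u(h))\zeta)(s,t)&=u(s,h)\zeta(sh,th).
\end{align*}
In operator form these read $\beta(I_r^\alpha(x))=I_r^\alpha(x)\otimes 1$ and $\beta(\lambda_r^u(h))=\lambda_r^u(h)\otimes\rho_h$, where $\rho_h\in R(G)$ is the right regular representation $(\rho_h\xi)(t)=\xi(th)$ on the third leg. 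The cancellations producing these clean formulas — in particular the survival of the cocycle factor $u(s,h)$ and the exact collapse of the two $W$-substitutions into a single right translation by $h$ in the $t$-variable — are the crux of the argument and the step most prone to bookkeeping error.

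Once these formulas are in hand the remaining claims follow quickly. Both $I_r^\alpha(x)\otimes 1$ and $\lambda_r^u(h)\otimes\rho_h$ lie in $N\vNtensor R(G)$, so by normality $\beta(N)\subseteq N\vNtensor R(G)$, which is the required inclusion, and in particular $\beta\otimes\iota$ and $\iota\otimes\gamma$ are both defined on $\beta(N)$ with values in $N\vNtensor R(G)\vNtensor R(G)$. For the coaction identity it suffices to check it on generators, since both sides are normal $*$-homomorphisms: on $I_r^\alpha(x)$ both sides equal $I_r^\alpha(x)\otimes 1\otimes 1$ (using $\gamma(1)=1\otimes 1$), while on $\lambda_r^u(h)$ one computes
$$(\beta\otimes\iota)(\beta(\lambda_r^u(h)))=\lambda_r^u(h)\otimes\rho_h\otimes\rho_h=(\iota\otimes\gamma)(\beta(\lambda_r^u(h))),$$
using $\beta(\lambda_r^u(h))=\lambda_r^u(h)\otimes\rho_h$ on the left and $\gamma(\rho_h)=\rho_h\otimes\rho_h$ on the right. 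This establishes $(\beta\otimes\iota)\circ\beta=(\iota\otimes\gamma)\circ\beta$ and completes the verification that $\beta$ is a dual action of $G$ on $N$.
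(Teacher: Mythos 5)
Your proposal is correct, but there is nothing in the paper to compare it against: the paper does not prove this lemma, it imports it verbatim from \cite[Lemma 1]{NakagamiSutherland1979}. Your argument is thus a self-contained substitute for that citation, and it checks out. The three-step conjugation computation is right: $((1\otimes W^*)\zeta)(s,t)=\zeta(s,ts^{-1})$, and for $\lambda_r^u(h)$ the substitutions give $u(s,h)\zeta(sh,ts^{-1}\cdot sh)=u(s,h)\zeta(sh,th)$, where commutativity of $G$ is exactly what collapses the two $W$-substitutions into a single right translation by $h$ in the second variable. The resulting generator formulas $\beta(I_r^\alpha(x))=I_r^\alpha(x)\otimes 1$ and $\beta(\lambda_r^u(h))=\lambda_r^u(h)\otimes\rho_h$ (your $\rho_h$ is the paper's $\lambda_r(h)$) coincide with the formulas the paper later quotes from Nakagami--Sutherland in the proof of Proposition \ref{cocycleduality}, which corroborates your computation and means your proof plugs seamlessly into the paper's subsequent use of the lemma. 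Your reductions to generators are also legitimate: $\beta$ is normal and injective by its very form $\mathrm{Ad}(1\otimes W^*)\circ(\,\cdot\otimes 1_G)$, a normal injective $*$-homomorphism maps the von Neumann algebra generated by a set onto the von Neumann algebra generated by its image, and two normal $*$-homomorphisms agreeing on a generating set agree on the generated algebra by Kaplansky density; this settles both the range inclusion $\beta(N)\subseteq N\vNtensor R(G)$ and the coaction identity $(\beta\otimes\iota)\circ\beta=(\iota\otimes\gamma)\circ\beta$. One cosmetic remark: unitarity of $W$ does not actually require abelianness, since right translations preserve the right Haar measure in any locally compact group; what abelianness buys is the bi-invariance you invoke and, more importantly, the cancellation $ts^{-1}\cdot sh=th$ in the crux computation.
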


We provide the following lemmas. The first lemma is a minor variation of Tietze's extension theorem (e.g., \cite[4.34]{FollandrealanalysisMR1681462}). We include the proof for completeness.

\begin{lemma}\label{TietzeforLCH}
    For a locally compact Hausdorff space $X$, compact subset $K \subseteq X$ and open subset $K \subseteq U \subseteq X$, if $f\in C(K)$, then there exists $F \in C_c(X)$ such that $F|K=f$ and ${\rm supp}\,F \subseteq U$ hold. Moreover, we may arrange $F$ to satisfy $\|F\|_\infty \le 2$ if $\|f\|_\infty \le 1$.  
\end{lemma}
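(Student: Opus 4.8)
The plan is to prove this as a two-stage application of the classical Urysohn and Tietze theorems, adapted to the locally compact Hausdorff (not-necessarily-normal) setting. The statement asks for three things simultaneously: an extension $F$ of $f \in C(K)$ to all of $X$, compact support of $F$, containment $\supp F \subseteq U$, and the norm bound $\|F\|_\infty \le 2$ when $\|f\|_\infty \le 1$. The key obstacle is that $X$ is only locally compact Hausdorff, so it need not be normal, and Tietze's theorem applies directly only to normal (or compact Hausdorff) spaces. The standard remedy is to first trap $K$ inside a compact ``buffer'' neighborhood sitting inside $U$, and then do all the extension work inside that compact set, where normality is automatic.

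Concretely, first I would use local compactness to find an open set $V$ with compact closure $\overline{V}$ satisfying $K \subseteq V \subseteq \overline{V} \subseteq U$; this uses the standard fact (valid in any locally compact Hausdorff space) that a compact set inside an open set admits a precompact open neighborhood whose closure is still inside the open set. Now $\overline{V}$ is a compact Hausdorff space, hence normal. Next I would apply the ordinary Tietze extension theorem on $\overline{V}$ to extend $f \in C(K)$ to a continuous function $g \in C(\overline{V})$ with $\|g\|_\infty = \|f\|_\infty$ (Tietze preserves the sup-norm). Separately, I would apply Urysohn's lemma on the compact Hausdorff space $\overline{V}$ (or directly on $X$) to produce a continuous bump function $\eta \colon X \to [0,1]$ with $\eta \equiv 1$ on $K$ and $\supp \eta \subseteq V$; such an $\eta$ exists by the locally compact version of Urysohn's lemma.

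Then I would define $F = \eta \cdot \tilde{g}$, where $\tilde{g}$ is any bounded continuous extension of $g$ off $\overline{V}$ — but more cleanly, I would simply set $F(x) = \eta(x) g(x)$ on $\overline{V}$ and $F(x) = 0$ outside $V$, checking continuity on the overlap where $\eta = 0$. Since $\eta$ is supported in $V \subseteq \overline{V}$, the product is well-defined, continuous on all of $X$, vanishes outside the compact set $\overline{V}$ (so $F \in C_c(X)$ with $\supp F \subseteq \overline{V} \subseteq U$), and agrees with $f$ on $K$ because $\eta \equiv 1$ there. For the norm bound, the naive estimate gives $\|F\|_\infty \le \|\eta\|_\infty \|g\|_\infty \le 1 \cdot \|f\|_\infty \le 1$ when $\|f\|_\infty \le 1$, which is in fact better than the required bound of $2$; the slack factor of $2$ in the statement is presumably there to allow a less careful construction (e.g., one involving truncation of a real-and-imaginary-part argument for complex $f$, or a version of Tietze that only guarantees $\|g\|_\infty \le 2\|f\|_\infty$).

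The main obstacle is genuinely just the non-normality of $X$, and the buffer-set trick dissolves it; everything after that is a routine gluing and sup-norm bookkeeping. For \emph{complex}-valued $f$ one should note that Tietze applies componentwise to real and imaginary parts, and that the sup-norm bound transfers up to the stated constant: extending $\operatorname{Re} f$ and $\operatorname{Im} f$ separately and recombining gives $\|g\|_\infty \le \sqrt{2}\,\|f\|_\infty$, still comfortably under the factor $2$, or one can extend the modulus and phase more carefully to get the sharp bound. The only point requiring a moment's care is verifying continuity of $F$ at boundary points of $\supp \eta$, which follows since $\eta$ (and hence $F$) vanishes in a neighborhood of any such point lying outside $\overline{V}$.
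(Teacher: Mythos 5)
Your proposal is correct and follows essentially the same route as the paper's proof: trap $K$ in a precompact open set $V$ with $\overline{V}\subseteq U$, apply Tietze on the compact (hence normal) set $\overline{V}$, and cut off by an Urysohn bump to get continuity of the extension-by-zero. The only difference is cosmetic: the paper handles real and imaginary parts separately (which is where its factor $2$ comes from), whereas your direct treatment of complex-valued $f$ yields the sharper bound $\|F\|_\infty\le\|f\|_\infty$, which of course still satisfies the stated inequality.
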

\begin{proof}
    First, we consider the case $f\in C(K,[0,1])$. By the regularity of $X$, there exists a relatively compact open subset $V \subseteq X$ such that $K \subseteq V \subseteq \overline{V} \subseteq U$. We obtain $\tilde{f} \in C(\overline{V},[0,1])$ such that $\tilde{f}|K=f$ by Tietze extension theorem. If $V=\overline{V}$, then the zero extension of $\tilde{f}$, we denote $F_0$, satisfies the desired properties. If $V \subsetneq \overline{V}$, then there exists $g\in C(X,[0,1])$ such that $g|K=1$ and $g|\overline{V}\backslash V =0$ by Urysohn's lemma. In this case, $F_0\cdot g$ satisfies the desired properties. Next, we consider the case where $f$ is real valued. Since $K$ is compact, there exist $a,b\in \R$ such that $f\in C(K,[a,b])$. Then, using the homeomorphism $[0,1]\ni t \mapsto (b-a)t+a \in[a,b]$, we may find $F\in C_c(X,[a,b])$ with the desired properties. Finally, if $f$ is complex valued, then we obtain $F_1,F_2\in C_c(X,\R)$ that satisfies $F_1|K=\text{Re}f,F_2|K=\text{Im}f$ and ${\rm supp}\,F_i \subseteq U\,\,(i=1,2)$. In this case, $F=F_1+iF_2$ satisfies the desired properties. Moreover, if $\|f\|_\infty \le 1$, then $\|\text{Re}f\|_\infty,\|\text{Im}f\|_\infty \le 1$ hold. The previous argument implies that $\|F_1\|_\infty,\|F_2\|_\infty \le 1$. Therefore, $\|F\|_\infty \le \|F_1\|_\infty +  \|F_2\|_\infty\le 2$ holds.
\end{proof}

\begin{lemma}\label{dualgrpdensity}
    Let $G$ be a locally compact abelian group. We have $m(\hat{G})''=m(L^\infty(G)) \subseteq \B(L^2(G))$, where $m(f)$ denotes the multiplication operator.
\end{lemma}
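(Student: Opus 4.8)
The plan is to prove the two inclusions separately. The inclusion $m(\hat G)''\subseteq m(L^\infty(G))$ is immediate: for each $p\in\hat G$ the character $g\mapsto\nai{g}{p}$ has modulus one, hence belongs to $L^\infty(G)$, so $m(p)\in m(L^\infty(G))$; since the algebra $m(L^\infty(G))$ of all multiplication operators on $L^2(G)$ is a von Neumann algebra (in particular weak-operator closed), it contains the von Neumann algebra generated by $m(\hat G)$.

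For the reverse inclusion $m(L^\infty(G))\subseteq m(\hat G)''$, let $S\subseteq L^\infty(G)$ be the linear span of the characters $\{\nai{\cdot}{p}:p\in\hat G\}$, so that $m(S)=\operatorname{span} m(\hat G)\subseteq m(\hat G)''$. The crux of the argument is to show that $S$ is weak-$*$ dense in $L^\infty(G)=(L^1(G))^*$. By the bipolar theorem this reduces to showing that the pre-annihilator $S_\perp=\{h\in L^1(G):\int_G \nai{g}{p}\,h(g)\,d\mu_G(g)=0\text{ for all }p\in\hat G\}$ is trivial. For $h\in L^1(G)$ the quantity $\int_G\nai{g}{p}h(g)\,d\mu_G(g)$ is, up to the choice of Fourier convention, the value of the Fourier transform $\hat h$ at a point of $\hat G$ determined by $p$; thus $h\in S_\perp$ forces $\hat h\equiv 0$, and the injectivity of the Fourier transform $L^1(G)\to C_0(\hat G)$ gives $h=0$. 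This classical input is where the real content lies, and I expect it to be the main obstacle: everything surrounding it is soft functional analysis, while the triviality of $S_\perp$ genuinely depends on invoking Fourier uniqueness correctly.

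It then remains to transfer this density through $m$. The map $m\colon L^\infty(G)\to\B(L^2(G))$ is normal, i.e.\ continuous from the weak-$*$ topology $\sigma(L^\infty,L^1)$ to the weak operator topology, since for $\xi,\eta\in L^2(G)$ the vector functional $x\mapsto\nai{m(x)\xi}{\eta}_2=\int_G x\,\xi\overline{\eta}\,d\mu_G$ is obtained by pairing against $\xi\overline{\eta}\in L^1(G)$ (here $\xi\overline{\eta}\in L^1$ by Cauchy--Schwarz). Consequently $m$ carries the weak-$*$ closure of $S$ into the weak-operator closure of $m(S)$, so that $m(L^\infty(G))=m(\overline{S}^{\,w*})\subseteq\overline{m(S)}^{\,\mathrm{WOT}}\subseteq m(\hat G)''$, the final inclusion because $m(\hat G)''$ is weak-operator closed. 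Combining the two inclusions gives $m(\hat G)''=m(L^\infty(G))$.

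An alternative and equally short route avoids Fourier inversion: one regards $m(\hat G)''$ as a von Neumann subalgebra of the abelian algebra $m(L^\infty(G))\cong L^\infty(G)$, and shows via spectral projections that the sub-$\sigma$-algebra $\{E:m(1_E)\in m(\hat G)''\}$ contains every preimage $\nai{\cdot}{p}^{-1}(\text{Borel})$. Since $G$ is second countable (as assumed throughout $\S3$), a countable separating family of characters generates the full Borel $\sigma$-algebra, whence $m(L^\infty(G))\subseteq m(\hat G)''$ by weak-$*$ density of simple functions. Either way the essential ingredient is the separation/injectivity property of $\hat G$, which is the only nontrivial step.
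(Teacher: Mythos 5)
Your proof is correct, but it takes a genuinely different route from the paper. The paper argues in two stages: first it proves $m(C_0(G))''=m(L^\infty(G))$ by an explicit strong-operator approximation (inner/outer regularity of Radon measures, a Tietze-type extension lemma, and dominated convergence), and then it passes from $C_0(G)$ to the characters by invoking the norm density of the Fourier image $\hat{\mathcal{F}}(L^1(\hat{G}))$ in $C_0(G)$ together with the smearing identity $m(\hat{\mathcal{F}}f)=\int_{\hat{G}}f(p)\,m(p)\,d\mu_{\hat{G}}(p)\in m(\hat{G})''$. You instead work entirely on the predual side: the span $S$ of the characters is weak-$*$ dense in $L^\infty(G)=(L^1(G))^*$ because its preannihilator vanishes by the uniqueness theorem for the Fourier transform on $L^1(G)$, and the normality of $m$ (weak-$*$ to WOT continuity, via $\xi\overline{\eta}\in L^1(G)$) transports this density into $\overline{m(S)}^{\mathrm{WOT}}\subseteq m(\hat{G})''$. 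The two classical inputs are dual to each other — injectivity of $\mathcal{F}\colon L^1(G)\to C_0(\hat{G})$ versus density of $\hat{\mathcal{F}}(L^1(\hat{G}))$ in $C_0(G)$ — so the arguments are of comparable depth, but yours is shorter and avoids the paper's auxiliary approximation machinery (its Tietze lemma and the lengthy SOT estimate), at the cost of being less explicit about how elements of $m(\hat{G})''$ arise; the paper's route also produces the intermediate identity $m(C_0(G))''=m(L^\infty(G))$, which is in keeping with its self-contained style. Neither argument needs second countability; note only that your alternative sketch at the end (spectral projections of the $m(p)$ plus a countable separating family of characters generating the Borel $\sigma$-algebra) does require it, and is stated too loosely to stand on its own, but your main argument does not depend on it.
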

\begin{proof}
    First, we show that $m(C_0(G))''=m(L^\infty(G))$. Since $m(L^\infty(G))$ is a von Neumann algebra, it is clear that $m(C_0(G))'' \subseteq m(L^\infty(G))$. To show the converse inclusion, we shall show that $m(f)$ is in SOT-closure of the set $\{m(g):g\in C_c(G)\}$. Take $f \in L^\infty(G)$. Fix an arbitrary SOT-neighborhood of $m(f)$, that is, fix $\varepsilon>0,\xi_1,\ldots,\xi_n\in L^2(G)$ and consider the set $$\mathcal{N}(m(f);\xi_1,\ldots ,\xi_n,\varepsilon)=\{T\in \B(L^2(G)): \|m(f)\xi_i-T\xi_i\|_2<\varepsilon\,\,(i=1,\ldots,n)\}.$$ It suffices to show that there exists $g\in C_c(G)$ such that $m(g) \in \mathcal{N}(m(f);\xi_1,\ldots ,\xi_n,\varepsilon)$. We may assume that $\|f\|_\infty=1$. Since $\mu_G$ is a Radon measure, $d\nu_i=|\xi_i|^2d\mu_G\,\,(i=1,\ldots,n)$ is a finite Radon measure on $G$ (see, e.g. \cite[P220, Exercise 7.8]{FollandrealanalysisMR1681462}). Then, by the inner regularity, for each $i=1,\ldots,n$, there exists a compact set $K_i$ such that $\nu_i(G\backslash K_i) < \varepsilon^2/50$. Let $K=\bigcup_{i=1}^nK_i$. We see that $K$ is compact. Since $\mu_G|K$ is a finite Radon measure on $K$, $C_c(K)$ is dense in $L^1(K,\mu_G|K)$. Therefore, since $f|K \in L^1(K,\mu_G|K)$, there exists a sequence  $(g_m)_{m\in \N} \subseteq C_c(K)$ such that $\int_K|g_m-f|\,d\mu_G \to 0$ as $m\to \infty$ and $|g_m(x)|\le2|f(x)|$ for all $m\in\N$ and $\mu_G$-a.e. $x\in K$. Then, by passing to a subsequence, we may assume $|g_m(x)|\to|f(x)|$ as $m\to\infty$ $\mu_G$-a.e. $x\in K$. For each $i=1,\ldots,n$, we have $|g_m(x)-f(x)|^2 |\xi_i(x)|^2\le 9|\xi_i(x)|^2 $ and $|\xi_i|^2\in L^1(K,\mu_G|K)$. Therefore, by Lebesgue dominated convergence theorem, we have 
    $$\int_K|g_m-f|^2|\xi_i|^2\,d\mu_G \to 0\,\, \text{as}\,\, m \to\infty\,\, (i=1,\ldots,n). $$ It follows that there exists $N\in \N$ such that $\int_K|g_N-f|^2|\xi_i|^2\,d\mu_G < \varepsilon^2/2$ for all $i=1,\ldots,n$. By Lemma \ref{TietzeforLCH}, we obtain $g\in C_c(G)$ such that $g|K=g_N$. Since $\|g_N\|_\infty \le2$, the proof of Lemma \ref{TietzeforLCH} implies that $\|g\|_\infty \le4$. For $i=1,\ldots,n$, we have 
    $$\int_{G\backslash K}|g-f|^2|\xi_i|^2\,d\mu_G \le 25 \int_{G \backslash K}|\xi_i|^2\,d\mu_G < \varepsilon^2/2.$$
    Thus, we obtain 
    \begin{align*}
        \|m(g)\xi_i-m(f)\xi_i\|_2^2&=\int_G|g-f|^2|\xi_i|^2 \,d\mu_G \\&=\int_K|g-f|^2|\xi_i|^2\,d\mu_G+\int_{G\backslash K}|g-f|^2|\xi_i|^2\,d\mu_G
        \\&< \frac{\varepsilon^2}{2} + \frac{\varepsilon^2}{2} =\varepsilon^2\,\,(i=1,\ldots,n).
    \end{align*}
    We conclude that $m(g) \in \mathcal{N}(m(f);\xi_1,\ldots ,\xi_n,\varepsilon)$ and hence $m(f) \in m(C_c(G))''\subseteq m(C_0(G))''$. \par
    Next, we shall show that $m(\hat{G})''=m(L^\infty(G))$, in particular $m(L^\infty(G))\subseteq m(\hat{G})''$. We identify $G$ with the dual group of $\hat{G}$, let $\hat{\mathcal{F}}\colon L^1(\hat{G})\to C_0(G)$ be the Fourier transform given by $$(\hat{\mathcal{F}}f)(x)=\int_{\hat{G}} \nai{x}{p}f(p)\,d\mu_{\hat{G}}(p)\,\,(f\in L^1(\hat{G})).$$ Then, it follows that $\hat{\mathcal{F}}(L^1(\hat{G}))$ is norm dense in $C_0(G)$ and hence $m(C_0(G))'' = m(\hat{\mathcal{F}}(L^1(\hat{G})))''$. We see that $$m(\hat{\mathcal{F}}f)=\int_{\hat{G}}f(p)m(p)\,d\mu_{\hat{G}}(p)\in m(\hat{G})''\,\,(f\in L^1(\hat{G})).$$ Thus, we conclude that $$m(L^\infty(G))=m(C_0(G))'' = m(\hat{\mathcal{F}}(L^1(\hat{G})))''\subseteq m(\hat{G})''.$$
\end{proof}

Now, we provide the duality theorem for the twisted crossed product. In fact, this was shown in \cite{VaesVainerman2003} in the context of quantum groups, and we learned about this result from Professor Reiji Tomatsu. However, we give the von Neumann algebraic proof for reader's convenience.

\begin{proposition}\label{cocycleduality}
    For a von Neumann algebra $M$ acting on a Hilbert space $\mathcal{H}$, a locally compact abelian group $G$ and a cocycle action $(\alpha,u)\colon G\curvearrowright M$, we have $$M\rtimes_{\alpha,u}^{\ell}G\rtimes_{\hat{\alpha}}\hat{G}\cong M \vNtensor \B(L^2(G)).$$
\end{proposition}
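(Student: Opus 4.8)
The plan is to compute the generators of the iterated crossed product $N\rtimes_{\hat\alpha}\hat G$, where $N:=M\rtimes_{\alpha,u}^{\ell}G$, and then transport them by a sequence of unitaries onto the standard generators of $M\vNtensor\B(L^2(G))$. Since $\hat\alpha$ is an honest action of $\hat G$ (not merely a cocycle action), $N\rtimes_{\hat\alpha}\hat G$ acts on $L^2(\hat G,L^2(G,\cH))\cong L^2(G,\cH)\otimes L^2(\hat G)$ and is generated by $I_\ell^{\hat\alpha}(y)$ ($y\in N$), with $(I_\ell^{\hat\alpha}(y)\zeta)(p)=\hat\alpha_{p^{-1}}(y)\zeta(p)$, together with the canonical unitaries $1\otimes\lambda_{\hat G}(p)$, where $\lambda_{\hat G}$ is the left regular representation of $\hat G$. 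The observation that does all the initial work is that, by Definition \ref{defdualaction}, $\hat\alpha$ is \emph{implemented} by the unitary representation $\mu$ of $\hat G$, i.e. $\hat\alpha_p=\Ad\mu(p)$ with $(\mu(p)\xi)(g)=\overline{\nai{g}{p}}\xi(g)$.

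First I would untwist the dual action by the unitary $U$ on $L^2(G,\cH)\otimes L^2(\hat G)$, $(U\zeta)(p)=\mu(p)\zeta(p)$. Using that $\mu$ is a genuine representation, a direct computation gives $U\,I_\ell^{\hat\alpha}(y)\,U^{*}=y\otimes 1$ for $y\in N$ and $U\,(1\otimes\lambda_{\hat G}(p))\,U^{*}=\mu(p)\otimes\lambda_{\hat G}(p)$. Hence the iterated crossed product is spatially isomorphic to
$$Q:=\bigl(N\otimes 1\;\cup\;\{\mu(p)\otimes\lambda_{\hat G}(p):p\in\hat G\}\bigr)''\subseteq \B\bigl(L^2(G,\cH)\otimes L^2(\hat G)\bigr).$$
Since $N=\langle I_\ell^\alpha(M),\lambda_\ell^u(G)\rangle''\subseteq M\vNtensor\B(L^2(G))$ and $\mu(p)\otimes\lambda_{\hat G}(p)\in\C 1_\cH\vNtensor\B(L^2(G))\vNtensor\B(L^2(\hat G))$, we already have $Q\subseteq M\vNtensor\B(L^2(G)\otimes L^2(\hat G))$; the content is the reverse inclusion together with the identification of the surviving tensor factor as $M$.

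Next I would put the generators into a covariant normal form. Conjugating by the shear unitary $(W_0\zeta)(g,p)=\nai{g}{p}\zeta(g,p)$ fixes $I_\ell^\alpha(x)\otimes 1$, sends $\mu(p)\otimes\lambda_{\hat G}(p)$ to $1\otimes\lambda_{\hat G}(p)$, and sends $\lambda_\ell^u(h)\otimes 1$ to $\lambda_\ell^u(h)\otimes m_{\hat G}(\nai{h}{\cdot})$; the Plancherel identification $L^2(\hat G)\cong L^2(G)$ on the last leg then converts the dual-group unitaries into multiplications by the characters of $G$ and the character-multiplications into translations. This yields three families of generators on $\cH\otimes L^2(G)\otimes L^2(G)$: the twisted diagonal $I_\ell^\alpha(x)\otimes 1$, a covariant translation $\lambda_\ell^u(h)\otimes\lambda^{G}(h)$, and the multiplications $1\otimes 1\otimes m_G(\chi_p)$ by characters. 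By Lemma \ref{dualgrpdensity} the last family generates $1\otimes 1\otimes L^\infty(G)$, and together with the translations this produces $\B(L^2(G))$ on the new leg, by the standard fact that $L^\infty(G)$ and the left translations generate all of $\B(L^2(G))$. What remains is to show that the commutant of $Q$ collapses to an amplification of $M'$.

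The hard part will be this last step, for two reasons. Outer-ness of $\alpha$ means $I_\ell^\alpha$ admits no \emph{decomposable} implementing unitary, so one cannot straighten it to $\id_M\otimes 1$ fibrewise; instead I would argue that $I_\ell^\alpha$ is nevertheless unitarily equivalent to $\id_M\otimes 1$, since it is a normal faithful representation of $M$ quasi-equivalent to the identity with multiplicity $\dim L^2(G)$, whence $I_\ell^\alpha(M)'\cong M'\vNtensor\B(L^2(G))$. A covariance argument then shows that any element of $Q'$ is, in the normal form above, decomposable in the new variable with fibres in $I_\ell^\alpha(M)'$ tied together by $\Ad\lambda_\ell^u$, and that these constraints force $Q'$ to be (an amplification of) $M'$, i.e. $Q\cong M\vNtensor\B(L^2(G))$. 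The second difficulty is the $2$-cocycle $u$, which obstructs the clean untwisted computation; I would remove it, as in Sutherland's stabilization trick, by tensoring $M$ with a type I factor so that $u$ becomes a coboundary and $(\alpha,u)$ is replaced by an honest action, prove the duality there, and descend. Finally, since $G$ is second countable, $\B(L^2(G)\otimes L^2(\hat G))\cong\B(L^2(G))$, so the spurious extra $L^2(\hat G)$-leg introduced by the second crossed product disappears at the level of abstract isomorphism, yielding the stated $M\vNtensor\B(L^2(G))$.
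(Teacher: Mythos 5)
Your reduction to a normal form is correct and is essentially the paper's own computation in left-handed form: the paper conjugates by $1\otimes S\otimes\hat{\mathcal{F}}$ to send the generators of $M\rtimes_{\alpha,u}^{\ell}G\rtimes_{\hat{\alpha}}\hat{G}$ to $I_r^{\alpha}(x)\otimes 1_G$, $\lambda_r^u(g)\otimes\lambda_r(g)$ and $1\otimes 1\otimes m(q)$, and then invokes Lemma \ref{dualgrpdensity} exactly as you do; your untwisting unitary $U$ (legitimate, since $\hat{\alpha}_p=\Ad\mu(p)$), the shear $W_0$, and Plancherel on the last leg produce the same picture. The difference is what happens next: the paper recognizes this normal form as the Nakagami--Sutherland dual crossed product $M\rtimes_{\alpha,u}^{r}G\rtimes_{\beta}^{d}G$ of Lemma \ref{dualaction} and cites \cite{NakagamiSutherland1979} (Theorem 2) for the identification with $M\vNtensor\B(L^2(G))$, whereas you propose to prove that identification from scratch. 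That final step is where your proposal has genuine gaps, and it is precisely the hard content of the theorem, not a routine verification.

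Concretely: (i) your claim that $I_\ell^{\alpha}$ is unitarily equivalent to $\id_M\otimes 1$ because it is ``quasi-equivalent to the identity with multiplicity $\dim L^2(G)$'' is false in general, and quasi-equivalence does not give isomorphic commutants. Take $M=\C^2$ acting on $\C^3$ as $\mathrm{diag}(a,a,b)$ and let $G=\Z/2\Z$ act by the flip $\alpha(a,b)=(b,a)$ (trivial cocycle): then $I_\ell^{\alpha}(x)=x\oplus\alpha(x)$ has character multiplicities $(3,3)$ while $\id\otimes 1$ has $(4,2)$, so they are not unitarily equivalent, and $I_\ell^{\alpha}(M)'\cong M_3\oplus M_3$ while $M'\vNtensor\B(L^2(G))\cong M_4\oplus M_2$. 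Conjugating $I_\ell^{\alpha}$ to $\id\otimes 1$ by a decomposable unitary requires each $\alpha_g$ to be spatially implemented (e.g.\ $M$ in standard form), which you cannot assume for a von Neumann algebra given concretely on $\cH$. (ii) The assertion that the translations ``produce $\B(L^2(G))$ on the new leg'' fails as stated because they appear coupled as $\lambda_\ell^u(h)\otimes\lambda^G(h)$, not as $1\otimes\lambda^G(h)$; and the subsequent ``covariance argument'' forcing $Q'$ to collapse is only announced, never carried out --- it is the theorem itself. (iii) The stabilization-and-descent route does not close the argument: after tensoring with $\B(L^2(G))$ to trivialize $u$ and applying ordinary Takesaki duality, what you obtain is $\bigl(M\rtimes_{\alpha,u}^{\ell}G\rtimes_{\hat{\alpha}}\hat{G}\bigr)\vNtensor\B(L^2(G))\cong M\vNtensor\B(L^2(G))\vNtensor\B(L^2(G))$, i.e.\ a stable isomorphism, and cancellation of a type I tensor factor is not valid for von Neumann algebras (e.g.\ $\C\vNtensor\B(\ell^2)\cong\B(\ell^2)\vNtensor\B(\ell^2)$ while $\C\ncong\B(\ell^2)$); one would need to know beforehand that the iterated crossed product absorbs $\B(L^2(G))$, which is part of what is being proved. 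Your closing remark that the extra $L^2(\hat{G})$-leg ``disappears'' because $\B(L^2(G)\otimes L^2(\hat{G}))\cong\B(L^2(G))$ conflates the representing Hilbert space with the algebra acting on it. Either cite the twisted duality theorem of \cite{NakagamiSutherland1979} as the paper does, or reproduce its proof; the steps you sketch in its place do not substitute for it.
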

\begin{proof}
    Let $\beta$ be a dual action of $G$ defined in Lemma \ref{dualaction}. Since $M\rtimes_{\alpha,u}^{r}G\rtimes_\beta^{d}G\cong M \vNtensor\B(L^2(G))$ by \cite[Theorem 2]{NakagamiSutherland1979}, we shall show $M\rtimes_{\alpha,u}^{\ell}G\rtimes_{\hat{\alpha}}\hat{G}\cong M\rtimes_{\alpha,u}^{r}G\rtimes_\beta^{d}G$. Let $\mathcal{F}\colon L^2(G)\to L^2(\hat{G)}$ be the Fourier transform defined by $$(\mathcal{F}\xi)(p)=\int_{G}\overline{\nai{s}{p}}\xi(s)\,d\mu_G(s)\,\,(\xi\in L^2(G)).$$ Put $\hat{\mathcal{F}}=\mathcal{F}^*$ be the Fourier inverse transform. We also define unitary $S\colon L^2(G)\to L^2(G)$ by $(S\xi)(g)=\xi(g^{-1})$. Set $\Phi\colon \B(\mathcal{H})\vNtensor\B(L^2(G))\vNtensor\B(L^2(\hat{G}))\to \B(\mathcal{H})\vNtensor\B(L^2(G))\vNtensor\B(L^2(G))$ by $\Phi=\text{Ad}(1\otimes S \otimes \hat{\mathcal{F}})$. Note that $M\rtimes_{\alpha,u}^{\ell}G\rtimes_{\hat{\alpha}}\hat{G}$ is generated by operators $\pi_{\hat{\alpha}}(x), \lambda_{\hat{\alpha}}(p) (x\in M\rtimes_{\alpha,u}^{\ell}G,p\in \hat{G})$ defined by $$(\pi_{\hat{\alpha}}(x)\xi)(g,p)=((\hat{\alpha}_p^{-1}(x)\otimes1_{\hat{G}})\xi)(g,p),(\lambda_{\hat{\alpha}}(q)\xi)(g,p)=\xi(g,p-q)$$ where $\xi\in\mathcal{H}\otimes L^2(G) \otimes L^2(\hat{G}),g\in G, p,q \in \hat{G}$. Then, we can calculate, for $x\in M$, 
    \begin{align*}
        (\Phi(\pi_{\hat{\alpha}}(I_\ell^{\alpha}(x)))\xi)(s,t)&=((1\otimes S \otimes \hat{\mathcal{F}})(\pi_{\hat{\alpha}}(I_\ell^{\alpha}(x)))(1\otimes S^*\otimes \mathcal{F})\xi)(s,t) \\&=\int_{\hat{G}}\nai{t}{p}(\pi_{\hat{\alpha}}(I_\ell^{\alpha}(x))(1\otimes S^*\otimes \mathcal{F})\xi)(s^{-1},p) \,d\mu_{\hat{G}}(p) \\&= \int_{\hat{G}}\nai{t}{p}((\hat{\alpha}_p^{-1}(I_\ell^{\alpha}(x))\otimes1_{\hat{G}})(1\otimes S^* \otimes\mathcal{F})\xi)(s^{-1},p)\, d\mu_{\hat{G}}(p) \\&= \int_{\hat{G}}\nai{t}{p}((I_\ell^{\alpha}(x)\otimes 1_{\hat{G}})(1\otimes S^* \otimes\mathcal{F})\xi)(s^{-1},p)\, d\mu_{\hat{G}}(p) \\&= \int_{\hat{G}}\nai{t}{p}\alpha_{s}(x)((1\otimes S^* \otimes\mathcal{F})\xi)(s^{-1},p)\, d\mu_{\hat{G}}(p) \\&= \alpha_{s}(x)\int_{\hat{G}}\nai{t}{p}((1\otimes S^* \otimes\mathcal{F})\xi)(s^{-1},p)\, d\mu_{\hat{G}}(p) \\&= \alpha_{s}(x)((1\otimes S\otimes \hat{\mathcal{F}})(1\otimes S^* \otimes\mathcal{F})\xi)(s,t) \\&= \alpha_s(x)\xi(s,t)\\&= ((I_r^{\alpha}(x)\otimes 1_{G})\xi)(s,t).  
    \end{align*}
    For $g \in G$, 
    \begin{align*}
         (\Phi(\pi_{\hat{\alpha}}(\lambda_\ell^u(g)))\xi)(s,t) 
         &= \int_{\hat{G}} \nai{t}{p}(\pi_{\hat{\alpha}}(\lambda_\ell^u(g))(1\otimes S^*\otimes \mathcal{F})\xi)(s^{-1},p)\,d\mu_{\hat{G}}(p) 
         \\&= \int_{\hat{G}} \nai{t}{p}((\hat{\alpha}_p^{-1}(\lambda_\ell^u(g))\otimes1_{\hat{G}})(1\otimes S^*\otimes \mathcal{F})\xi)(s^{-1},p)\,d\mu_{\hat{G}}(p) 
         \\&= \int_{\hat{G}}\nai{t}{p}\nai{g}{p}u(s,g)((1\otimes S^* \otimes \mathcal{F})\xi)(g^{-1}s^{-1},p)\,d\mu_{\hat{G}}(p) 
         \\&= u(s,g) \int_{\hat{G}}\nai{tg}{p}((1\otimes S^* \otimes \mathcal{F})\xi)(g^{-1}s^{-1},p)\,d\mu_{\hat{G}}(p) 
         \\&= u(s,g)((1\otimes S \otimes \hat{\mathcal{F}})(1\otimes S^* \otimes \mathcal{F})\xi)(sg,tg)
         \\&=u(s,g)\xi(sg,tg)
         \\&=((\lambda_r^u(g)\otimes \lambda_r(g))\xi)(s,t),
    \end{align*}
    where $\lambda_r$ stands for the right regular representation. Furthermore, for $q\in \hat{G}$, 
    \begin{align*}
        (\Phi(\lambda_{\hat{\alpha}}(q))\xi)(s,t) 
        &= \int_{\hat{G}} \nai{t}{p}(\lambda_{\hat{\alpha}}(q)(1\otimes S^*\otimes \mathcal{F})\xi)(s^{-1},p)\,d\mu_{\hat{G}}(p)
        \\&= \int_{\hat{G}} \nai{t}{p}((1\otimes S^*\otimes \mathcal{F})\xi)(s^{-1},p-q)\,d\mu_{\hat{G}}(p)
        \\&=\int_{\hat{G}} \nai{t}{p+q}((1\otimes S^*\otimes \mathcal{F})\xi)(s^{-1},p)\,d\mu_{\hat{G}}(p) 
        \\&= \nai{t}{q}\int_{\hat{G}} \nai{t}{p}((1\otimes S^*\otimes \mathcal{F})\xi)(s^{-1},p)\,d\mu_{\hat{G}}(p) 
        \\&= \nai{t}{q} ((1\otimes S\otimes \hat{\mathcal{F}})(1\otimes S^*\otimes \mathcal{F})\xi)(s,t) 
        \\&=\nai{t}{q}\xi(s,t)
        \\&=((1\otimes 1_G \otimes m(q))\xi)(s,t),
    \end{align*}
    where $m\colon L^{\infty}(G)\to \B(L^2(G));(m(f)\xi)(t)=f(t)\xi(t)$. Therefore, we conclude 
    \begin{align*}
        \begin{cases}
            \Phi(\pi_{\hat{\alpha}}(I_\ell^{\alpha}(x)))=I_r^{\alpha}(x)\otimes1_G \\
            \Phi(\pi_{\hat{\alpha}}(\lambda_\ell^{u}(g)))=\lambda_r^u(g)\otimes \lambda_r(g) \\
            \Phi(\lambda_{\hat{\alpha}}(q))=1\otimes1_G\otimes m(q),
        \end{cases}
    \end{align*}
    where $x \in M, g\in G, q\in \hat{G}$. By \cite[Lemma 1]{NakagamiSutherland1979}, we see that 
    \begin{align*}
        \begin{cases}
            \beta(I_r^{\alpha}(x))=I_r^{\alpha}(x) \otimes 1_G\\
            \beta(\lambda_r^{u}(g))=\lambda_r^{u}(g)\otimes\lambda_r(g),
        \end{cases}
    \end{align*}
    where $x\in M, g \in G$. By Lemma \ref{dualgrpdensity}, we conclude that $\Phi(M\rtimes_{\alpha,u}^\ell G \rtimes_{\hat{\alpha}}\hat{G})$ generates $M \rtimes_{\alpha,u}^rG \rtimes_\beta^dG$. Hence, we have $$M\rtimes_{\alpha,u}^\ell G \rtimes_{\hat{\alpha}}\hat{G}\cong M \rtimes_{\alpha,u}^rG \rtimes_\beta^d G \cong M \vNtensor\B(L^2(G)).$$
    In particular, $M\rtimes_{\alpha,u}^\ell G \rtimes_{\hat{\alpha}}\hat{G}$ and $M \rtimes_{\alpha,u}^rG \rtimes_\beta^dG$ are spatially isomorphic.
\end{proof}

\begin{remark}\label{remark1}
    Proposition \ref{cocycleduality} might be known to experts, but for the clarity of the presentation we included a detailed proof. Moreover, $\hat{\alpha}$ of Proposition \ref{cocycleduality} is employed as the point modular extension within \cite[Theorem 5.1]{guinto2025unimodulargroups} (see also Proposition \ref{fullspectrum}).   
\end{remark}

 We introduce the action $\tilde{\alpha}$ of $G$ on $M \vNtensor \B(L^2(G))$. Let $\tilde{\lambda^u}(t)$ be the unitary on $\mathcal{H}\otimes L^2(G)$ defined by $$(\tilde{\lambda^u}(t)\xi)(s)=u(t,t^{-1}s)^*\xi(t^{-1}s)$$ where $\xi\in \mathcal{H}\otimes L^2(G),t,s\in G$. Then, we see that $$(\tilde{\lambda^u}(t)^*\xi)(s)=u(t,s)\xi(ts).$$ Now, we define the action $\tilde{\alpha}$ by $$\tilde{\alpha}_t=\text{Ad}\tilde{\lambda^u}(t)\circ(\alpha_t \otimes \iota)\,\, (t\in G).$$
We can check $\tilde{\alpha}$ is a well defined action, i.e. $\tilde{\alpha}_s\tilde{\alpha}_t=\tilde{\alpha}_{st}$ for all $s,t\in G$. We refer to \cite[Corollary 3]{NakagamiSutherland1979} and \cite[Theorem 2.1]{Nakagami1977} for details.

Now, we introduce the subgroup of $\hat{G}$ defined by the cocycle action. 

\begin{definition}\label{defoftwistspctrm}
    For a von Neumann algebra $M$ acting on a Hilbert space $\mathcal{H}$, a locally compact abelian group $G$ and a cocycle action $(\alpha,u)\colon G\curvearrowright M$, $\Gamma(\alpha,u)$ is defined as the kernel of the restriction of the action $\hat{\alpha}$ of $\hat{G}$ dual to $(\alpha,u)$ to the center $Z(M\rtimes_{\alpha,u}^\ell G)$ of the twisted crossed product $M\rtimes_{\alpha,u}^\ell G$. That is, 
    $$\Gamma(\alpha,u)=\{p\in \hat{G}: \hat{\alpha}_p(n)=n \,\,(n\in Z(M\rtimes_{\alpha,u}^\ell G))\}$$
\end{definition}

We give some definitions for the factoriality results of twisted crossed products.

\begin{definition}
    For a von Neumann algebra $M$ acting on a Hilbert space $\mathcal{H}$, a locally compact abelian group $G$ and a cocycle action $(\alpha,u)\colon G\curvearrowright M$, the fixed point algebra $M^{(\alpha,u)}$ is defined by $$M^{(\alpha,u)}=\{x\in M : \alpha_t(x)=x\,(t\in G)\}.$$ The equation $\alpha_s\alpha_t=\text{Ad}(u(s,t))\alpha_{st}$ implies that $$\text{Ad}(u(s,t))(x)=x\,\,(s,t\in G)$$ for all $x \in M^{(\alpha,u)}$ automatically. Clearly, $M^{(\alpha,u)}$ is a von Neumann subalgebra of $M$.
\end{definition}

Since $\alpha_t\in \text{Aut}(M)$ for all $t\in G$, we have $\alpha_t(Z(M))=Z(M) \,\, (t\in G)$. Thus, we can define central ergodicity of a cocycle action. 

\begin{definition}
    For a von Neumann algebra $M$ acting on a Hilbert space $\mathcal{H}$, a locally compact abelian group $G$ and a cocycle action $(\alpha,u)\colon G\curvearrowright M$, we say the cocycle action is centrally ergodic if $$Z(M)^{\alpha}=\{x\in Z(M):\alpha_t(x)=x\,\,(t\in G)\}=\C1_M.$$
\end{definition}

Before proving the factoriality characterization. We recall elementary facts about an isomorphism between von Neumann algebras. Let $M,N$ be von Neumann algebras and $\Phi\colon M\to N$ be an isomorphism. Then it is straightforward to see $\Phi(Z(M))=Z(N)$. Next, if some locally compact group $G$ acts on $M$ continuously by $\alpha\colon G\to \text{Aut}(M)$, then $G\ni g\mapsto \Phi\circ\alpha_g\circ\Phi^{-1}\in \text{Aut}(N)$ defines a continuous action of $G$ on $N$. Moreover, we see that $\Phi (M^{\alpha})=N^{\Phi\circ\alpha\circ\Phi^{-1}}$. 
\par
The following theorem is a twisted version of the Connes-Takesaki Theorem \cite[III Corollary 3.4]{ConnesTakesaki1977} (or \cite[XI Corollary 2.8]{TakesakiOA2}). As is pointed out by Professor Reiji Tomtasu, it can be shown using Sutherland's stabilization trick \cite[II Proposition 2.1.3]{Sutherland1980} and Connes--Takesaki's theorem. Here, we include the full proof for completeness. 

\begin{theorem}\label{twistConnesfactority}
    For a von Neumann algebra $M$ acting on a Hilbert space $\mathcal{H}$, a locally compact abelian group $G$ and a cocycle action $(\alpha,u)\colon G\curvearrowright M$, the following two conditions are equivalent.
    \begin{enumerate}
        \item[(i)] $\Gamma(\alpha,u)=\hat{G}$ and $(\alpha,u)\colon G\curvearrowright M$ is centrally ergodic.
        \item[(ii)] $M\rtimes_{\alpha,u}^\ell G$ is a factor.
    \end{enumerate}
\end{theorem}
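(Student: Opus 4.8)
The plan is to reduce the twisted situation to an honest (untwisted) one and then invoke the Connes--Takesaki factoriality criterion \cite[III Corollary 3.4]{ConnesTakesaki1977}. Write $N = M\rtimes_{\alpha,u}^\ell G$ and recall that $\hat\alpha$ is a genuine action of the abelian group $\hat G$ on $N$. I would first record the two elementary computations that make the invariants transparent. Using the covariance relation $\lambda_\ell^u(h)I_\ell^\alpha(x)\lambda_\ell^u(h)^* = I_\ell^\alpha(\alpha_h(x))$, an element $I_\ell^\alpha(x)$ is central in $N$ exactly when $x\in Z(M)$ and $\alpha_h(x)=x$ for all $h\in G$, so that
\[
Z(N)\cap I_\ell^\alpha(M) = I_\ell^\alpha\bigl(Z(M)^\alpha\bigr).
\]
Second, since $u(s,t)\in M^U$, the inner automorphisms $\mathrm{Ad}(u(s,t))$ fix $Z(M)$ pointwise; hence on the center $Z(M\vNtensor\B(L^2(G)))=Z(M)\otimes\C1$ the honest action $\tilde\alpha_t=\mathrm{Ad}\tilde{\lambda^u}(t)\circ(\alpha_t\otimes\iota)$ reduces to $\alpha_t$, so that $\tilde\alpha$ is centrally ergodic if and only if $(\alpha,u)$ is.

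Next I would set up the reduction. By Proposition \ref{cocycleduality} we have $N\rtimes_{\hat\alpha}\hat G \cong M\vNtensor\B(L^2(G))$. Applying Takesaki duality to the honest action $\hat\alpha$ of $\hat G$ on $N$ (using $\widehat{\hat G}=G$), together with the identification of the bidual action with $\tilde\alpha$ from \cite[Corollary 3]{NakagamiSutherland1979}, I obtain a spatial isomorphism
\[
(M\vNtensor\B(L^2(G)))\rtimes_{\tilde\alpha}G \;\cong\; N\vNtensor\B(L^2(\hat G)).
\]
Because tensoring with the type $\mathrm I$ factor $\B(L^2(\hat G))$ does not change the center, $N$ is a factor if and only if $(M\vNtensor\B(L^2(G)))\rtimes_{\tilde\alpha}G$ is. As $\tilde\alpha$ is an honest action of the abelian group $G$, I may apply \cite[III Corollary 3.4]{ConnesTakesaki1977}: this crossed product is a factor if and only if $\tilde\alpha$ is centrally ergodic and its Connes spectrum equals $\hat G$.

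It then remains to match the two invariants with those in the statement. Central ergodicity was matched above. For the spectrum I would use the Connes--Takesaki description of the Connes spectrum $\Gamma(\tilde\alpha)$ as the kernel of the dual action $\widehat{\tilde\alpha}$ restricted to the center of $(M\vNtensor\B(L^2(G)))\rtimes_{\tilde\alpha}G$. Transporting this center, which equals $Z(N)$, through the isomorphism above and verifying that $\widehat{\tilde\alpha}$ corresponds to $\hat\alpha$ on $Z(N)$, I would identify $\Gamma(\tilde\alpha)=\ker(\hat\alpha|_{Z(N)})=\Gamma(\alpha,u)$; hence $\Gamma(\tilde\alpha)=\hat G$ is equivalent to $\Gamma(\alpha,u)=\hat G$. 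Combining the three equivalences yields (i)$\iff$(ii).

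I expect the main obstacle to be the careful bookkeeping in the reduction: one must verify that, under the spatial isomorphism of Proposition \ref{cocycleduality} and the second (honest) duality, the dual action $\widehat{\tilde\alpha}$ really does correspond to $\hat\alpha$ after discarding the auxiliary $\B(L^2(\hat G))$-factor, so that the kernels on the respective centers coincide. A self-contained alternative that avoids the second duality is to prove directly the twisted Landstad identity $N^{\hat\alpha}=I_\ell^\alpha(M)$; granting this, the first computation gives $Z(N)^{\hat\alpha}=Z(N)\cap N^{\hat\alpha}=I_\ell^\alpha(Z(M)^\alpha)$, and then $\Gamma(\alpha,u)=\hat G$ says $Z(N)=Z(N)^{\hat\alpha}$ while central ergodicity says $Z(N)^{\hat\alpha}=\C1$, so both conditions hold precisely when $Z(N)=\C1$. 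The hard inclusion $N^{\hat\alpha}\subseteq I_\ell^\alpha(M)$ is exactly the twisted Takesaki duality, which is again most cleanly obtained from the stabilization trick.
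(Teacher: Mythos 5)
Your proposal is correct in outline, but it is not the paper's proof: it is precisely the alternative route that the paper, in the paragraph preceding the theorem, attributes to Tomatsu (Sutherland's stabilization trick followed by the original Connes--Takesaki criterion) and then declines to follow in favor of a self-contained argument. Write $N=M\rtimes_{\alpha,u}^{\ell}G$ and $N_r=M\rtimes_{\alpha,u}^{r}G$. The paper never invokes the Connes--Takesaki factoriality theorem nor any property of the Connes spectrum of an honest action. For (i)$\implies$(ii) it applies Proposition \ref{cocycleduality} once, combines the resulting isomorphism $\Psi\colon M\vNtensor\B(L^2(G))\to N\rtimes_{\hat{\alpha}}\hat{G}$ with the Nakagami--Sutherland identity $N_r=(M\vNtensor\B(L^2(G)))^{\tilde{\alpha}}$ (a Landstad-type theorem, their Corollary 4) and with the same computation you record, $\tilde{\alpha}_t(x\otimes 1)=\alpha_t(x)\otimes 1$ for $x\in Z(M)$, to obtain the chain $\Psi(Z(N_r))\subseteq\Psi\bigl(Z(M\vNtensor\B(L^2(G)))^{\tilde{\alpha}}\bigr)=\C$; the hypothesis $\Gamma(\alpha,u)=\hat{G}$ enters only to place $\pi_{\hat{\alpha}}(Z(N))$ inside $Z(N\rtimes_{\hat{\alpha}}\hat{G})$. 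Its proof of (ii)$\implies$(i) is exactly the easy half of your first computation: $x\in Z(M)^{\alpha}\setminus\C 1$ yields the nontrivial central element $I_\ell^{\alpha}(x)$. You instead perform a second, honest duality to obtain $(M\vNtensor\B(L^2(G)))\rtimes_{\tilde{\alpha}}G\cong N\vNtensor\B(L^2(\hat{G}))$ and then quote Connes--Takesaki twice, once for factoriality and once for the description of $\Gamma(\tilde{\alpha})$ as the kernel of the dual action on the center of the crossed product. This buys both directions simultaneously and delegates the hard analysis to standard theorems, but the cost is the equivariance bookkeeping you yourself flag: one must verify that $\Psi$ intertwines $\tilde{\alpha}$ with the bidual action $\widehat{\hat{\alpha}}$ (the ingredients are in Proposition \ref{cocycleduality} and Nakagami--Sutherland, but the statement is not isolated there), and that under the second duality $\widehat{\tilde{\alpha}}$ restricts to $\hat{\alpha}$ on $Z(N)\otimes\C$; those verifications are computations of the same kind and length as the paper's direct chain, which is presumably why the author wrote the latter. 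Your closing alternative --- deduce everything from the twisted Landstad identity $N^{\hat{\alpha}}=I_\ell^{\alpha}(M)$, so that conditions (i) and (ii) each become $Z(N)=\C 1$ --- is also correct and arguably the cleanest packaging, but note that the Landstad identity is the one nontrivial input there: the paper in effect substitutes for it the right-handed, stabilized version $N_r=(M\vNtensor\B(L^2(G)))^{\tilde{\alpha}}$, which it cites rather than proves.
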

\begin{proof}
    $\text{(i)}\implies \text{(ii)}.$ Let $N=M\rtimes_{\alpha,u}^\ell G$. By assumption, we obtain $Z(N) \subseteq N^{\hat{\alpha}}$. Indeed, for each $n\in Z(N)$, it follows that $\hat{\alpha}_p(n)=n$ for all $p \in \hat{G}$. Since, for each $n \in Z(N)$, $(\pi_{\hat{\alpha}}(n)\xi)(p)=n\xi(p)$ for all $\xi \in L^2(\hat{G},\mathcal{K}),p\in \hat{G}$ where $\mathcal{K}=\mathcal{H}\otimes L^2(G)$, we see that $$\pi_{\hat{\alpha}}(n)\pi_{\hat{\alpha}}(x)=\pi_{\hat{\alpha}}(x)\pi_{\hat{\alpha}}(n), \pi_{\hat{\alpha}}(n)\lambda_{\hat{\alpha}}(q)=\lambda_{\hat{\alpha}}(q)\pi_{\hat{\alpha}}(n)\,\,(x\in N,q\in\hat{G}).$$ Hence, $\pi_{\hat{\alpha}}(Z(N))$ is contained in the center of $N \rtimes_{\hat{\alpha}}\hat{G}$. Put $N_r=M \rtimes_{\alpha,u}^r G \subseteq M \vNtensor \B(L^2(G))$. 
    Let $\Psi\colon M \vNtensor\B(L^2(G)) \to N \rtimes_{\hat{\alpha}}\hat{G}$ be the isomorphism obtained by Proposition \ref{cocycleduality}. In fact, $\Psi=\Phi^{-1}\circ \Theta$ where $\Phi\colon N\rtimes_{\hat{\alpha}}\hat{G}\to  N_r\rtimes_\beta^d G$ is a $*$-isomorphism defined in Proposition \ref{cocycleduality} and $\Theta \colon  M\vNtensor \B(L^2(G)) \to N_r \rtimes_\beta^d G$ is a $*$-isomorphism defined in \cite[Theorem 2]{NakagamiSutherland1979}. We see that 
    \begin{align*}
        \begin{cases}
         \Theta(I_r^\alpha(x))=I_r^\alpha(x)\otimes1_G, \quad (x\in M) \\
         \Theta(\lambda_r^u(g))=\lambda_r^u(g)\otimes\lambda_r(g), \quad(g\in G).
        \end{cases}
    \end{align*}
    Therefore, we obtain $\Psi(N_r)=\pi_{\hat{\alpha}}(N)$ by the calculation in Proposition \ref{cocycleduality}. Furthermore, since $\pi_{\hat{\alpha}}$ is an isomorphism of $N$ onto image, we have $\Psi(Z(N_r))=Z(\Psi(N_r))=Z(\pi_{\hat{\alpha}}(N))\subseteq \pi_{\hat{\alpha}}(Z(N))$.
    Noting that $N_r=(M \vNtensor \B(L^2(G)))^{\tilde{\alpha}}\subseteq M \vNtensor L^2(G)$ by \cite[Corollary 4]{NakagamiSutherland1979}, it follows that 
    \begin{align*}
        \Psi(Z(N_r))&=\Psi(Z(N_r)^{\tilde{\alpha}})= \Psi(Z(N_r))^{\Psi\circ\tilde{\alpha}\circ \Psi^{-1}} \subseteq \pi_{\hat{\alpha}}(Z(N))^{\Psi\circ\tilde{\alpha}\circ \Psi^{-1}} \subseteq Z(N\rtimes_{\hat{\alpha}}\hat{G})^{\Psi\circ\tilde{\alpha}\circ \Psi^{-1}} \\
        &= \Psi(Z(M \vNtensor \B(L^2(G))))^{\Psi\circ\tilde{\alpha}\circ \Psi^{-1}}=\Psi(Z(M\vNtensor\B
        (L^2(G)))^{\tilde{\alpha}}).
    \end{align*}
    We shall show $Z(M\vNtensor\B
    (L^2(G)))^{\tilde{\alpha}}=\C$. Since $Z(M\vNtensor \B(L^2(G)))=Z(M)\vNtensor \C$, for $x \otimes 1_G \in Z(M) \vNtensor \C
    $, we calculate 
    \begin{align*}
        (\tilde{\alpha}_t(x\otimes 1_G)\xi)(s) &=(\tilde{\lambda}^u(t)(\alpha_t\otimes\iota
        )(x\otimes 1_G)\tilde{\lambda}^u(t)^*\xi)(s) 
        \\&= u(t,t^{-1}s)^*((\alpha_t(x)\otimes 1_G)\tilde{\lambda}^u(t)^*\xi)(t^{-1}s) 
        \\&= u(t,t^{-1}s)^*\alpha_t(x)(\tilde{\lambda}^u(t)^*\xi)(t^{-1}s)
        \\&= u(t,t^{-1}s)^*\alpha_t(x)u(t,t^{-1}s)\xi(s)
        \\&= \alpha_t(x)\xi(s)
        \\&= ((\alpha_t(x)\otimes1_G)\xi)(s)            
    \end{align*}
    for all $\xi \in \mathcal{H} \otimes L^2(G), s,t\in G$. Thus, we conclude that $\tilde{\alpha}_t(x\otimes 1_G)=\alpha_t(x)\otimes1_G\,(t\in G)$. Hence, if $\tilde{\alpha}_t(x\otimes 1_G)=x\otimes1_G$ for all $t\in G$, then $\alpha_t(x)=x\, (t\in G)$ holds. Central ergodicity of the cocycle action implies $Z(M\vNtensor \B(L^2(G)))^{\tilde{\alpha}}=Z(M)^\alpha \vNtensor\C=\C$. We obtain $Z(N)\cong Z(N_r) \cong \Psi(Z(N_r)) = \C$. That is, $N$ is a factor. \\
    $\text{(ii)}\implies\text{(i)}.$ If $N=M \rtimes_{\alpha,u}^\ell G$ is a factor, then the kernel of the restriction of the action $\hat{\alpha}$ to $Z(N)=\C$ must be the entire $\hat{G}$. It remains to show the central ergodicity of the cocycle action. For, $x,y \in M, s\in G$, we see that 
    \begin{align*}
        \begin{cases}
            I_\ell^\alpha(x)I_\ell^{\alpha}(y)=I_\ell^\alpha(xy),\\
            \lambda_\ell^u(s)I_\ell^\alpha(x)\lambda_\ell^u(s)^*=I_\ell^\alpha(\alpha_s(x)).
        \end{cases}
    \end{align*}
    Now, suppose that $(\alpha,u)$ is not centrally ergodic. Then, there exists $x \in Z(M)^\alpha$ such that $x \notin \C 1_M$. For $y\in M$, since $x \in Z(M)$, $$I_\ell^\alpha(x)I_\ell^{\alpha}(y)=I_\ell^\alpha(xy)=I_\ell^\alpha(yx)=I_\ell^\alpha(y)I_\ell^{\alpha}(x).$$
    Moreover, by the covariant relation, for $s\in G$, $$\lambda_\ell^u(s)I_\ell^\alpha(x)\lambda_\ell^u(s)^*=I_\ell^\alpha(\alpha_s(x))=I_\ell^\alpha(x),$$
    that is $\lambda_\ell^u(s)I_\ell^\alpha(x)=I_\ell^\alpha(x)\lambda_\ell^u(s)$. It follows that $I_\ell^\alpha(x)$ is a non trivial central element of $M\rtimes_{\alpha,u}^\ell G$, which is a contradiction.
\end{proof}

\section{Application to almost unimodular groups}
In this section, we apply Theorem \ref{twistConnesfactority} to group von Neumann algebras of groups which satisfy that $G_1$ is open, called almost unimodular groups \cite[Definition 2.2]{guinto2025unimodulargroups}. We use the following proposition. We provide a proof for later use. Note that locally compact groups are assumed to be second countable in this section as mentioned earlier. 

\begin{proposition}
[{\cite[II, Proposition 3.1.7]{Sutherland1980}}]\label{cocycleresol}
    For a locally compact group $G$, a discrete group $K$ and an open normal subgroup $H$ of $G$, if $G/H \cong K$ as topological groups and $H$ is unimodular, then there exists a cocycle action $(\alpha,u)$ of $K$ on $L(H)$ such that $L(G) \cong L(H) \rtimes_{\alpha,u}^\ell K$ holds.
\end{proposition}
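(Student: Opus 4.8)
The plan is to first exhibit $G$ itself as a group-theoretic twisted crossed product of $H$ by $K$, and then transport this decomposition to the group von Neumann algebra via an explicit unitary implementing the coset decomposition of $L^2(G)$. \textbf{Group-level decomposition.} Since $H\trianglelefteq G$ is open with discrete quotient $G/H\cong K$, I would fix a section $\sigma\colon K\to G$ of the quotient map with $\sigma(e_K)=e_G$, chosen inverse-preserving (i.e. $\sigma(k^{-1})=\sigma(k)^{-1}$) whenever possible. Define $\alpha_k=\Ad(\sigma(k))|_H\in\mathrm{Aut}(H)$, which is well defined because $H$ is normal, together with the $H$-valued map $c(k,l)=\sigma(k)\sigma(l)\sigma(kl)^{-1}$. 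One checks directly that $(\alpha,c)$ satisfies the relations of Definition \ref{twistcrossprodgrp}, and that $(h,k)\mapsto\sigma(k)h$ is an isomorphism $H\rtimes_{\alpha,c}K\xrightarrow{\sim}G$ of topological groups; discreteness of $K$ together with openness of $H$ makes the product topology agree with that of $G$.

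\textbf{Lifting to von Neumann algebras.} Each $\alpha_k\in\mathrm{Aut}(H)$ induces a normal $*$-automorphism of $L(H)$, again written $\alpha_k$, characterized by $\alpha_k(\lambda_\ell^H(h))=\lambda_\ell^H(\alpha_k(h))$ and implemented by the canonical unitary on $L^2(H)$ attached to the automorphism $\alpha_k$ of $H$. Setting $u(k,l)=\lambda_\ell^H(c(k,l))\in L(H)^U$, the group cocycle identities for $(\alpha,c)$ together with the fact that $\lambda_\ell^H$ turns conjugation by $c(k,l)\in H$ into $\Ad(u(k,l))$ yield exactly the relations of Definition \ref{defofcocycle}; the measurability requirement is automatic since $K$ is discrete. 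Thus $(\alpha,u)\colon K\acts L(H)$ is a cocycle action.

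\textbf{The spatial isomorphism.} Using the left-coset decomposition $G=\bigsqcup_{k\in K}\sigma(k)H$ and the fact that $\Delta_H\equiv1$ with $\Delta_G|_H\equiv1$ (Lemma \ref{openimpmodcondi}), so that $\mu_G|_H=\mu_H$ and left translation is measure-preserving, I would define $W\colon L^2(G)\to L^2(K,L^2(H))$ by $(W\eta)(k)(h)=\eta(\sigma(k)h)$; a short computation shows $W$ is unitary, with no modular factors intervening precisely because $H$ is unimodular. It then remains to match generators: for $h\in H$ one finds $W\lambda_\ell^G(h)W^*=I_\ell^\alpha(\lambda_\ell^H(h))$ (the fiber over $k$ being acted on by $\alpha_{k^{-1}}(\lambda_\ell^H(h))$), and for $l\in K$ one finds $W\lambda_\ell^G(\sigma(l))W^*=\lambda_\ell^u(l)$. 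Since $\{\lambda_\ell^G(g)\}_{g\in G}$ generates $L(G)$ while $\{I_\ell^\alpha(x),\lambda_\ell^u(l)\}$ generates $L(H)\rtimes_{\alpha,u}^\ell K$ (Definition \ref{defoftwistvN}), matching these two generating families yields the claimed isomorphism.

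\textbf{Main obstacle.} The crux is the second generator identity. Writing $\sigma(l)^{-1}\sigma(k)=\alpha_{l}^{-1}(c(l,l^{-1}k))^{-1}\sigma(l^{-1}k)$ and feeding this into $(W\lambda_\ell^G(\sigma(l))\eta)(k)(h)=\eta(\sigma(l)^{-1}\sigma(k)h)$, one must show that the resulting twisting unitary on the fiber over $k$ is exactly $u(k^{-1},l)=\lambda_\ell^H(c(k^{-1},l))$ as prescribed by Definition \ref{defoftwistvN}. This reduces to the cocycle identity $\alpha_{k^{-1}}(c(l,l^{-1}k))=c(k^{-1},l)$, which holds once $\sigma$ is inverse-preserving (using $\sigma(k^{-1}l)^{-1}=\sigma(l^{-1}k)$). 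Reconciling a general section with the precise conventions of the left twisted crossed product, and keeping track of the fact that the unimodularity of $H$ trivializes every modular factor throughout, is the only genuinely delicate bookkeeping; the remaining verifications — unitarity of $W$, the first generator identity, and the cocycle relations for $(\alpha,u)$ — are routine.
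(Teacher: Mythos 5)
Your overall strategy is the same as the paper's: build a group-level cocycle pair from a section of $q\colon G\to G/H\cong K$, lift it to $L(H)$ via $u(k,l)=\lambda_\ell^H(c(k,l))$, and match generators of $L(G)$ and $L(H)\rtimes_{\alpha,u}^\ell K$ through the unitary coming from the coset decomposition of $L^2(G)$. Your computations are correct as far as they go (in particular the reduction of the second generator identity to $\alpha_{k^{-1}}(c(l,l^{-1}k))=c(k^{-1},l)$ is right), but the entire argument hinges on the section $\sigma$ being inverse preserving, and you only claim such a section can be chosen ``whenever possible''. This is a genuine gap in the stated generality: an inverse-preserving section need not exist. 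If $k\in K$ satisfies $k^2=e_K$, inverse preservation forces $\sigma(k)^2=e_G$, i.e.\ the coset $q^{-1}(k)$ must contain an involution of $G$, which can fail: take $G=\Z$, $H=2\Z$, $K=\Z/2\Z$; every lift of the nontrivial element is an odd integer $m$, and $m^{-1}=-m\ne m$. Moreover the hypothesis is not removable from your argument by inspection: not only the ``main obstacle'' identity but already your first generator identity needs it, since the computation of $W\lambda_\ell^G(h_0)W^*$ produces $\alpha_k^{-1}(\lambda_\ell^H(h_0))=\lambda_\ell^H(\sigma(k)^{-1}h_0\sigma(k))$ on the fiber over $k$, whereas Definition \ref{defoftwistvN} demands $\alpha_{k^{-1}}(\lambda_\ell^H(h_0))=\lambda_\ell^H(\sigma(k^{-1})h_0\sigma(k^{-1})^{-1})$, and these automorphisms differ by $\Ad(\sigma(k^{-1})\sigma(k))$ for a general section. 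So your proof covers only those $K$ admitting inverse-preserving sections (e.g.\ torsion-free $K$, in particular $K=\Delta_G(G)\subseteq\R_+$ in the paper's application, cf.\ Remark \ref{existenceofinvsec}), not an arbitrary discrete group $K$ as in the proposition.

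The paper circumvents exactly this point by running the argument with two different cocycle pairs attached to a single arbitrary section $\sigma$ (normalized at the identity): the pair $(\alpha^K,c^K)$ with $\alpha^K_k=\Ad(\sigma(k))|_H$, $c^K(k,l)=\sigma(k)\sigma(l)\sigma(kl)^{-1}$, used for the topological decomposition $G\cong H\rtimes_{\alpha^K,c^K}K$, and the pair $(\beta,c)$ built from the flipped section $k\mapsto\sigma(k^{-1})^{-1}$, namely $\beta_k=\Ad(\sigma(k^{-1})^{-1})|_H$ and $c(k_1,k_2)=\sigma(k_1^{-1})^{-1}\sigma(k_2^{-1})^{-1}\sigma(k_2^{-1}k_1^{-1})$, used for the von Neumann algebraic cocycle action $(\alpha,u)$. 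Since $\beta_{k^{-1}}=\Ad(\sigma(k)^{-1})=(\alpha^K_k)^{-1}$ holds for every section, the inversions built into Definition \ref{defoftwistvN} are absorbed with no hypothesis on $\sigma$; the two pairs coincide precisely when $\sigma$ is inverse preserving. To repair your proof you must either weaken the statement or adopt this device. Two smaller points: with the paper's convention $(x,s)(y,t)=(x\alpha_s(y)c(s,t),st)$ of Definition \ref{twistcrossprodgrp}, the group isomorphism is $(h,k)\mapsto h\sigma(k)$, not $(h,k)\mapsto\sigma(k)h$ --- your map is not multiplicative for this law (harmless for your main computation, which works directly on $L^2(G)$, but incorrect as stated); and unimodularity of $H$ is not what trivializes modular factors in your $W$ (left invariance alone does that) --- in the paper it is used so that the module $\delta$ of the automorphisms $\beta_k$ is a homomorphism on $K$, which enters the construction of the implementing unitaries $v_k$ and the left Haar measure $\delta(k)\,d\mu_H(h)\,d\mu_K(k)$ on the product group.
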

\begin{proof}
    Fix representatives $(g_i)_{i\in I}\subseteq G$ for the $H$ cosets. That is, $(g_i)_{i\in I}$ satisfies $G=\bigsqcup_{i\in I}g_i H$. We can take as $e_G\in (g_i)_{i\in I}$. By assumption, there exists an injective map $\sigma\colon K\to G$ such that $q\circ \sigma\colon K\to G/H$ is a topological group isomorphism, where $q\colon G\to G/H$ is the canonical quotient map. One can normalize this map as $\sigma(e_K)=e_G$. Moreover, we can take this map as a continuous map since $K$ is discrete. Set $\alpha^K\colon K\to \text{Aut}(H)$ as $\alpha_k^K(h)=\sigma(k)h\sigma(k)^{-1}$ and $c^K\colon K\times K \to H$ as $c^K(k_1,k_2)=\sigma(k_1)\sigma(k_2)\sigma(k_1k_2)^{-1}$. Then by a straightforward calculations, one can see that $(\alpha^K,c^K)$ is a continuous cocycle action of $K$ on $H$. Furthermore, it can be shown that $G\ni g=h\sigma(k)\mapsto (h,k)\in H\rtimes_{\alpha^K,c^K}K$ is a topological group isomorphism.\par
    We define $\beta_k\in \text{Aut}(H)$ by $\beta_k(h)=\sigma(k^{-1})^{-1}h\sigma(k^{-1})$ for each $k\in K$ and $c(k_1,k_2) \in H$ by $c(k_1,k_2)=\sigma(k_1^{-1})^{-1}\sigma(k_2^{-1})^{-1}\sigma(k_2^{-1}k_1^{-1})$ for each $k_1,k_2\in K$. Furthermore, we define the unitary $v_k$ on $L^2(H)$ by $$(v_k\xi)(h)=\delta(k)^{1/2}\xi(\beta_k^{-1}(h))$$
    for each $k \in K$ where $\xi \in L^2(H)$ and $\delta(k) \in (0,\infty)$ is a constant which satisfies $\int_H\xi(h)\,d\mu_H(h)=\delta(k)\int_H\xi(\beta_k^{-1}(h))\,d\mu_{H}(h)$ for all $\xi \in L^1(H)$. Note that $\delta\colon K\to (0,\infty)$ is a homomorphism since $H$ is unimodular. We need unimodularity because $\beta_{k_1}^{-1}\beta_{k_2}^{-1}=\beta_{k_2k_1}^{-1}\circ\text{Ad}(c(k_2,k_1))^{-1}$ for all $k_1,k_2 \in K$. For $k\in K$, since $\alpha_k(\lambda_\ell^{H}(h))=\lambda_\ell^{H}(\beta_k(h))\,\,(h\in H)$, it follows that $L(H) \ni x\mapsto v_kxv_k^* \in \B(L^2(H))$ is an automorphism on $L(H)$. Now, if we define $\alpha_k\colon L(H)\ni x\mapsto v_kxv_k^* \in L(H)$ and $u(k_1,k_2)=\lambda_\ell^H(c(k_1,k_2))$, then it follows that $(\alpha,u)\colon K\curvearrowright L(H)$ is a cocycle action. By calculation, we obtain the cocycle equations. That is, they satisfy the following equations: 
    $$\alpha_{k_1}\alpha_{k_2}=\text{Ad}(u(k_1,k_2))\alpha_{k_1 k_2}, u(k_1,k_2)u(k_1k_2,k_3)=\alpha_{k_1}(u(k_2,k_3))u(k_1,k_2k_3)$$
    for all $k_1,k_2,k_3 \in K$. Moreover, $u(k,e)=u(e,k)=1_{L(H)}\,\,(k\in K)$ holds. Since $K$ is discrete, Borel requirement is satisfied. \par 
    Finally, we shall show that $L(G) \cong L(H) \rtimes_{\alpha,u}^\ell K$. Note that $L(G) \cong L(H\rtimes_{\alpha^K,c^K}K)$. We identify $L^2(H) \otimes L^2(K)$ with $L^2(H \times K)$. Under this identification, $L(H) \rtimes_{\alpha,u}^\ell K$ is generated by the operators $I_{\ell}^{\alpha}(\lambda_{\ell}^H(h_0))\,(h_0\in H)$ and $\lambda_{\ell}^u(k_0)\,(k_0\in K)$ given by 
    \begin{align*}
        \begin{cases}
            (I_\ell^{\alpha}(\lambda_\ell^H(h_0))\xi)(h,k)=((\alpha_{k^{-1}}(\lambda_\ell^H(h_0))\otimes1_K)\xi)(h,k)=\xi(\beta_{k^{-1}}(h_0)^{-1}h,k), \\
            (\lambda_\ell^u(k_0)\xi)(h,k)=((u(k^{-1},k_0)\otimes1_K)\xi)(h,k_0^{-1}k)=\xi(c(k^{-1},k_0)^{-1}h,k_0^{-1}k)
        \end{cases}
    \end{align*} 
    where $\xi \in L^2(H \times K),h,h_0\in H,k,k_0\in K$. Note that $\delta(k)\,d\mu_{H}(h)d\mu_K(k)$ is a left Haar measure on $H\rtimes_{\alpha^K,c^K}K$. Let $U$ be the unitary from $L^2(H\times K)$ to $L^2(H\rtimes_{\alpha^K,c^K} K)$ defined by $$(U\xi)(h,k)=\xi(\beta_{k^{-1}}(h),k).$$ 
     We see that $$(U^*\xi)(h,k)=\xi(\beta_{k^{-1}}^{-1}(h),k).$$
    We calculate, for $h_0 \in H$,
    \begin{align*}
        (U(I_\ell^{\alpha}(\lambda_\ell^H(h_0))U^*\xi)(h,k)&= (I_\ell^{\alpha}((\lambda_\ell^H(h_0))U^*\xi)(\beta_{k^{-1}}(h),k) \\
        &= (U^*\xi)(\beta_{k^{-1}}(h_0)^{-1}\beta_{k^{-1}}(h),k) \\
        &= \xi(h_0^{-1}h,k).
    \end{align*}
    Also, for $k_0 \in K$,
    \begin{align*}
        (U\lambda_\ell^u(k_0)U^*\xi)(h,k) &=(\lambda_\ell^u(k_0)U^*\xi)(\beta_{k^{-1}}(h),k) \\
        &= (U^*\xi)(c(k^{-1},k_0)^{-1}\beta_{k^{-1}}(h),k_0^{-1}k) \\
        &= \xi(\beta_{k^{-1}k_0}^{-1}(c(k^{-1},k_0)^{-1}\beta_{k^{-1}}(h)),k_0^{-1}k).
    \end{align*}
    On the other hand, with the identification $G \cong H\rtimes_{\alpha^K,c^K}K$, for $h_0 \in H$, 
    \begin{align*}
        (\lambda_\ell^{G}(h_0,e_K)\xi)(h,k) &=\xi((h_0,e_K)^{-1}(h,k)) \\
        &=\xi((h_0^{-1},e_K)(h,k)) \\
        &=\xi(h_0^{-1}h,k).
    \end{align*}
    Also, for $k_0\in K$, let $\sigma(k_0^{-1})^{-1}=h'\sigma(k_0) \in G$, then
    \begin{align*}
        (\lambda_\ell^{G}(h',k_0)\xi)(h,k)
        &= \xi((h',k_0)^{-1}(h,k)) \\
        &= \xi(((\alpha_{k_0}^K)^{-1}(h'^{-1}c^K(k_0,k_0^{-1})^{-1}),k_0^{-1})(h,k)) \\
        &= \xi((\alpha^K_{k_0})^{-1}(h'^{-1}c^K(k_0,k_0^{-1})^{-1})\alpha^K_{k_0^{-1}}(h)c^K(k_0^{-1},k),k_0^{-1}k)
    \end{align*}
    Furthermore, 
    \begin{align*}
        \beta_{k^{-1}k_0}^{-1}(c(k^{-1},k_0)^{-1}\beta_{k^{-1}}(h))
        &=\sigma(k_0^{-1}k)c(k^{-1},k_0)^{-1}\beta_{k^{-1}}(h)\sigma(k_0^{-1}k)^{-1} \\
        &= \sigma(k_0^{-1}k)\sigma(k_0^{-1}k)^{-1}\sigma(k_0^{-1})\sigma(k)\sigma(k)^{-1}h\sigma(k)\sigma(k_0^{-1}k)^{-1} \\
        &=\sigma(k_0^{-1})h\sigma(k)\sigma(k_0^{-1}k)^{-1},
    \end{align*}
    \begin{multline*}
         (\alpha^K_{k_0})^{-1}(h'^{-1}c^K(k_0,k_0^{-1})^{-1})\alpha^K_{k_0^{-1}}(h)c^K(k_0^{-1},k) \\
        = \sigma(k_0)^{-1}h'^{-1}c^K(k_0,k_0^{-1})^{-1}\sigma(k_0)\sigma(k_0^{-1})h\sigma(k_0^{-1})^{-1}\sigma(k_0^{-1})\sigma(k)\sigma(k_0^{-1}k)^{-1} \\=\sigma(k_0^{-1})\sigma(k_0^{-1})^{-1}\sigma(k_0)^{-1}\sigma(k_0)\sigma(k_0^{-1})h\sigma(k)\sigma(k_0^{-1}k)^{-1} =\sigma(k_0^{-1})h\sigma(k)\sigma(k_0^{-1}k)^{-1}.
    \end{multline*}
    Therefore, we conclude that 
    \begin{align*}
        \begin{cases}
            (U(I_\ell^{\alpha}(\lambda_\ell^H(h_0))U^*\xi)(h,k)=(\lambda_\ell^{G}(h_0,e_K)\xi)(h,k),\\
            (U\lambda_\ell^u(k_0)U^*\xi)(h,k)=(\lambda_\ell^{G}(\sigma(k_0^{-1})^{-1}\sigma(k_0)^{-1},k_0)\xi)(h,k)
        \end{cases}
    \end{align*}
    for all $h,h_0\in H, k,k_0\in K$. Therefore, we conclude that $$U(L(H)\rtimes_{\alpha,u}^\ell K)U^*=L(H\rtimes_{\alpha^K,c^K} K)\cong L(G).$$    
\end{proof}

\begin{remark}
    Note that the cocycle actions $(\alpha^K,c^K) \colon K \curvearrowright H$ and $(\alpha,u) \colon K \curvearrowright L(H)$ depend on a section $\sigma \colon K \to G$. In $\S 5$, we will focus on a section $\sigma$ which satisfies $\sigma(k^{-1})=\sigma(k)^{-1}$ for all $k \in K$, called inverese preserving. With an inverese preserving section, $(\alpha^K, c^K)$ and $(\alpha,u)$ are compatible in the sense that $(\alpha^K, c^K)=(\beta,c)$ in the proof of the above proposition.
\end{remark}

For an almost unimodular group $G$, since $G_1=\text{ker}\Delta_G$ is an open normal subgroup of $G$ which is unimodular, we obtain $G/G_1 \cong \Delta_G(G)$ as topological groups, where $\Delta_G(G)$ is equipped with the discrete topology. Therefore, we obtain the following corollary.

\begin{corollary}[{\cite[Theorem 5.1]{guinto2025unimodulargroups}}]
    For an almost unimodular group $G$, there is a cocycle action $(\alpha,u)\colon \Delta_G(G) \curvearrowright L(G_1)$ such that $$L(G) \cong L(G_1)\rtimes_{\alpha,u}^\ell\Delta_G(G).$$ 
\end{corollary}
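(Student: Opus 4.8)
The plan is to recognize the statement as a direct instance of Proposition \ref{cocycleresol} applied to the pair $H = G_1$ and $K = \Delta_G(G)$, so that the entire task reduces to checking the three hypotheses of that proposition: that $G_1$ is open and normal, that $G_1$ is unimodular, and that $G/G_1 \cong \Delta_G(G)$ as topological groups with $\Delta_G(G)$ discrete.

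First I would record that $G_1 = \ker\Delta_G$ is a normal subgroup of $G$, being the kernel of the continuous homomorphism $\Delta_G\colon G \to \R_+$, and that it is open precisely by the almost unimodularity hypothesis. Next I would verify unimodularity of $G_1$: since $G_1$ is open, Lemma \ref{openimpmodcondi} yields $\Delta_G|G_1 = \Delta_{G_1}$, while $\Delta_G|G_1 \equiv 1$ by the very definition $G_1 = \ker\Delta_G$; hence $\Delta_{G_1} \equiv 1$, so $G_1$ is unimodular. It is worth noting that the single fact ``$G_1$ is open'' thus supplies both the openness-plus-normality and, through Lemma \ref{openimpmodcondi}, the unimodularity needed.

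Then I would identify the quotient. The first isomorphism theorem applied to $\Delta_G$ gives an abstract group isomorphism $G/G_1 \cong \Delta_G(G)$. Because $G_1$ is open in $G$, the quotient $G/G_1$ is discrete, so equipping $\Delta_G(G) \subseteq \R_+$ with the discrete topology makes this an isomorphism of topological groups, with $K := \Delta_G(G)$ a discrete abelian group. With all hypotheses in place, Proposition \ref{cocycleresol} applies verbatim and produces a cocycle action $(\alpha,u)\colon \Delta_G(G) \curvearrowright L(G_1)$ together with the isomorphism $L(G) \cong L(G_1) \rtimes_{\alpha,u}^\ell \Delta_G(G)$. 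I expect essentially no obstacle here beyond bookkeeping; the only point needing a moment's care is the matching of topologies in the last step, but this is immediate from the openness of $G_1$.
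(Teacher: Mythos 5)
Your proposal is correct and follows exactly the paper's route: the paper likewise deduces the corollary by applying Proposition \ref{cocycleresol} to $H=G_1$, $K=\Delta_G(G)$, noting that $G_1$ is open, normal, and unimodular, and that $G/G_1\cong\Delta_G(G)$ as topological groups with the discrete topology. Your verification of unimodularity via Lemma \ref{openimpmodcondi} simply spells out a fact the paper asserts without comment, so there is no substantive difference.
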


\begin{remark}\label{remark2}
    Although this fact is used in \cite[Example 3.8, Theorem 5.1]{guinto2025unimodulargroups}, we need a slight modification to the construction of the cocycle action. We denote $\Delta_G(G)=K$. In \cite[Example 3.8, Theorem 5.1]{guinto2025unimodulargroups}, the twisted crossed product decomposition of $G$ is obtained by above $(\alpha^K,c^K)$. But, the group operation is different from Definition \ref{twistcrossprodgrp}. Also, the twisted crossed product decomposition of $L(G)$ is given by $\check{\alpha}_{k_1}(\lambda_\ell^{G_1}(s))=\lambda_\ell^{G_1}(\alpha^K_{k_1}(s)), \check{c}(k_1,k_2)=\lambda_\ell^{G_1}(c^K(k_1,k_2))\,\,(s\in G_1, k_1,k_2 \in K)$ in \cite{guinto2025unimodulargroups}.
    However, in Proposition \ref{cocycleresol}, we have to construct two cocycle actions of $K$ on $G_1$. It can be observed in the proof that $(\alpha^K,c^K)$ is used for the topological group decomposition $G \cong G_1 \rtimes_{\alpha^K,c^K}K$ and $(\alpha,u)$ is used for the von Neumann algebra decomposition $L(G) \cong L(G_1)\rtimes _{\alpha,u}^\ell K$.
\end{remark}

Since $\Delta_G(G) \le \R_+$ is an abelian group, we can apply Theorem \ref{twistConnesfactority}. In fact, central ergodicity of the cocycle action is equivalent to the factoriality of $L(G)$. We shall show it. \par 
In \cite[Theorem 2.1]{guinto2025unimodulargroups}, it was shown that $\varphi_G$ is an almost periodic weight if $G$ is an almost unimodular group 
Therefore, fix an almost unimodular group $G$, then there exists a continuous action $\tilde{\sigma}\colon \widehat{\Delta_G(G)} \curvearrowright L(G)$ called the point modular extension of $\sigma^{\varphi_G}\colon \R \curvearrowright L(G)$ (see \cite[Proposition 1.1]{Connes1974},\cite[Section 1.3]{Dykema1995} and \cite[Section 1.4]{GuintoNelson2024}). Here, $\text{Aut}(L(G))$ is equipped with the u-topology.  Let $\iota\colon \Delta_G(G) \to \R_+$ be the inclusion map. We obtain a continuous group homomorphism which is the transpose of the inclusion map $\hat{\iota}\colon \R \to \widehat{\Delta_G(G)}$ defined by dual pairings as 
$$\nai{k}{\hat{\iota}(t)}=\nai{\iota(k)}{t}=k^{it}\,\,(k\in \Delta_G(G),t\in\R).$$
Here, we identify $\R \cong \hat{\R}_+$. Injectivity of inclusion implies the density of $\hat{\iota}(\R)$ in $\widehat{\Delta_G(G)}$. In this setting, $\tilde{\sigma}_{\hat{\iota}(t)}=\sigma^{\varphi_G}_t\,\,(t\in \R)$ holds. We have the following proposition. This was mentioned in the proof of \cite[Theorem 5.1]{guinto2025unimodulargroups}, but since we use a different definition from \cite{guinto2025unimodulargroups}, we provide full details. 

\begin{proposition}\label{fullspectrum}
    For an almost unimodular group $G$, let $(\alpha,u)\colon \Delta_G(G) \curvearrowright L(G_1)$ be the cocycle action obtained by Proposition \ref{cocycleresol}, then $\Gamma(\alpha,u)=\widehat{\Delta_G(G)}$ holds.
\end{proposition}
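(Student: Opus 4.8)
The plan is to show that the dual action $\hat{\alpha}$ acts trivially on the center of $L(G_1)\rtimes_{\alpha,u}^\ell\Delta_G(G)\cong L(G)$ by identifying $\hat{\alpha}$, along the dense one-parameter subgroup $\hat{\iota}(\R)$, with the modular automorphism group $\sigma^{\varphi_G}$, which is known to fix the center pointwise. First I would transport the modular flow to the crossed product picture using the explicit isomorphism $U$ of Proposition \ref{cocycleresol}. Under $U$ the generator $I_\ell^\alpha(\lambda_\ell^{G_1}(h_0))$ corresponds to $\lambda_\ell^G(h_0)$ with $h_0\in G_1$, and since $G_1=\ker\Delta_G$ we have $\Delta_G(h_0)=1$, so $\sigma^{\varphi_G}_t$ fixes it. The generator $\lambda_\ell^u(k)$ corresponds to $\lambda_\ell^G(\sigma(k^{-1})^{-1})$, and because $\Delta_G$ descends to the isomorphism $G/G_1\cong\Delta_G(G)$ one computes $\Delta_G(\sigma(k^{-1})^{-1})=k$; hence after transport $\sigma^{\varphi_G}_t(\lambda_\ell^u(k))=k^{it}\lambda_\ell^u(k)$.

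Next I would compare this with $\hat{\alpha}_{\hat{\iota}(t)}$. By the defining relations of the action dual to $(\alpha,u)$, the automorphism $\hat{\alpha}_{\hat{\iota}(t)}$ fixes $I_\ell^\alpha(L(G_1))$ and sends $\lambda_\ell^u(k)\mapsto\overline{\nai{k}{\hat{\iota}(t)}}\lambda_\ell^u(k)=k^{-it}\lambda_\ell^u(k)$. Comparing with the previous computation shows that, under the identification $L(G)\cong L(G_1)\rtimes_{\alpha,u}^\ell\Delta_G(G)$, the automorphism $\hat{\alpha}_{\hat{\iota}(t)}$ coincides with $\sigma^{\varphi_G}_{-t}$ on all generators, hence on the whole algebra. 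This makes precise the assertion that $\hat{\alpha}$ restricted to $\hat{\iota}(\R)$ is conjugate to the modular flow, in accordance with the point-modular extension picture recalled above.

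I would then invoke the standard fact that the modular automorphism group of a faithful normal semifinite weight acts trivially on the center, since the center is always contained in the centralizer (see, e.g., \cite[Chapter VIII]{TakesakiOA2}). Consequently $\sigma^{\varphi_G}_{-t}$, and therefore $\hat{\alpha}_{\hat{\iota}(t)}$, fixes $Z(L(G_1)\rtimes_{\alpha,u}^\ell\Delta_G(G))$ pointwise for every $t\in\R$, which gives $\hat{\iota}(\R)\subseteq\Gamma(\alpha,u)$. Finally, $\hat{\iota}(\R)$ is dense in $\widehat{\Delta_G(G)}$, and $\Gamma(\alpha,u)$ is closed: for each fixed central $n$ the map $p\mapsto\hat{\alpha}_p(n)$ is $\sigma$-weakly continuous, so $\{p:\hat{\alpha}_p(n)=n\}$ is closed, and $\Gamma(\alpha,u)$ is the intersection of these sets over all $n\in Z(L(G_1)\rtimes_{\alpha,u}^\ell\Delta_G(G))$. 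Passing to limits along a net in $\hat{\iota}(\R)$ approximating an arbitrary $p$ then yields $\Gamma(\alpha,u)=\widehat{\Delta_G(G)}$.

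I expect the main obstacle to be the bookkeeping in the first step: correctly pushing the generator $\lambda_\ell^u(k)$ through the isomorphism of Proposition \ref{cocycleresol} to $\lambda_\ell^G(\sigma(k^{-1})^{-1})$ and verifying $\Delta_G(\sigma(k^{-1})^{-1})=k$, so that the modular flow and the dual action match up to the sign of the parameter. Once this identification is pinned down, the comparison with $\hat{\alpha}_{\hat{\iota}(t)}$ and the subsequent density-and-continuity argument are routine.
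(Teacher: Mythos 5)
Your proof is correct, and its core computation is the same as the paper's: both push the generators through the isomorphism of Proposition \ref{cocycleresol}, check that $\hat{\alpha}_{\hat{\iota}(t)}$ agrees with the transported modular flow $\sigma^{\varphi_G}_{-t}$ on $I_\ell^\alpha(\lambda_\ell^{G_1}(h_0))$ and $\lambda_\ell^u(k)$ (via $\Delta_G(\sigma(k^{-1})^{-1})=k$), and then exploit the density of $\hat{\iota}(\R)$ in $\widehat{\Delta_G(G)}$ together with the fact that the modular flow fixes the center. The genuine difference is where the density-and-continuity argument runs. The paper works with Connes' point modular extension $\tilde{\sigma}$ --- a continuous action of all of $\widehat{\Delta_G(G)}$ on $L(G)$, available because $\varphi_G$ is almost periodic by \cite[Theorem 2.1]{guinto2025unimodulargroups} --- and proves the conjugacy $\Phi\circ\hat{\alpha}_p\circ\Phi^{-1}=\tilde{\sigma}_{-p}$ for \emph{every} $p$, from which triviality on the center follows. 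You avoid the point modular extension entirely: you match the two flows only along $\hat{\iota}(\R)$, conclude $\hat{\iota}(\R)\subseteq\Gamma(\alpha,u)$, and finish by observing that $\Gamma(\alpha,u)$ is closed because $p\mapsto\hat{\alpha}_p(n)$ is $\sigma$-weakly continuous, a continuity that is built into the construction of the dual action in \S 3. Your route is leaner in its prerequisites --- no almost periodicity or point modular extension is needed --- while the paper's version yields the stronger statement that the dual action is conjugate to the point modular extension at every $p\in\widehat{\Delta_G(G)}$, which is precisely the link to \cite[Theorem 5.1]{guinto2025unimodulargroups} emphasized in Remark \ref{remark1}. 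For the proposition itself, both arguments are complete.
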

\begin{proof}
    We denote $K=\Delta_G(G)$. We identify $G \cong G_1 \rtimes_{\alpha^K,c^K} K$. Let $U$ be the unitary defined in the proof of Proposition \ref{cocycleresol} and $\Phi\colon L(G_1)\rtimes_{\alpha,u}^\ell K \to L(G_1 \rtimes_{\alpha^K,c^K} K)$ be the isomorphism defined by the adjoint of $U$. We calculate $\Phi\circ\hat{\alpha}\circ\Phi^{-1}$. Fix $p \in \hat{K}$. For $s\in G_1$, 
    \begin{align*}
        \Phi\circ\hat{\alpha}_p\circ\Phi^{-1}(\lambda^G_\ell(s,1))&=\Phi\circ\hat{\alpha}_p(I_\ell^\alpha(\lambda_\ell^{G_1}(s))) =\Phi(I_\ell^\alpha(\lambda_\ell^{G_1}(s)))=\lambda^G_\ell(s,1).
    \end{align*}
    Also, for $k\in K$, let $(s_0,k)\in G_1 \rtimes_{\alpha^K,c^K} K$ be the element which satisfies $\sigma(k^{-1})^{-1}=s_0\sigma(k) \in G$, then 
    \begin{align*}
        \Phi\circ\hat{\alpha}_p\circ\Phi^{-1}(\lambda^G_\ell(s_0,k))&=\Phi\circ\hat{\alpha}_p(\lambda_\ell^u(k))\\&=\Phi(\overline{\nai{k}{p}}\lambda_\ell^u(k))
        = \overline{\nai{k}{p}}\lambda^G_\ell(s_0,k).
    \end{align*}
    Now, for each $(s,k)\in G_1 \rtimes_{\alpha^K,c^K} K$, there exists $s_0 \in G_1$ which satisfies $\sigma(k^{-1})^{-1}=s_0\sigma(k) \in G$ and $(s,k)=(ss_0^{-1},1)(s_0,k)$. Therefore, 
    \begin{align*}
        \Phi\circ\hat{\alpha}_p\circ\Phi^{-1}(\lambda^G_\ell(s,k)) &= \Phi\circ\hat{\alpha}_p\circ\Phi^{-1}(\lambda^G_\ell(ss_0^{-1},1))\cdot\Phi\circ\hat{\alpha}_p\circ\Phi^{-1}(\lambda^G_\ell(s_0,k))
        \\&= \lambda^G_\ell(ss_0^{-1},1)\cdot \overline{\nai{k}{p}}\lambda^G_\ell(s_0,k)
        \\&= \overline{\nai{k}{p}}\lambda^G_\ell(ss_0^{-1},1)\lambda^G_\ell(s_0,k)
        \\&= \overline{\nai{k}{p}} \lambda_\ell^{G}(s,k).
    \end{align*}
    It follows that $\Phi\circ\hat{\alpha}_p\circ\Phi^{-1}(\lambda^G_\ell(s,k))=\overline{\nai{k}{p}} \lambda_\ell^{G}(s,k)$ for all $p\in \hat{K},s\in G_1, k\in K$. Next, we calculate $\tilde{\sigma}$. Note that $\Delta_G(s\sigma(k))=k$, whence $\Delta_{G}(s,k)=k$. For $(s,k)\in G_1 \rtimes_{\alpha^K,c^K} K$, we have
    \begin{align*}
        \tilde{\sigma}_{\hat{\iota}(t)}(\lambda_\ell^G(s,k))&=\sigma^{\varphi_G}_t(\lambda_\ell^G(s,k))=\Delta_G((s,k))^{it}\lambda_\ell^G(s,k)\\&=k^{it}\lambda_\ell^G(s,k)=\nai{\iota(k)}{t}\lambda_\ell^G(s,k)=\nai{k}{\hat{\iota}(t)}\lambda_\ell^G(s,k)\,\,(t\in\R).
    \end{align*}
    Hence, density of $\hat{\iota}(\R) \subseteq \hat{K}$ and continuity $\hat{K} \ni p \mapsto \tilde{\sigma}_p \in \text{Aut}(L(G))$ implies 
    $$\tilde{\sigma}_p(\lambda_\ell^G(s,k))=\nai{k}{p}\lambda_\ell^G(s,k),$$
    for all $p\in \hat{K},s\in G_1, k\in K$. Therefore, we obtain $$\Phi\circ\hat{\alpha}_p\circ\Phi^{-1}(\lambda^G_\ell(g))=\tilde{\sigma}_{-p}(\lambda_\ell^G(g)),$$
    for all $p\in \hat{K},g\in G$. It follows that $$\Phi\circ\hat{\alpha}_p\circ\Phi^{-1}(x)=\tilde{\sigma}_{-p}(x),$$
    for all $p\in \hat{K}, x\in L(G)$. Take $x\in Z(L(G))$, since $Z(L(G)) \subseteq L(G)^{\varphi_G}$, we have 
    $$\tilde{\sigma}_{\hat{\iota}(t)}(x)=\sigma_t^{\varphi_G}(x)=x\,\, (t\in \R).$$
    Thus, $\tilde{\sigma}_p(x)=x$ for all $p\in \hat{K}$ holds. We conclude that $\Phi\circ\hat{\alpha}_p\circ\Phi^{-1}(x)=x$ for all $x\in Z(L(G)),p\in\hat{K}$. Since $\Phi^{-1}(Z(L(G)))=Z(L(G_1)\rtimes_{\alpha,u}^\ell K)$, it follows that $\hat{\alpha}_p(x)=x$ for all $p \in \hat{K}, x\in Z(L(G_1)\rtimes_{\alpha,u}^\ell K)$. That is, $\Gamma(\alpha,u)=\hat{K}$ holds.
\end{proof}

Combining Theorem \ref{twistConnesfactority} and Proposition \ref{fullspectrum}, the following theorem holds.

\begin{theorem}\label{tdlcfactority}
    For an almost unimodular group $G$, let $(\alpha,u)\colon \Delta_G(G) \curvearrowright L(G_1)$ be the cocycle action obtained by Proposition \ref{cocycleresol}, then the following two conditions are equivalent.
    \begin{enumerate}
        \item[(i)] $(\alpha,u)\colon \Delta_G(G) \curvearrowright L(G_1)$ is centrally ergodic.
        \item[(ii)] $L(G)$ is a factor.
    \end{enumerate}  
\end{theorem}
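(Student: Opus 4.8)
The plan is to combine the general factoriality criterion for twisted crossed products (Theorem \ref{twistConnesfactority}) with the computation of the twisted spectrum (Proposition \ref{fullspectrum}), transported through the twisted crossed product decomposition of $L(G)$. Since all the substantive analytic work has already been packaged into those two results, the argument is essentially a matter of checking that their hypotheses apply and then assembling them.

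First I would observe that, because $G$ is almost unimodular, $G_1=\ker\Delta_G$ is an open normal unimodular subgroup and $\Delta_G(G)$—being a subgroup of $\R_+$ equipped with the discrete topology—is a second countable locally compact \emph{abelian} group, with $G/G_1\cong\Delta_G(G)$ as topological groups. Hence the corollary to Proposition \ref{cocycleresol} supplies the cocycle action $(\alpha,u)\colon\Delta_G(G)\curvearrowright L(G_1)$ together with an isomorphism $L(G)\cong L(G_1)\rtimes_{\alpha,u}^\ell\Delta_G(G)$. This isomorphism is what lets us replace questions about $L(G)$ with questions about the crossed product, for which we have a usable criterion.

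Next, since $\Delta_G(G)$ is abelian, Theorem \ref{twistConnesfactority} applies verbatim with $M=L(G_1)$ and acting group $\Delta_G(G)$: the crossed product $L(G_1)\rtimes_{\alpha,u}^\ell\Delta_G(G)$ is a factor if and only if \emph{both} $\Gamma(\alpha,u)=\widehat{\Delta_G(G)}$ and $(\alpha,u)$ is centrally ergodic. Transporting factoriality across the isomorphism of the previous step (factoriality is an isomorphism invariant), the very same two-part characterization holds for $L(G)$ itself.

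Finally I would invoke Proposition \ref{fullspectrum}, which guarantees that $\Gamma(\alpha,u)=\widehat{\Delta_G(G)}$ holds automatically for \emph{every} almost unimodular group (this is where the identification of $\hat\alpha$ with the point-modular extension of $\sigma^{\varphi_G}$, together with $Z(L(G))\subseteq L(G)^{\varphi_G}$, is used). Consequently the spectral hypothesis in Theorem \ref{twistConnesfactority} is always satisfied here, and the two-condition criterion collapses to the single condition of central ergodicity, giving the equivalence of (i) and (ii). I do not expect a genuine obstacle in this last theorem: the only point demanding care is the verification that the hypotheses of Theorem \ref{twistConnesfactority}—above all the abelianness of the acting group—are met, which is immediate from $\Delta_G(G)\le\R_+$. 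The real difficulty of the argument lies upstream, in establishing Proposition \ref{fullspectrum}.
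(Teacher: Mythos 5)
Your proposal is correct and follows exactly the paper's own argument: the paper proves Theorem \ref{tdlcfactority} by combining the decomposition $L(G)\cong L(G_1)\rtimes_{\alpha,u}^\ell\Delta_G(G)$ from Proposition \ref{cocycleresol} with Theorem \ref{twistConnesfactority} (applicable since $\Delta_G(G)\le\R_+$ is abelian) and Proposition \ref{fullspectrum}, which makes the condition $\Gamma(\alpha,u)=\widehat{\Delta_G(G)}$ automatic, so that factoriality reduces to central ergodicity. Your assembly of these ingredients, including the remark that factoriality transports across the isomorphism, matches the paper's reasoning.
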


\begin{remark}
    If $G$ is unimodular, then Theorem \ref{tdlcfactority} is a trivial result because $\Delta_G(G)=\{1\}$ and hence $(\alpha,u)$ becomes a trivial action. Also, in non-unimodular case, if $L(G_1)$ is a factor, then $L(G)$ is a factor because $L(G_1) \cong L(G)^{\varphi_G}$. In this case, the cocycle action $(\alpha,u)\colon \Delta_G(G) \curvearrowright L(G_1)$ must be centrally ergodic.
\end{remark}

 Theorem \ref{tdlcfactority} is still useful when $L(G_1)$ is not a factor because there exists an almost unimodular group $G$ which satisfies that $L(G)$ is a factor and $L(G_1)$ is not a factor. For example, let $\Q_p$ be a $p$-adic numbers as an additive group, also $\Q_p^*$ be a multiplicative group. Then $G=\Q_p \rtimes \Q_p^*$ is a desired one. It is well known that $G$ is a totally disconnected group (and hence is an almost unimodular group) and $L(G)$ is a $\rm{I}_\infty$ factor (see \cite[Section 3]{Blackadar1977}). In this case, $G_1=\Q_p \rtimes \Z_p^*=\{(a,b)\in G:|b|_p=1\}$ and it follows that  $\int_{\Z_p}\lambda^{G_1}(a,1)\,d\mu_{\Q_p}(a)\in L(G_1)$ is a non-trivial central element (see, e.g.  \cite[Example 5.3 (1)]{guinto2025unimodulargroups}). Thus, there is a centrally ergodic cocycle action even if $L(G_1)$ is not a factor. 
In fact, we can directly verify that $(\alpha,u)\colon \Delta_G(G) \curvearrowright L(G_1)$ is centrally ergodic when $G=\Q_p \rtimes \Q_p^*,G_1=\Q_p \rtimes \Z_p^*$.

\begin{example}\label{afineex}
    Let $G=\Q_p \rtimes \Q_p^*,G_1=\text{ker}\Delta_G=\Q_p \rtimes \Z_p^*$. Since $\Delta_G(a,b)=|b|_p^{-1}$, we can take $\sigma \colon \Delta_G(G) \to G$ as a homomorphism $p^n \mapsto (0,p^{n})$. In this case, the cocycle action $(\alpha,u)$ is an action $\alpha$ given by $$\alpha_{p^n}(\lambda(a,b))=\lambda(p^{n}a,b)\,\,(n\in \Z).$$ Here, $\Q_p \rtimes \Z_p^*$ is obtained by the action $\gamma\colon \Z_p^* \curvearrowright \Q_p$ defined by $\gamma_s(x)=sx\,\,(s\in\Z_p^*, x\in\Q_p)$. It follows that $$L(\Q_p \rtimes \Z_p^*) \cong L(\Q_p) \rtimes_{\overline{\gamma}}\Z_p^*\cong L^\infty(\hat{\Q}_p)\rtimes_{\tilde{\gamma}}\Z_p^* \cong L^\infty(\Q_p) \rtimes_{\dot{\gamma}}\Z_p^*,$$
    where $\dot{\gamma}\colon \Z_p^* \curvearrowright L^\infty(\Q_p)$ is induced by the isomorphism $\Phi\colon L(\Q_p) \to L^\infty(\Q_p)$. We can check that $\dot{\gamma}_s=\beta_{s^{-1}}$ where $\beta\colon \Z_p^*\curvearrowright L^\infty(\Q_p)$ is obtained by $\gamma\colon \Z_p^* \curvearrowright \Q_p$ as a group measure space construction. That is, 
    $$(\dot{\gamma}_s(m(f))\xi)(x)=f(sx)\xi(x)\,\,(f\in L^\infty(\Q_p),\xi\in L^2(\Q_p),s\in \Z_p^*,x\in\Q_p).$$ Put $\Psi\colon L(\Q_p \rtimes \Z_p^*) \to L^\infty(\Q_p) \rtimes_{\dot{\gamma}}\Z_p^*$ be the isomorphism. By calculation, we have $\Psi(\lambda_\ell^{G_1}(x,1))=\pi_{\dot{\gamma}}(m(\theta(-x)))$ where $\theta\colon \Q_p\to\hat{\Q}_p$ is an isomorphism between topological groups. Hence, for $x\in \Q_p$, 
    $$\Psi\circ\alpha_{p^n}\circ\Psi^{-1}(\pi_{\dot{\gamma}}(m(\theta(x))))=\pi_{\dot{\gamma}}(m(\theta(p^{n}x))).$$
    Note that $\nai{y}{\theta(x)}=\nai{xy}{\theta(1)}$ for all $y\in \Q_p$, we see that $\theta(p^{n}x)=\theta(x)_{p^{n}}$ where $\theta(x)_{p^{n}}(y)=\theta(x)(p^{n}y)$. By Lemma \ref{dualgrpdensity}, we conclude that 
    $$\Psi\circ\alpha_{p^n}\circ\Psi^{-1}(\pi_{\dot{\gamma}}(m(f)))=\pi_{\dot{\gamma}}(m(f_{p^{n}}))$$ for all $f\in L^\infty(\Q
    _p)$. Since $\dot{\gamma}$ is a free action in the sense of \cite[Definition 2.1]{Sauvageot1977}, we also see that $Z(L^\infty(\Q_p) \rtimes_{\dot{\gamma}}\Z_p^*)=\pi_{\dot{\gamma}}(m(L^\infty(\Q_p))^{\dot{\gamma}})$ by \cite[Corollary 2.2]{Sauvageot1977}. Thus,
    \begin{align*}
        \Psi(Z(L(\Q_p \rtimes \Z_p^*))^{\alpha})=Z(L^\infty(\Q_p) \rtimes_{\dot{\gamma}}\Z_p^*)^{\Psi\circ\alpha\circ\Psi^{-1}} = \pi_{\dot{\gamma}}(m(L^\infty(\Q_p))^{\dot{\gamma}})^{\Psi\circ\alpha\circ\Psi^{-1}}.
    \end{align*}
    Take $f\in L^\infty(\Q_p)$ such that $\pi_{\dot{\gamma}}(m(f))$ is in the right hand side of the above equation. Then, $f$ satisfies $$f(sx)=f(x)\,\,(s\in \Z_p^*),\quad f(p^{n}x)=f(x)\,\,(n\in \Z)$$
    for all $x\in \Q_p$. Since for each $x\in \Q_p$ can be factorized as $x=p^ns$ where $n\in\Z,s\in \Z_p^*$, we conclude that $f(x)=f(1)$ for all $x\in \Q_p$. That is, $f=c\cdot1_{\Q_p}$ for some constant $c \in \C$. Therefore, $Z(L(\Q_p \rtimes \Z_p^*))^{\alpha}=\Psi^{-1}(\C)=\C$ holds. That is, the action is centrally ergodic. 
\end{example}

In this case, $L(G)$ admits a crossed product decomposition because of the construction of $G$, that is, $L(G)=L(\Q_p \rtimes \Q_p^*) \cong L(\Q_p) \rtimes \Q_p^*$ (see, e.g. \cite[Proposition 2.2]{Sutherland1978}). But, we use another decomposition $L(G) \cong L(G_1) \rtimes \Delta_G(G)$ in Example \ref{afineex}. Thus, our theorem also covers groups which do not admit genuine crossed product decompositions. 

\section{The S-invariant of the group von Neumann algebras}
Let $G$ be an almost unimodular group. In this section, we study the S-invariant $S(L(G))$ when $L(G)$ is a factor. First, we define a good section for twisted crossed product decomposition $L(G) \cong L(G_1) \rtimes_{\alpha,u}^\ell \Delta_G(G)$ as in Proposition \ref{cocycleresol}. Note that locally compact groups are assumed to be second countable as mentioned earlier.

\begin{definition}
    Let $G$ be an almost unimodular group. A section $\sigma\colon\Delta_G(G) \to G$ is inverse preserving if $\sigma(k^{-1})=\sigma(k)^{-1}\,\,(k\in \Delta_G(G))$ holds. 
\end{definition}

\begin{remark}\label{existenceofinvsec}
    Any almost unimodular group admits an inverse preserving section. Indeed, for arbitrary $\tilde{\sigma}\colon\Delta_G(G) \to G$, we can construct a new section $\sigma\colon\Delta_G(G) \to G$ defined by $\sigma(k)=\tilde{\sigma}(k), \sigma(k^{-1})=\tilde{\sigma}(k)^{-1}$ for $k \ge 1$. It can be shown that $\sigma$ is an inverse preserving section. Given an inverse preserving section, it follows that $\alpha^K=\beta$ and $c^K=c$ in Proposition \ref{cocycleresol}. Moreover, it follows that $\beta_k^{-1}=\beta_{k^{-1}}$ since $c(k,k^{-1})=e$ for all $k\in \Delta_G(G)$.
\end{remark}

We use the following Theorem due to Connes \cite{Connes1973}. We denote the set of projections of a von Neumann algebra $M$ by $M^P$.

\begin{theorem}[{\cite[Corollary 3.2.5(b)]{Connes1973}}]\label{ConnesSinv}
    Let $M$ be a factor and $\varphi$ be a normal faithful semifinite weight on $M$. Then, $$\textrm{S}(M)\backslash\{0\}=\bigcap_{0\ne e\in Z(M^{\varphi})^P}\textrm{Sp}(\Delta_{\varphi_e})\backslash\{0\}$$
    where $\varphi_e$ is a weight on $M_e=eMe$ defined by $\varphi_e(x)=\varphi(x)\,\,(x\in M_e).$
\end{theorem}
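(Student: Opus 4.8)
The plan is to transfer the entire statement to the level of the modular automorphism groups and then invoke Connes' theorem on the weight-independence of the modular spectrum. The first step is the classical correspondence between the modular operator and the modular flow: for any normal faithful semifinite (nfs) weight $\psi$ on a von Neumann algebra one has $\mathrm{Sp}(\Delta_\psi)\setminus\{0\}=\exp\mathrm{Sp}(\sigma^\psi)$, where $\sigma^\psi$ is the modular automorphism group and $\mathrm{Sp}(\sigma^\psi)\subseteq\R$ its Arveson spectrum. Since each $e\in Z(M^\varphi)^P$ lies in the centralizer, the reduced weight $\varphi_e$ has modular flow $\sigma^{\varphi_e}_t=\sigma^\varphi_t|_{M_e}$, so the right-hand side of the asserted identity equals $\exp\bigl(\bigcap_{0\neq e\in Z(M^\varphi)^P}\mathrm{Sp}(\sigma^\varphi|_{M_e})\bigr)$, whereas, taking as definition $\mathrm{S}(M)=\bigcap_\psi\mathrm{Sp}(\Delta_\psi)$ over all nfs weights $\psi$, the left-hand side equals $\exp\bigl(\bigcap_\psi\mathrm{Sp}(\sigma^\psi)\bigr)$. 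Because $\exp\colon\R\to(0,\infty)$ is a homeomorphism, hence commutes with intersections, it suffices to prove that these two exponents coincide.

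For the elementary inclusion, reduction invariance of the S-invariant gives $\mathrm{S}(M)\setminus\{0\}\subseteq\mathrm{Sp}(\Delta_{\varphi_e})\setminus\{0\}$ for each $e$: indeed $M_e=eMe$ is again a factor, $\varphi_e$ is nfs on it, $\mathrm{S}(M_e)\subseteq\mathrm{Sp}(\Delta_{\varphi_e})$ by definition, and $\mathrm{S}(M_e)=\mathrm{S}(M)$ since reducing a factor by a nonzero projection preserves its type and S-invariant. The substantive content is the reverse inclusion, which I would derive from Connes' theorem that the Connes spectrum $\Gamma(M):=\bigcap_{0\neq e\in (M^\varphi)^P}\mathrm{Sp}(\sigma^\varphi|_{M_e})$ is independent of $\varphi$ and satisfies $\bigcap_\psi\mathrm{Sp}(\sigma^\psi)=\Gamma(M)$. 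This is exactly the mechanism by which ``intersecting over all weights'' collapses to ``reducing one fixed weight by projections of its centralizer''. Its proof compares an arbitrary $\psi$ with the fixed $\varphi$ through the balanced weight $\theta=\varphi\oplus\psi$ on $M\vNtensor\B(\C^2)$, whose diagonal corners recover $\varphi$ and $\psi$ and whose centralizer ties their modular data together via the Connes cocycle $(D\psi:D\varphi)_t$; I would import this result rather than reprove it.

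It remains to replace arbitrary projections of the centralizer by central ones, i.e.\ to show $\bigcap_{0\neq e\in Z(M^\varphi)^P}\mathrm{Sp}(\sigma^\varphi|_{M_e})=\Gamma(M)$. The inclusion $\supseteq$ is automatic, as one intersects over a smaller family; for $\subseteq$ I would replace a general $e\in (M^\varphi)^P$ by its central support $z(e)\in Z(M^\varphi)$ and show, by a spectral-subspace argument, that the fullness of $e$ in the centralizer $M_{z(e)}^{\varphi_{z(e)}}$ forces $\mathrm{Sp}(\sigma^\varphi|_{M_e})=\mathrm{Sp}(\sigma^\varphi|_{M_{z(e)}})$, so that the central projections already realize the whole intersection. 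Applying $\exp$ to the resulting equality of exponents then yields the theorem. The main obstacle is the imported weight-independence of the Connes spectrum, whose balanced-weight proof is by far the deepest ingredient; the secondary difficulty is the central-support reduction, where one must check that full central support in the centralizer leaves the Arveson spectrum of the restricted modular flow unchanged.
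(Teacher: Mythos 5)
You should first be aware that the paper contains no proof of this statement at all: Theorem \ref{ConnesSinv} is quoted verbatim from Connes' thesis \cite[Corollary 3.2.5(b)]{Connes1973} and is used purely as a black box (its only role is in deriving Proposition \ref{Sinvformula}). So there is no internal argument to compare yours against; the relevant benchmark is Connes' original proof, and your sketch does follow its outline (translate modular spectra into Arveson spectra via $\mathrm{Sp}(\Delta_\psi)\setminus\{0\}=\exp\mathrm{Sp}(\sigma^\psi)$, compare weights through the balanced weight on $M\vNtensor\B(\C^2)$, pass between arbitrary and central projections of the centralizer). The problem is that, as a proof, your proposal is very close to circular: the result you import --- that $\Gamma(M)=\bigcap_{0\neq e\in(M^\varphi)^P}\mathrm{Sp}(\sigma^\varphi|_{M_e})$ is weight-independent and equals $\bigcap_\psi\mathrm{Sp}(\sigma^\psi)$ --- is, after exponentiating, exactly the statement $\mathrm{S}(M)\setminus\{0\}=\exp\Gamma(\sigma^\varphi)$, which differs from the theorem to be proved only by the central-support lemma. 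What you genuinely argue (the $\exp$ correspondence, the identity $\sigma^{\varphi_e}_t=\sigma^\varphi_t|_{M_e}$ for $e\in Z(M^\varphi)^P$, and the reduction $\mathrm{Sp}(\sigma^\varphi|_{M_e})=\mathrm{Sp}(\sigma^\varphi|_{M_{z(e)}})$) is correct but peripheral; the analytic substance is deferred to the citation. That is defensible --- it is what the paper itself does --- but it should be presented as a reduction of one form of Connes' theorem to another, not as a proof.

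There is also one concrete gap in your ``elementary'' inclusion. You justify $\mathrm{S}(M)\subseteq\mathrm{Sp}(\Delta_{\varphi_e})$ by $\mathrm{S}(M_e)=\mathrm{S}(M)$, asserting that reduction by a nonzero projection preserves the S-invariant. Reduction-invariance of $\mathrm{S}$ is not free: in the standard development it is deduced from this very corollary (or from the same balanced-weight machinery), so quoting it here risks a circle. It can be repaired independently in the cases that matter: if $M$ is semifinite, then $\mathrm{S}(M)=\{1\}=\mathrm{S}(M_e)$, since the trace gives $\mathrm{Sp}(\Delta_{\mathrm{tr}})=\{1\}$ and $1\in\mathrm{Sp}(\Delta_\psi)$ for every normal faithful semifinite weight $\psi$; and if $M$ is a $\sigma$-finite type III factor, then every nonzero projection is equivalent to $1$, so $M_e\cong M$ and the invariance is trivial. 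Without $\sigma$-finiteness (which the statement as written does not assume) this step needs additional care, though it is harmless for the paper's applications, where all preduals are separable.
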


Now, we shall show the following lemma.

\begin{lemma}\label{spectrumformula}
    Let $G$ be an almost unimodular group. We identify $L(G) \cong L(G_1)\rtimes_{\alpha,u}^\ell \Delta_G(G)$ with an inverse preserving section $\sigma\colon \Delta_G(G) \to G$. Then, for each nonzero $e\in Z(L(G)^{\varphi_G})^P=Z(L(G_1))^P$, $$\text{Sp}(\Delta_{(\varphi_G)_e})\backslash\{0\}=\overline{\{k\in\Delta_G(G):\alpha_k(e)e\ne0\}}\backslash\{0\}.$$
\end{lemma}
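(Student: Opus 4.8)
The plan is to compute the spectrum of the modular operator $\Delta_{(\varphi_G)_e}$ directly from the structure of the twisted crossed product. Since $e \in Z(L(G_1))^P$ and $L(G_1) = L(G)^{\varphi_G}$, the weight $(\varphi_G)_e$ on the reduced algebra $eL(G)e$ has its modular automorphism group governed by $\sigma^{\varphi_G}$ restricted to the corner. Using the twisted crossed product picture $L(G) \cong L(G_1) \rtimes_{\alpha,u}^\ell \Delta_G(G)$ with generators $I_\ell^\alpha(L(G_1))$ and $\lambda_\ell^u(k)$, I would first recall from Proposition~\ref{fullspectrum} that $\hat{\alpha}_p$ is conjugate to $\tilde{\sigma}_{-p}$, so that $\sigma_t^{\varphi_G}(\lambda_\ell^u(k)) = k^{it}\lambda_\ell^u(k)$ and $\sigma_t^{\varphi_G}|_{L(G_1)} = \mathrm{id}$. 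This tells me that the $\lambda_\ell^u(k)$ are eigenoperators for the modular flow with eigenvalue $k^{it}$, which is the source of the discrete spectral data $\Delta_G(G)$.

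\medskip\noindent\textbf{Eigenvalues from eigenoperators.}
The key idea is that $k \in \Delta_G(G)$ contributes to $\mathrm{Sp}(\Delta_{(\varphi_G)_e})$ precisely when there is a nonzero element of $eL(G)e$ of the form $e\,x\,\lambda_\ell^u(k)\,e$ (with $x \in L(G_1)$) that survives compression by $e$. Concretely, $\lambda_\ell^u(k)$ implements $\alpha_k$ on $L(G_1)$, so $e\,\lambda_\ell^u(k)\,e \ne 0$ iff $\alpha_k(e)e \ne 0$ (up to moving $e$ through the covariance relation $\lambda_\ell^u(k)I_\ell^\alpha(y)\lambda_\ell^u(k)^* = I_\ell^\alpha(\alpha_k(y))$). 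Since such an element satisfies $\sigma_t^{(\varphi_G)_e}(\cdot) = k^{it}(\cdot)$, it is an eigenvector of the modular operator with eigenvalue $k$, forcing $k \in \mathrm{Sp}(\Delta_{(\varphi_G)_e})$. Thus I expect $\{k : \alpha_k(e)e \ne 0\} \subseteq \mathrm{Sp}(\Delta_{(\varphi_G)_e})$, and since the spectrum is closed, the closure is contained as well. For the reverse inclusion, I would use that $eL(G)e$ is spanned (in a suitable weak sense) by the corners $e\,I_\ell^\alpha(x)\lambda_\ell^u(k)\,e$, apply the almost periodicity of $\varphi_G$ from \cite[Theorem 2.1]{guinto2025unimodulargroups} to decompose the modular flow into its eigenspaces indexed by $\Delta_G(G)$, and argue that any spectral value must come from a $k$ for which the corresponding corner is nonzero, i.e. $\alpha_k(e)e \ne 0$.

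\medskip\noindent\textbf{Main obstacle and how to handle it.}
The hard part will be making the ``spanning'' argument rigorous: passing from the algebraic generators $I_\ell^\alpha(x)\lambda_\ell^u(k)$ to a controlled spectral decomposition of the reduced weight, and ensuring the compression by $e$ does not destroy or create spectral values unexpectedly. Here the inverse-preserving hypothesis on $\sigma$ (Remark~\ref{existenceofinvsec}) becomes essential, since it guarantees $\beta_k^{-1} = \beta_{k^{-1}}$ and $c(k,k^{-1}) = e$, so that $\lambda_\ell^u(k)^* $ behaves symmetrically and the condition $\alpha_k(e)e \ne 0$ is equivalent to its ``reversed'' counterpart, keeping the eigenvalue set closed under the group structure inherited from $\Delta_G(G)$. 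I would organize the reverse inclusion by invoking almost periodicity to write $\Delta_{(\varphi_G)_e}$ as a direct integral (in fact a direct sum, since $\Delta_G(G)$ is discrete as an abstract group) over the eigenvalue decomposition, and then identify each nontrivial eigenspace with the nonvanishing of $\alpha_k(e)e$. Once both inclusions are established, intersecting over all nonzero $e \in Z(L(G_1))^P$ via Theorem~\ref{ConnesSinv} yields the stated formula for $\mathrm{S}(L(G))$.
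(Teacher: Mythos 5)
Your strategy is viable and genuinely different in technique from the paper's. The paper's proof is a bare-hands spatial computation: it identifies $\Delta_{\varphi_G}$ with multiplication by $\Delta_G$ on $L^2(G)$ and $J_{\varphi_G}$ with $\xi\mapsto\xi^*$, transports both through the explicit unitaries $V,W$ of Proposition \ref{cocycleresol} to $L^2(G_1\times\Delta_G(G))$, and shows that the compressed operator $\tilde{\Delta}I_\ell^\alpha(e)\tilde{J}I_\ell^\alpha(e)\tilde{J}$ acts on the fiber over $k$ as multiplication by $k\,\alpha_{k^{-1}}(e)e$; the forward inclusion then comes from explicit eigenvectors, and the reverse inclusion from an explicitly constructed, uniformly bounded resolvent $(k\alpha_{k^{-1}}(e)e-r)^{-1}$. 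You instead argue at the level of the algebra, through the eigenoperators $\lambda_\ell^u(k)$, the covariance relation $I_\ell^\alpha(e)\lambda_\ell^u(k)I_\ell^\alpha(e)=I_\ell^\alpha(e\alpha_k(e))\lambda_\ell^u(k)$, and the almost periodicity of $\varphi_G$. This is conceptually cleaner, avoids computing $\tilde{J}$, and would apply to any almost periodic weight with $e$ central in its centralizer, at the price of importing structure theory of almost periodic weights that the paper's proof bypasses entirely.

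Two steps in your sketch need genuine repair before this is a proof. First, you pass from the eigenoperator relation $\sigma_t^{(\varphi_G)_e}(y)=k^{it}y$ to ``$y$ is an eigenvector of the modular operator,'' but operators are not vectors: one needs $y\in\mathfrak{n}_{\varphi_G}$ to form $\Lambda_{\varphi_G}(y)$ in the GNS space, and $e\lambda_\ell^u(k)e$ itself need \emph{not} be square-integrable, since $\varphi_G(e\alpha_k(e))$ may be infinite. The repair is to choose $0\ne x\in\mathfrak{n}_{\varphi_G}\cap L(G_1)$ with $x\le e\alpha_k(e)$ (possible because $\varphi_G|_{L(G_1)}$ is a faithful semifinite trace) and verify $I_\ell^\alpha(x)\lambda_\ell^u(k)\in\mathfrak{n}_{\varphi_G}$ using the scaling identity $\varphi_G(\lambda_\ell^u(k)^*a\lambda_\ell^u(k))=k^{-1}\varphi_G(a)$. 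Second, the reverse inclusion---which you correctly flag as the main obstacle---rests on ``decompose into eigenspaces,'' and to make that rigorous you need three ingredients: (i) the identification $\Delta_{(\varphi_G)_e}=\Delta_{\varphi_G}\pi_{\varphi_G}(e)J_{\varphi_G}\pi_{\varphi_G}(e)J_{\varphi_G}$ on the compressed GNS space (which the paper states and uses); (ii) the fact that this compression commutes with $\Delta_{\varphi_G}^{it}$ because $e$ lies in the centralizer, so it preserves the eigenspace decomposition $\overline{\Lambda_{\varphi_G}(\mathfrak{n}_{\varphi_G}\cap L(G)_k)}$ furnished by almost periodicity, where $L(G)_k=I_\ell^\alpha(L(G_1))\lambda_\ell^u(k)$ is the $k$-eigenspace of the modular flow; and (iii) the fact that the spectrum of a diagonalizable positive operator is the closure of its eigenvalue set, so no spectral value outside $\overline{\{k:\alpha_k(e)e\ne0\}}$ can appear. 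With these supplied, your argument closes both inclusions and is a legitimate alternative to the paper's resolvent computation; as written, however, the harder half of the lemma is asserted rather than proved.
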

\begin{proof}
    Note that $\Delta_{\varphi_G}$ is the linear operator on the Hilbert space $L^2(L(G),\varphi_G)$ and we can identify this operator with the operator $\Delta$ on the Hilbert space $L^2(G)$, which has the domain $\mathcal{D}(\Delta)=\{\xi\in L^2(G) : \int_G|\Delta_G(x)\xi(x)|^2\, d\mu_G(x)<\infty\}$ and is defined by $(\Delta\xi)(x)=\Delta_G(x)\xi(x)\,(\xi\in \mathcal{D}(\Delta))$. That is, there exists a unitary operator $U\colon L^2(G) \to L^2(L(G),\varphi_G)$ which satisfies $U\xi=\pi_l(\xi)$ for each $\xi \in C_c(G)$ and $U^*\Delta_{\varphi_G}U=\Delta$. Here, we use the fact that $C_C(G)$ is the core of $\mathcal{D}(\Delta)$. Let $\pi_{\varphi_G}$ be the semicyclic representation of $L(G)$ with respect to $\varphi_G$. For $e \in Z(L(G)^{\varphi_G})^P$, $\Delta_{(\varphi_G)_e}$ is the linear operator on the Hilbert space $L^2(eL(G)e,(\varphi_G)_e)=\pi_{\varphi_G}(e)J_{\varphi_G}\pi_{\varphi_G}(e) J_{\varphi_G}L^2(G, \varphi_G)$ which satisfies $$\Delta_{(\varphi_G)_e}=\Delta_{\varphi_G}\pi_{\varphi_G}(e)J_{\varphi_G}\pi_{\varphi_G}(e) J_{\varphi_G}$$ on $\pi_{\varphi_G}(e)J_{\varphi_G}\pi_{\varphi_G}(e) J_{\varphi_G}L^2(G, \varphi_G)$. Here, $J_{\varphi_G}$ is the modular conjugation associated with $\varphi_G$. Therefore, it follows that $\textrm{Sp}(\Delta_{(\varphi_G)_e})\backslash\{0\}=\textrm{Sp}(\Delta_{\varphi_G}\pi_{\varphi_G}(e)J_{\varphi_G}\pi_{\varphi_G}(e) J_{\varphi_G})\backslash\{0\}.$ 
    Let $J$ be the modular conjugation on $L^2(G)$, that is, $J$ is the antilinear operator defined by $(J\xi)(x)=\Delta_G(x)^{-1/2}\overline{\xi(x^{-1})}=\xi^*(x)$ for all $\xi\in L^2(G)$. It can be shown that $U^*J_{\varphi_G}U=J$ and $U^*\pi_{\varphi_G}(e)U=e$, and hence $\textrm{Sp}(\Delta_{\varphi_G}\pi_{\varphi_G}(e)J_{\varphi_G}\pi_{\varphi_G}(e) J_{\varphi_G})\backslash\{0\}=\textrm{Sp}(\Delta eJeJ)\backslash\{0\}$ holds. 
    Let $V$ be the unitary operator from $L^2(G_1 \rtimes_{\beta,c} \Delta_G(G))$ to $L^2(G)$ obtained by the topological group isomorphism $G_1\rtimes_{\beta,c}\Delta_G(G) \cong G$ as in Proposition \ref{cocycleresol}. That is, $V\colon L^2(G_1 \rtimes_{\beta,c} \Delta_G(G)) \to L^2(G)$ satisfies $$(V^*\xi)(s,k)=\xi(s\sigma(k))\,\,(\xi\in L^2(G), s\in G_1, k\in\Delta_G(G)).$$
    Let $W$ be the unitary operator from $L^2(G_1 \times \Delta_G(G))$ to $L^2(G_1 \rtimes_{\beta,c} \Delta_G(G))$ which gives the spatial isomorphism $L(G_1) \rtimes_{\alpha,u}^\ell \Delta_G(G) \cong L(G_1 \rtimes_{\beta,c}\Delta_G(G))$ as in Proposition \ref{cocycleresol}. That is, $W\colon L^2(G_1 \times \Delta_G(G))\to L^2(G_1 \rtimes_{\beta,c} \Delta_G(G))$ satisfies 
    $$(W\xi)(s,k)=\xi(\beta_{k^{-1}}(s),k),\,\,(W^*\eta)(s,k)=\eta(\beta_{k^{-1}}^{-1}(s),k)$$ for all $\xi \in L^2(G_1 \times \Delta_G(G)), \eta\in L^2(G_1 \rtimes_{\beta,c} \Delta_G(G)), s\in G_1, k\in\Delta_G(G)$ and $W (L(G_1)\rtimes_{\alpha,u}^\ell \Delta_G(G))W^*=L(G_1\rtimes_{\beta,c}\Delta_G(G))$ holds. It follows that $U_0:=VW$ gives a spatial isomorphism $L(G_1) \rtimes_{\alpha,u}^\ell \Delta_G(G) \cong L(G)$.
    Thus, we see that $$\textrm{Sp}(\Delta eJeJ)\backslash\{0\}=\textrm{Sp}((U_0^*\Delta U_0)(U_0^* eU_0)(U_0^*JU_0)(U_0^*eU_0)(U_0^*JU_0))\backslash\{0\}.$$ Put $\tilde{\Delta}=U_0^*\Delta U_0$ with the domain $\mathcal{D}(\tilde{\Delta})=U_0^*\mathcal{D}(\Delta)$. For $\xi \in \mathcal{D}(\tilde{\Delta})$, 
    \begin{align*}
        (\tilde{\Delta}\xi)(s,k)=(W^*V^*\Delta VW\xi)(s,k)&=(\Delta VW\xi)(\beta_{k^{-1}}^{-1}(s)\sigma(k))\\&=k(VW\xi)(\beta_{k^{-1}}^{-1}(s)\sigma(k))\\&= k(W\xi)(\beta_{k^{-1}}^{-1}(s),k) \\&=k\cdot\xi(s,k).
    \end{align*}
    Moreover, put $\tilde{J}=U_0^*JU_0$. Then, for $\xi \in L^2(G_1 \times \Delta_G(G))$, we have 
    \begin{align*}
        (\tilde{J}\xi)(s,k)=(W^*V^*JVW\xi)(s,k)&= (JVW\xi)(\beta_{k^{-1}}^{-1}(s)\sigma(k)) \\&=(JVW\xi)(\sigma(k)s\sigma(k)^{-1}\sigma(k))
        \\&=(JVW\xi)(\sigma(k)s)
        \\&=k^{-1/2}\overline{(VW\xi)(s^{-1}\sigma(k^{-1}))}
        \\&=k^{-1/2}\overline{(W\xi)(s^{-1},k^{-1})}
        \\&=k^{-1/2}\overline{\xi(\beta_{k}(s^{-1}),k^{-1})}.
    \end{align*}
    Let $J_1$ be the modular conjugation on $L^2(G_1)$ and $v_k^*$ be the unitary on $L^2(G_1)$ for each $k\in \Delta_G(G)$ defined as in Proposition \ref{cocycleresol}, which satisfies $$(v_k^*\xi)(s)=(v_{k^{-1}}\xi)(s)=k^{-1/2}\xi(\beta_k(s))\,\,(\xi\in L^2(G_1),s\in G_1).$$
    It then follows that 
    $$(\tilde{J}\xi)(s,k)=(v_k^*\otimes1)(J_1\otimes1)\xi(s,k^{-1})=(J_1\otimes1)(v_k^*\otimes1)\xi(s,k^{-1}).$$
    Since $e\in L(G_1)$, we have $U_0^*eU_0=I_\ell^\alpha(e)$. Note that $J_1xJ_1=x^*$ for all $x\in Z(L(G_1))$, for $\xi\in L^2(G_1\times \Delta_G(G))$, 
    \begin{align*}
        (\tilde{J}I_\ell^\alpha(e)\tilde{J}\xi)(s,k)&=(v_k^*\otimes1)(J_1\otimes1)(I_\ell^\alpha(e)\tilde{J}\xi)(s,k^{-1})
        \\&=(v_k^*\otimes1)(J_1\otimes1)(\alpha_k(e)\otimes1)(\tilde{J}\xi)(s,k^{-1})
        \\&=(v_k^*\otimes1)(J_1\otimes1)(\alpha_k(e)\otimes1)(J_1\otimes1)(v_k\otimes1)\xi(s,k)
        \\&=(v_k^*\otimes1)(\alpha_k(e)\otimes1)(v_k\otimes1)\xi(s,k)
        \\&=(\alpha_k^{-1}(\alpha_k(e))\otimes1)\xi(s,k)=(e\otimes1)\xi(s,k).
    \end{align*}
    Now, we shall show that $\text{Sp}(\tilde{\Delta }I_\ell^\alpha(e)\tilde{J}I_\ell^\alpha(e)\tilde{J}) \backslash\{0\}=\overline{\{k\in\Delta_G(G):\alpha_k(e)e\ne0\}}\backslash\{0\}$. Take $k\in\Delta_G(G)$ such that $\alpha_k(e)e\ne0$. Since $\sigma$ is an inverse preserving section, we have $\alpha_{k^{-1}}=\alpha_{k}^{-1}$ and hence $\alpha_{k^{-1}}(e)e=\alpha_{k^{-1}}(\alpha_{k}(e)e)$ is a nonzero projection in $Z(L(G_1))$. Thus, there exists a nonzero $\xi_0\in L^2(G_1)$ such that $\alpha_{k^{-1}}(e)e\xi_0=\xi_0$. With the identification $L^2(G_1 \times \Delta_G(G)) \cong \ell^2(\Delta_G(G), L^2(G_1))$, let $\xi\in \ell^2(\Delta_G(G), L^2(G_1))$ be the function such that $\xi(k)=\xi_0$ and $\xi(l)=0\,\,(l\ne k)$. Then, we see that $$(I_\ell^\alpha(e)\tilde{J}I_\ell^\alpha(e)\tilde{J}\xi)(s,l)=(\alpha_{l^{-1}}(e)e\otimes1)\xi(s,l)=\xi(s,l).$$
    Moreover, we obtain $(\tilde{\Delta}\xi)(s,l)=k\xi(s,l)$, and hence $k$ is a point spectrum of $\tilde{\Delta}I_\ell^\alpha(e)\tilde{J}I_\ell^\alpha(e)\tilde{J}$. That is, $k \in\text{Sp}(\tilde{\Delta }I_\ell^\alpha(e)\tilde{J}I_\ell^\alpha(e)\tilde{J})$. Since a spectrum is closed, it follows that $$\overline{\{k\in\Delta_G(G):\alpha_k(e)e\ne0\}} \subseteq \text{Sp}(\tilde{\Delta }I_\ell^\alpha(e)\tilde{J}I_\ell^\alpha(e)\tilde{J}).$$ 
    To show the converse inclusion, take $r\in\R_{>0}$ which satisfies $r \notin \overline{\{k\in\Delta_G(G):\alpha_k(e)e\ne0\}}$. Then it follows that there exists a $\varepsilon>0$ such that $(r-\varepsilon,r+\varepsilon) \cap \{k\in\Delta_G(G):\alpha_k(e)e\ne0\} = \emptyset$. Since $\alpha_k(e)e=0$ if and only if $\alpha_{k^{-1}}(e)e=0$, we see that $(r-\varepsilon,r+\varepsilon) \cap \{k\in\Delta_G(G):\alpha_{k^{-1}}(e)e\ne0\} = \emptyset$. Define the operator $T_r\colon L^2(G_1\times \Delta_G(G))\to L^2(G_1\times \Delta_G(G))$ as follows: 
    \begin{align*}
        (T_r\xi)(s,k)=((k\alpha_{k^{-1}}(e)e-r)^{-1}\otimes 1)\xi(s,k)\,\,(\xi\in L^2(G_1\times \Delta_G(G))).
    \end{align*}
    If $\alpha_{k^{-1}}(e)e \ne0$, then $\|(k\alpha_{k^{-1}}(e)e-r)^{-1}\| \le |k-r|^{-1}$. Also, if $\alpha_{k^{-1}}(e)e = 0$, then $\|(k\alpha_{k^{-1}}(e)e-r)^{-1}\| =  |r|^{-1}$. By assumption, we have $\text{sup}_{k\in\Delta_G(G)}\|(k\alpha_{k^{-1}}(e)e-r)^{-1}\| \le \text{max}\{\varepsilon^{-1}, |r|^{-1}\}$. Therefore, we conclude that $T_r \in \B(L^2(G_1\times \Delta_G(G)))$ and $T_r=(\tilde{\Delta }I_\ell^\alpha(e)\tilde{J}I_\ell^\alpha(e)\tilde{J}-r)^{-1}$. Thus, $r \notin \text{Sp}(\tilde{\Delta }I_\ell^\alpha(e)\tilde{J}I_\ell^\alpha(e)\tilde{J})$. We have shown that 
    $$\text{Sp}(\Delta_{(\varphi_G)_e})\backslash\{0\}=\text{Sp}(\tilde{\Delta }I_\ell^\alpha(e)\tilde{J}I_\ell^\alpha(e)\tilde{J})\backslash\{0\}=\overline{\{k\in\Delta_G(G):\alpha_k(e)e\ne0\}}\backslash\{0\}$$ as desired.
\end{proof}

\begin{lemma}\label{intersetofspectrum}
    With the same notation as in Lemma \ref{spectrumformula}, we have $$\bigcap_{0\ne e\in Z(L(G_1))^P}\{k\in\Delta_G(G):\alpha_k(e)e\ne0\}=\{k\in\Delta_G(G):\alpha_k|_{Z(L(G_1))}=\text{id}\}.$$
\end{lemma}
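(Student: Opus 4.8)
The plan is to reduce the asserted set equality to the following pointwise (in $k$) equivalence: for a fixed $k \in \Delta_G(G)$, the automorphism $\theta := \alpha_k|_{Z(L(G_1))}$ of the abelian von Neumann algebra $Z := Z(L(G_1))$ is the identity if and only if $\theta(e)e \ne 0$ for \emph{every} nonzero projection $e \in Z^P$. Granting this equivalence, the left-hand side $\bigcap_{0 \ne e \in Z^P} \{k : \alpha_k(e)e \ne 0\}$ consists precisely of those $k$ for which $\alpha_k(e)e \ne 0$ holds for all nonzero projections $e$, which is exactly $\{k : \alpha_k|_{Z(L(G_1))} = \text{id}\}$, the right-hand side. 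So the whole lemma comes down to proving the equivalence for a single automorphism $\theta \in \text{Aut}(Z)$.

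The easy direction is immediate: if $\theta = \text{id}$, then for any nonzero projection $e$ we have $\theta(e)e = e \cdot e = e \ne 0$. This already yields the inclusion of the right-hand side into the left-hand side.

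For the reverse inclusion I argue by contraposition: assuming $\theta \ne \text{id}$, I want to produce a single nonzero projection $e \in Z^P$ with $\theta(e)e = 0$, witnessing that $k$ lies outside the intersection. Since the predual is separable, I identify $Z \cong L^\infty(X,\mu)$ for a standard Borel probability space $(X,\mu)$, under which $\theta$ is implemented by a non-singular Borel automorphism $T$ of $X$ (so $\theta(1_A) = 1_{T^{-1}A}$ modulo null sets), and the projections of $Z$ correspond exactly to indicator functions $1_A$ of Borel sets $A$. Under this dictionary $\theta(1_A)1_A = 1_{T^{-1}A \cap A}$, so it suffices to find a Borel set $A$ with $\mu(A) > 0$ and $\mu(T^{-1}A \cap A) = 0$. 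Since $\theta \ne \text{id}$, the set $E = \{x : T(x) \ne x\}$ has positive measure. Fixing a compatible Polish metric $d$ on $X$ and setting $E_n = \{x : d(x,Tx) > 1/n\}$, we have $E = \bigcup_n E_n$ (an increasing union), so some $E_n$ has positive measure. Covering $X$ by countably many Borel sets of $d$-diameter $< 1/n$ and intersecting with $E_n$, some piece $A$ has positive measure; since $\text{diam}(A) < 1/n$, for $x \in A \subseteq E_n$ we get $d(x,Tx) > 1/n > \text{diam}(A)$, forcing $Tx \notin A$, i.e. $TA \cap A = \emptyset$, equivalently $T^{-1}A \cap A = \emptyset$. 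Then $e := 1_A$ is a nonzero projection with $\theta(e)e = 0$. Crucially, this witness $e$ is genuinely an indicator function, hence an honest element of $Z(L(G_1))^P$, which is exactly what the statement with the superscript $P$ demands.

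The main obstacle is the hard direction, specifically the extraction of a ``wandering'' nonzero projection from a nontrivial automorphism. The crux is the standard-Borel/separability structure of $Z$, which is what powers the diameter-based covering argument that produces a positive-measure set displaced off itself by $T$; without separability a nontrivial automorphism need not admit such a set. Secondary care is needed only to invoke the standard realization of automorphisms of separably-acting abelian von Neumann algebras by non-singular Borel transformations $T$, bijective modulo null sets, so that the set identity $\theta(1_A)1_A = 1_{T^{-1}A \cap A}$ is legitimate; this is routine and I would cite it rather than reprove it.
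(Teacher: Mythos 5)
Your proof is correct, and it takes a genuinely different route from the paper. The paper follows \cite[Lemma 9.5]{takesaki1973duality}: it identifies $A=Z(L(G_1))$ with $C(\Omega)$ via the Gelfand spectrum, pulls $\alpha_k$ back to a homeomorphism $\alpha_k^*$ of $\Omega$, picks a point $\omega_0$ with $\alpha_k^*(\omega_0)\ne\omega_0$, chooses an open $U\ni\omega_0$ with $U\cap\alpha_k^*(U)=\emptyset$, takes a Urysohn function $a$ supported in $U$ with $\alpha_k(a)a=0$, and finally passes to the spectral projection $e=1_{[1/2,1]}(a)$ to get $\alpha_k(e)e=0$. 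You instead invoke the standing separable-predual assumption of \S 3--\S 5 to realize $Z(L(G_1))\cong L^\infty(X,\mu)$ over a standard probability space, implement $\theta=\alpha_k|_{Z(L(G_1))}$ by a non-singular point transformation $T$ (von Neumann's point-realization theorem, correctly cited rather than reproved), and extract a positive-measure set $A$ with $T^{-1}A\cap A=\emptyset$ by the metric covering argument; note that the only implication you need about $E=\{x:Tx\ne x\}$ is the trivial contrapositive ($\mu(E)=0$ forces $\theta=\mathrm{id}$), so there is no hidden gap there. The trade-off: the paper's topological argument needs no separability and works for an arbitrary abelian von Neumann algebra, at the cost of working on the hyperstonean spectrum and a spectral-calculus step (where, incidentally, the paper's claim $e\le a$ should read $e\le 2a$, which still gives $\alpha_k(e)e\le 4\,\alpha_k(a)a=0$); your argument is more concrete and produces the witnessing projection directly as an indicator function, but it leans on the nontrivial point-realization theorem and is restricted to the separable setting --- a restriction that is harmless here precisely because the paper assumes separable preduals throughout these sections.
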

\begin{proof}
    We proceed with a proof similar to that of \cite[Lemma 9.5]{takesaki1973duality}. It is clear that $\{k\in\Delta_G(G):\alpha_k|_{Z(L(G_1))}=\text{id}\} \subseteq \bigcap_{0\ne e\in Z(L(G_1))^P}\{k\in\Delta_G(G):\alpha_k(e)e\ne0\}$. We shall show that if $\alpha_k|_{Z(L(G_1))}\ne \text{id}$, then there exists a nonzero $e \in Z(L(G_1))^P$ which satisfies $\alpha_k(e)e=0$ for each $k\in \Delta_G(G)$. Denote $A=Z(L(G_1))$ for simplicity. Let $\Omega$ be the spectrum of $A$. Since $A$ is unital, $\Omega$ is compact. We identify $A$ with $C(\Omega)$. For each $k\in \Delta_G(G)$, we define the automorphism $\alpha_k^*$ on $\Omega$ as $\alpha_k(a)(\omega)=a(\alpha_k^*(\omega))\,\,(a\in A, \omega\in \Omega)$. That is, $\alpha_k^*(\omega)=\omega \circ \alpha_k\,\,(k\in \Delta_G(G), \omega \in \Omega)$. In this setting, it follows that $\alpha_k^*$ is continuous for each $k\in \Delta_G(G)$. Take $k\in \Delta_G(G)$ such that $\alpha_k|_A \ne \text{id}$. Then $\alpha_k^*\ne \text{id}$ holds. Therefore, there exists a $\omega_0$ which satisfies $\alpha_k^*(\omega_0) \ne \omega_0$. We can take an open neighborhood $U$ of $\omega_0$ such that $U \cap \alpha_k^*(U) = \emptyset$. Let $a\in A \cong C(\Omega)$ be a positive element which satisfies $a(\omega_0)=1, a(U^c)=0$ and $0 \le a \le1$. Then it follows that $\alpha_k(a)a=0$. Put $e=1_{[1/2,1]}(a)$ be the spectral projection of $a$. We see that $e \le a$ and hence $\alpha_k(e)e=0$ holds.
\end{proof}

\begin{definition}
    For a locally compact group $G$, we say that $\Delta_G(G)$ is relatively discrete if the relative topology $\Delta_G(G) \subseteq \R_+$ is discrete. Equivalently, for each $k\in \Delta_G(G)$, there exists a $\varepsilon_k>0$ such that $(k-\varepsilon_k, k+\varepsilon_k)\cap \Delta_G(G)=\{k\}$.
\end{definition}

\begin{remark}
    For any multiplicative subgroup $K \subseteq \R_+$, if there are two elements $\lambda, \tilde{\lambda} \in K$ such that $\frac{\log \lambda}{\log \tilde{\lambda}}\notin \Q$, then it can be shown that $K$ is dense in $\R_{\ge0}$. Therefore, it follows that $K \subseteq \R_+$ is relatively discrete if and only if $K$ is singly generated. Moreover, we see that $K$ is not relatively discrete if and only if $K$ is dense in $\R_{\ge0}$.
\end{remark}

\begin{remark}
    Let $G$ be a locally compact group such that $\Delta_G(G)$ is relatively discrete. Then, by the above remark, there exists $\varepsilon >0$ such that $\Delta_G(G) \cap (1- \varepsilon, 1 + \varepsilon)=\{1\}$. Since $\Delta_G \colon G \to \R_+$ is continuous, $G_1 = \{g\in G : \Delta_G(g)=1\}=\Delta_G^{-1}(\{ 1\})=\Delta_G^{-1}((1- \varepsilon, 1 + \varepsilon))$ is open in $G$. Hence, we conclude that $G$ is an almost unimodular group.
\end{remark}

\begin{proposition}\label{Sinvformula}
     Let $G$ be an almost unimodular group. We identify $L(G) \cong L(G_1)\rtimes_{\alpha,u}^\ell \Delta_G(G)$ with an inverse preserving section $\sigma\colon \Delta_G(G) \to G$. If $L(G)$ is a factor then,
     \begin{align*}
         \text{S}(L(G))\backslash\{0\}&=\bigcap_{0\ne e\in Z(L(G_1))^P}\overline{\{k\in\Delta_G(G):\alpha_k(e)e\ne0\}}\backslash\{0\}
         \\&\supseteq \overline{\{k\in\Delta_G(G):\alpha_k|_{Z(L(G_1))}=\text{id}\}}\backslash\{0\}.
     \end{align*}
     Moreover, if $\Delta_G(G)$ is relatively discrete, then 
     $$\text{S}(L(G))\backslash\{0\}=\{k\in\Delta_G(G):\alpha_k|_{Z(L(G_1))}=\text{id}\} \backslash\{0\}.$$
\end{proposition}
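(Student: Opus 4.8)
The plan is to assemble the statement from Connes' formula for the $S$-invariant (Theorem \ref{ConnesSinv}) together with the two computational lemmas already established, reducing everything to an elementary comparison of closures. First I would invoke Theorem \ref{ConnesSinv} with $M = L(G)$, which is a factor by hypothesis, and $\varphi = \varphi_G$, giving
$$\text{S}(L(G))\setminus\{0\} = \bigcap_{0\ne e\in Z(L(G)^{\varphi_G})^P} \text{Sp}(\Delta_{(\varphi_G)_e})\setminus\{0\}.$$
Using the identification $L(G)^{\varphi_G} \cong L(G_1)$, so that $Z(L(G)^{\varphi_G})^P = Z(L(G_1))^P$, and substituting the value of each spectrum supplied by Lemma \ref{spectrumformula}, the first (unconditional) equality of the proposition follows immediately.

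For the inclusion $\supseteq$, I would use the elementary fact that the intersection of closures contains the closure of the intersection. Writing $S_e = \{k\in\Delta_G(G):\alpha_k(e)e\ne 0\}$, one has $\bigcap_e \overline{S_e} \supseteq \overline{\bigcap_e S_e}$; by Lemma \ref{intersetofspectrum} the right-hand side equals $\overline{\{k\in\Delta_G(G):\alpha_k|_{Z(L(G_1))}=\text{id}\}}$, and deleting $\{0\}$ throughout yields the stated inclusion.

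Finally, for the relatively discrete case I would show that the closures occurring in the first equality are superfluous, so that the inclusion sharpens to an equality. When $\Delta_G(G)$ is relatively discrete it is singly generated (equivalently, of the form $\{\lambda^n : n\in\Z\}$ for some $\lambda\ge 1$), as recorded in the Remark preceding the statement; such a set is discrete in $\R_+$ and its only possible accumulation point in $\R$ is $0$. Hence every subset $S_e\subseteq\Delta_G(G)$ satisfies $\overline{S_e}\setminus\{0\} = S_e$. Substituting this into the first equality and applying Lemma \ref{intersetofspectrum} gives
$$\text{S}(L(G))\setminus\{0\} = \bigcap_{0\ne e\in Z(L(G_1))^P} S_e = \{k\in\Delta_G(G):\alpha_k|_{Z(L(G_1))}=\text{id}\}\setminus\{0\},$$
the final $\setminus\{0\}$ being harmless since $0\notin\Delta_G(G)$.

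The analytic content is carried entirely by Connes' theorem and the two lemmas; the only step requiring genuine care is the last one, where one must verify that a singly-generated multiplicative subgroup of $\R_+$ accumulates only at $0$ in $\R$, so that passing to closures does not enlarge the sets $S_e$ away from $0$. That is the main---though minor---obstacle.
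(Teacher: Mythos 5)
Your proof is correct and is essentially the paper's own argument: the paper's proof is precisely the one-line combination of Theorem \ref{ConnesSinv}, Lemma \ref{spectrumformula} and Lemma \ref{intersetofspectrum}, and your write-up just makes explicit the two set-theoretic steps it leaves implicit (closure of intersection versus intersection of closures, and the fact that in the relatively discrete case a subset of $\{\lambda^n : n\in\Z\}$ can only accumulate at $0$, so the closures are harmless after deleting $\{0\}$).
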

\begin{proof}
    Theorem \ref{ConnesSinv}, Lemma \ref{spectrumformula} and Lemma \ref{intersetofspectrum} implies the desired formula.
\end{proof}

There is a characterization of semifiniteness of the group von Neumann algebras of almost unimodular groups.

\begin{proposition}\label{sfnessbycocycleact}
    Let $G$ be an almost unimodular group. We identify $L(G) \cong L(G_1)\rtimes_{\alpha,u}^\ell \Delta_G(G)$ with an arbitrary section $\sigma\colon \Delta_G(G) \to G$. Then, the following conditions are equivalent. 
    \begin{enumerate}
        \item [(i)] $L(G)$ is semifinite.
        \item [(ii)] There exists a one parameter unitary group $(v(t))_{t\in \R} \subseteq Z(L(G_1))$ which satisfies $\alpha_k(v(t))=k^{-it}v(t)\,\,(k\in \Delta_G(G)).$
    \end{enumerate}
\end{proposition}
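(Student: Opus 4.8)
The plan is to characterize the semifiniteness of $L(G) \cong L(G_1)\rtimes_{\alpha,u}^\ell \Delta_G(G)$ via the vanishing of Connes' T-invariant obstruction, equivalently via the existence of a one-parameter unitary group cocycle that trivializes the modular flow on the center. Recall from the computations preceding this proposition (see Proposition \ref{fullspectrum} and Lemma \ref{spectrumformula}) that the dual action $\hat{\alpha}$ of $\widehat{\Delta_G(G)}$ is conjugate to the point-modular extension $\tilde{\sigma}$ of the modular flow $\sigma^{\varphi_G}$, and that the modular operator acts on $L^2(G_1\times\Delta_G(G))$ by $(\tilde{\Delta}\xi)(s,k)=k\cdot\xi(s,k)$. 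The key structural fact is that $L(G_1)\cong L(G)^{\varphi_G}$ is precisely the centralizer of the Plancherel weight, so $\varphi_G$ restricted to $L(G_1)$ is a faithful normal semifinite trace, and the cocycle action $(\alpha,u)$ encodes how $\varphi_G$ fails to be tracial.

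First I would recall the standard criterion: a von Neumann algebra $M$ equipped with a faithful normal semifinite weight $\varphi$ whose centralizer $M^\varphi$ carries a trace is semifinite if and only if there is a faithful normal semifinite trace on $M$, which by the Radon--Nikodym/Pedersen--Takesaki theory happens exactly when the modular flow $\sigma^\varphi$ is inner after perturbation by a positive operator affiliated with the center. Concretely, in the almost periodic setting the modular flow $\sigma_t^{\varphi_G}$ acts on the generating unitaries by $\sigma_t^{\varphi_G}(\lambda_\ell^u(k))=k^{it}\lambda_\ell^u(k)$ while fixing $I_\ell^\alpha(L(G_1))$ pointwise. The plan is to show that $L(G)$ is semifinite if and only if this flow can be written as $\sigma_t^{\varphi_G}=\mathrm{Ad}(V_t)$ for a $\sigma^{\varphi_G}$-cocycle $V_t$ lying in the centralizer $L(G_1)$, and then to unwind what such a cocycle means for the central elements.

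The concrete translation is the crux. I would posit the candidate one-parameter family $v(t)\in Z(L(G_1))$ and verify that the perturbed weight $\psi_t$, or equivalently the family $w(t):=v(t)\lambda_\ell^u(\cdot)$-type unitaries, implements a trace. The relation $\alpha_k(v(t))=k^{-it}v(t)$ should arise as follows: for $v(t)\in Z(L(G_1))$ the element $u_t:=v(t)I_\ell^\alpha(\cdot)$ commutes with everything in $L(G_1)$, so the only nontrivial requirement is its interaction with $\lambda_\ell^u(k)$; using the covariance relation $\lambda_\ell^u(k)I_\ell^\alpha(x)\lambda_\ell^u(k)^*=I_\ell^\alpha(\alpha_k(x))$ one computes that conjugating $v(t)$ by $\lambda_\ell^u(k)$ produces $\alpha_k(v(t))$, and matching this against the scalar $k^{it}$ from the modular flow forces precisely $\alpha_k(v(t))=k^{-it}v(t)$. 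For the forward direction (i)$\implies$(ii), semifiniteness gives a trace $\tau$, hence a Radon--Nikodym cocycle $[D\varphi_G:D\tau]_t=v(t)$ which, because $\tau$ and $\varphi_G$ share the centralizer $L(G_1)$ and $\tau$ is central there, must lie in $Z(L(G_1))$; its cocycle identity combined with $\sigma_t^{\varphi_G}$ being trivial on $L(G_1)$ shows it is a one-parameter group, and the transformation law under $\alpha_k$ follows from the explicit modular flow. For (ii)$\implies$(i), given such $v(t)$ I would define the perturbed weight and check via Pedersen--Takesaki (Proposition \ref{TakTats}) that $\sigma_t^{\varphi_G}\circ\mathrm{Ad}(v(t)^*)$ is trivial, yielding a trace.

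The main obstacle I anticipate is carefully identifying the cocycle $v(t)$ as genuinely central in $L(G_1)$ rather than merely landing in $L(G_1)$, and verifying the one-parameter group property $v(s+t)=v(s)\sigma_s^{\varphi_G}(v(t))=v(s)v(t)$ (the simplification using triviality of $\sigma^{\varphi_G}$ on $L(G_1)$). One must be attentive that $[D\varphi_G:D\tau]_t$ is a priori only a $\sigma^{\varphi_G}$-cocycle valued in $M$; showing it is a central \emph{unitary group} requires that $\tau$ be chosen $\sigma^{\varphi_G}$-invariant and that both weights restrict compatibly to $L(G_1)$. The transformation law under $\alpha_k$ is then a direct computation using the explicit form of the modular flow established in the preceding sections, so once the cocycle is correctly located the rest is bookkeeping.
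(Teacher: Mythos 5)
Your skeleton is the paper's: both directions run through the standard equivalence ``$L(G)$ is semifinite $\iff \sigma^{\varphi_G}$ is inner, implemented by a one-parameter unitary group in $L(G)$'', the implementing group is then located in $Z(L(G)^{\varphi_G})=Z(L(G_1))$, and the relation $\alpha_k(v(t))=k^{-it}v(t)$ is read off by combining the covariance relation $\lambda_\ell^u(k)I_\ell^\alpha(x)\lambda_\ell^u(k)^*=I_\ell^\alpha(\alpha_k(x))$ with $\sigma_t^{\varphi_G}(\lambda_\ell^u(k))=k^{it}\lambda_\ell^u(k)$. Your (ii)$\implies$(i) --- verifying that $\Ad(I_\ell^\alpha(v(t)))$ agrees with the modular flow on the generators $I_\ell^\alpha(L(G_1))$ and $\lambda_\ell^u(\Delta_G(G))$ --- is exactly the paper's argument.

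There is, however, a genuine gap in your (i)$\implies$(ii): the justification you give for the crucial fact that $v(t)$ is \emph{central} in $L(G_1)$ is wrong. You claim $v(t)=[D\varphi_G:D\tau]_t$ lies in $Z(L(G_1))$ ``because $\tau$ and $\varphi_G$ share the centralizer $L(G_1)$'', but the centralizer of a faithful normal semifinite trace $\tau$ is all of $L(G)$, not $L(G_1)$, so the premise is false; and the repair you anticipate (choosing $\tau$ to be $\sigma^{\varphi_G}$-invariant, ``restricting compatibly to $L(G_1)$'') is not what is needed. The correct argument is short and uses nothing about $\tau$ except traciality: since $\sigma^\tau=\id$, the chain rule gives $\sigma_t^{\varphi_G}=\Ad(v(t))$, and the cocycle identity reads $v(s+t)=v(s)\sigma_s^\tau(v(t))=v(s)v(t)$ --- note that $[D\varphi_G:D\tau]$ is a $\sigma^\tau$-cocycle, not a $\sigma^{\varphi_G}$-cocycle as you wrote, which is precisely why the one-parameter group property is automatic. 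Commutativity of this group then yields $\sigma_t^{\varphi_G}(v(s))=v(t)v(s)v(t)^*=v(s)$, so $v(s)\in L(G)^{\varphi_G}=L(G_1)$; and since $\sigma_t^{\varphi_G}$ is trivial on its own centralizer, $v(t)xv(t)^*=\sigma_t^{\varphi_G}(x)=x$ for every $x\in L(G_1)$, forcing $v(t)\in L(G_1)\cap L(G_1)'=Z(L(G_1))$. This ordering (group property first, then membership in the centralizer, then centrality) is exactly how the paper argues, except that it invokes innerness of the modular flow directly rather than producing the implementing group via $[D\varphi_G:D\tau]$. A final, minor point: in (ii)$\implies$(i), Proposition \ref{TakTats} (a uniqueness criterion for weights) is not the tool that converts ``$\sigma^{\varphi_G}$ is inner'' into semifiniteness; agreement of two normal automorphisms on a generating set needs no such criterion, and the conclusion that a trace exists is Takesaki's theorem that a von Neumann algebra is semifinite if and only if the modular flow of one (equivalently, any) faithful normal semifinite weight is inner.
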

\begin{proof}
    We provide the proof following \cite[Theorem 8.6]{takesaki1973duality}. 
    \par
    $\text{(i)}\implies \text{(ii)}.$ By the assumption, there exists a one parameter unitary group $(v(t))_{t\in\R}$ which satisfies $\sigma_t^{\varphi_G}=\text{Ad}(v(t))$. Then, we have $$\sigma_t^{\varphi_G}(v(s))=v(t)v(s)v(t)^*=v(t+s-t)=v(s)\,\,(t,s\in \R).$$ Therefore, $(v(t))\subseteq L(G)^{\varphi_G} \cong L(G_1)$ holds. Moreover, we obtain $$v(t)xv(t)^*=\sigma_t^{\varphi_G}(x)=x\,\,(x\in L(G)^{\varphi_G} \cong L(G_1))$$ for all $t\in \R$. We conclude that $(v(t))_{t\in\R} \subseteq Z(L(G_1))$. With the identification $L(G) \cong L(G_1)\rtimes_{\alpha,u}^\ell \Delta_G(G)$, put $w(t)=\Phi(v(t))=I_\ell^\alpha(v(t))\,\,(t\in \R)$ where $\Phi\colon L(G) \to L(G_1)\rtimes_{\alpha,u}^\ell \Delta_G(G)$ is the isomorphism. If we set $\psi=\varphi_G\circ \Phi^{-1}$, then $\sigma_t^{\psi}=\Phi\circ \sigma_t^{\varphi_G}\circ\Phi^{-1}$ and hence $\sigma_t^{\psi}=\text{Ad}(w(t))$ holds. We calculate $$\sigma_t^{\psi}(\lambda_\ell^u(k))=\Phi(\sigma_t^{\varphi_G}(\lambda_\ell^{G}(\sigma(k^{-1})^{-1})))=\Delta_G(\sigma(k^{-1})^{-1})^{it}\Phi(\lambda_\ell^{G}(\sigma(k^{-1})^{-1}))=k^{it}\lambda_\ell^u(k).$$ Hence, $$w(t)\lambda_\ell^u(k)^*w(t)^*=\sigma_t^{\psi}(\lambda_\ell^u(k)^*)=\sigma_t^{\psi}(\lambda_\ell^u(k))^*=k^{-it}\lambda_\ell^u(k)^*$$
    holds. Therefore, by the covariant relation, we have $$I_\ell^{\alpha}(\alpha_k(v(t)))w(t)^*=\lambda_\ell^u(k)w(t)\lambda_\ell^u(k)^*w(t)^*=k^{-it}\lambda_\ell^u(k)\lambda_\ell^u(k)^*=k^{-it}.$$ We conclude that $I_\ell^{\alpha}(\alpha_k(v(t)))=k^{-it}u(t)=I_\ell^\alpha(k^{-it}v(t))$ and hence $\alpha_k(v(t))=k^{-it}v(t)$.
    \par
    $\text{(ii)}\implies \text{(i)}.$ Put $w(t)=I_\ell^\alpha(v(t)).$ Then, since $(v(t))_{t\in \R} \subseteq Z(L(G_1))$, we have $$\text{Ad}(w(t))(I_\ell^\alpha(x))=I_\ell^\alpha(v(t)xv(t)^*)=I_\ell^\alpha(x)\,\,(x\in L(G_1)).$$
    Moreover, we calculate $$w(t)\lambda_\ell^u(k)w(t)^*\lambda_\ell^u(k)^*=w(t)I_\ell^\alpha(\alpha_k(v(t)))^*=k^{it}w(t)w(t)^*=k^{it}.$$
    Thus, it follows that $\text{Ad}(w(t))(\lambda_\ell^u(k))=k^{it}\lambda_\ell^u(k).$ 
    With the same notation in the proof of $\text{(i)}\implies \text{(ii)}$, we see that
    \begin{align*}
        \begin{cases}
        \sigma_t^{\psi}(I_\ell^\alpha(x))=\Phi(\sigma_t^{\varphi_G}(x))=I_\ell^\alpha(x)=\text{Ad}(w(t))(I_\ell^\alpha(x))\,\,(x\in L(G_1))
        \\
        \sigma_t^{\psi}(\lambda_\ell^u(k))=k^{it}\lambda_\ell^u(k)=\text{Ad}(w(t))(\lambda_\ell^u(k))\,\,(k\in \Delta_G(G)).
        \end{cases}
    \end{align*}
    Therefore, we conclude that $\sigma_t^{\psi}=\text{Ad}(w(t))\,\,(t\in \R)$ and hence $L(G) \cong L(G_1)\rtimes_{\alpha,u}^\ell \Delta_G(G)$ is semifinite.
\end{proof}

Similarly, we can show the following proposition. Since the proof is essentially identical, we leave it to the reader. Here, $\text{T}(M)$ is the T-invariant of a von Neumann algebra $M$. 

\begin{proposition}
    With the same notation as in Proposition \ref{sfnessbycocycleact}, the following conditions are equivalent.
    \begin{enumerate}
        \item [(i)] $t \in \text{T}(L(G))$.
        \item [(ii)] There exists a unitary $v \in Z(L(G_1))$ which satisfies $\alpha_k(v)=k^{-it}v\,\,(k\in \Delta_G(G)).$
    \end{enumerate}
\end{proposition}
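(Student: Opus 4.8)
The plan is to follow the proof of Proposition \ref{sfnessbycocycleact} almost verbatim, replacing the one--parameter unitary group by a single implementing unitary and carrying along the fixed time $t$. Throughout I identify $L(G)\cong L(G_1)\rtimes_{\alpha,u}^\ell\Delta_G(G)$ via the isomorphism $\Phi$, write $\psi=\varphi_G\circ\Phi^{-1}$, and use the explicit form of the modular flow established in Proposition \ref{sfnessbycocycleact}: $\sigma_t^\psi(I_\ell^\alpha(x))=I_\ell^\alpha(x)$ for $x\in L(G_1)$ and $\sigma_t^\psi(\lambda_\ell^u(k))=k^{it}\lambda_\ell^u(k)$ for $k\in\Delta_G(G)$. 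Recall that $t\in\text{T}(L(G))$ means exactly that $\sigma_t^{\varphi_G}$, equivalently $\sigma_t^\psi$, is inner. I also use the twisted relation $\lambda_\ell^u(k)\lambda_\ell^u(l)=I_\ell^\alpha(u(k,l))\lambda_\ell^u(kl)$, which follows directly from the defining formula for $\lambda_\ell^u$ and the $2$--cocycle identity for $u$.

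For $(ii)\implies(i)$ I would argue as in the corresponding part of Proposition \ref{sfnessbycocycleact}. Given a unitary $v\in Z(L(G_1))$ with $\alpha_k(v)=k^{-it}v$, set $w=I_\ell^\alpha(v)$. Since $v\in Z(L(G_1))$ we get $\text{Ad}(w)(I_\ell^\alpha(x))=I_\ell^\alpha(vxv^*)=I_\ell^\alpha(x)$, and using the covariant relation $\lambda_\ell^u(k)I_\ell^\alpha(x)\lambda_\ell^u(k)^*=I_\ell^\alpha(\alpha_k(x))$ together with $\alpha_{k}(v)=k^{-it}v$ a one--line computation gives $\text{Ad}(w)(\lambda_\ell^u(k))=k^{it}\lambda_\ell^u(k)$. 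Thus $\text{Ad}(w)$ agrees with $\sigma_t^\psi$ on the generators $I_\ell^\alpha(L(G_1))$ and $\lambda_\ell^u(\Delta_G(G))$, so $\sigma_t^\psi=\text{Ad}(w)$ and $t\in\text{T}(L(G))$. This direction presents no difficulty.

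For $(i)\implies(ii)$ I would start from a unitary $w\in L(G_1)\rtimes_{\alpha,u}^\ell\Delta_G(G)$ with $\sigma_t^\psi=\text{Ad}(w)$. Because $\sigma_t^\psi$ fixes the centralizer $I_\ell^\alpha(L(G_1))$ pointwise, $w$ commutes with $I_\ell^\alpha(L(G_1))$. Expanding $w=\sum_{k\in\Delta_G(G)}I_\ell^\alpha(w_k)\lambda_\ell^u(k)$ as a Fourier series (legitimate since $\Delta_G(G)$ is discrete) and comparing Fourier coefficients, the commutation with the base yields $xw_k=w_k\alpha_k(x)$ for all $x\in L(G_1)$; at $k=e$ this says $w_e\in Z(L(G_1))$. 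Setting $v:=w_e=E(w)$, where $E$ is the canonical $\psi$--preserving conditional expectation onto $I_\ell^\alpha(L(G_1))$, and comparing Fourier coefficients in $w\lambda_\ell^u(l)=l^{it}\lambda_\ell^u(l)w$ while using the normalization $u(e,l)=u(l,e)=1_{L(G_1)}$, the term carrying $w_e$ gives $w_e=l^{it}\alpha_l(w_e)$, that is $\alpha_l(v)=l^{-it}v$ for every $l\in\Delta_G(G)$. This is precisely the desired eigen--relation, and $v\in Z(L(G_1))$.

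The main obstacle is to guarantee that $v$ may be taken to be a genuine \emph{unitary}. In the semifinite setting of Proposition \ref{sfnessbycocycleact} unitarity came for free, since there $v(t)$ belonged to a one--parameter unitary group implementing the whole flow; here $v=E(w)$ is a priori only a contraction and could in principle vanish. The relation $\alpha_l(v)=l^{-it}v$ shows $|v|\in Z(L(G_1))^\alpha$, so that the support projection of $v$ is central and $\alpha$--invariant. When the cocycle action is centrally ergodic (equivalently, by Theorem \ref{tdlcfactority}, when $L(G)$ is a factor) this forces $|v|$ to be a nonzero scalar times a projection, and normalizing yields the required unitary at once. In general I would build the unitary by promoting the polar part of $v$ on its ($\alpha$--invariant, central) support, patching over the complement using the intertwining data encoded in the remaining coefficients $w_k$, each of which satisfies $xw_k=w_k\alpha_k(x)$ and hence intertwines $\mathrm{id}$ and $\alpha_k$ on $L(G_1)$. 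This unitarization, rather than any of the algebraic identities above, is where the argument genuinely requires care.
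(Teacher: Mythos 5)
Your algebra is correct exactly up to the point you yourself flag, and the flag is accurate: as written, the proposal does not prove the proposition, because nothing in it rules out $v=E(w)=0$. This is a genuine gap, not a technicality, and neither of your remedies closes it. Central ergodicity only gives $|v|\in Z(L(G_1))^{\alpha}=\C 1$, i.e.\ $|v|$ is a \emph{scalar}, possibly zero; your phrase ``nonzero scalar times a projection'' assumes exactly what must be proved. The ``patching'' idea is not an argument either: a priori an implementing unitary could sit entirely in a single spectral subspace of the modular flow, $w=I_\ell^\alpha(a)\lambda_\ell^u(k_0)$ with $k_0\ne 1$ and $a\in L(G_1)$ unitary (formally this only requires $\mathrm{Ad}(a)\circ\alpha_{k_0}=\id$ on $L(G_1)$ together with a compatibility between $a$ and the $2$-cocycle), in which case $E(w)=0$ and your candidate $v$ vanishes identically; the proof must show that this configuration is \emph{impossible}, and that is precisely where modular theory has to enter.

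There are two ways to close the gap. (1) Completing your Fourier argument: the restriction $\tau:=\psi|_{I_\ell^\alpha(L(G_1))}$ is a faithful normal semifinite \emph{trace} (Theorem \ref{restofplweight}, $G_1$ being unimodular), and since $\lambda_\ell^u(k)$ is a $\sigma^{\psi}$-eigenvector, $\sigma_s^{\psi}(\lambda_\ell^u(k))=k^{is}\lambda_\ell^u(k)$, the unitary $\lambda_\ell^u(k)$ scales $\psi$, so that $\tau\circ\alpha_k=k^{\pm 1}\tau$; only the fact that the factor is $\ne 1$ for $k\ne 1$ matters. A trace-scaling automorphism admits no nonzero intertwiner with the identity: if $xa=a\alpha_k(x)$ for all $x\in L(G_1)$, then $a^*a,\,aa^*\in Z(L(G_1))$, the polar part $v_k$ of $a$ satisfies $v_k^*xv_k=\alpha_k(x)\,v_k^*v_k$, and applying $\tau$ and traciality gives first $v_kv_k^*=\alpha_k^{-1}(v_k^*v_k)$ and then $(1-k^{\pm1})\tau(\,\cdot\,v_kv_k^*)=0$, whence $v_k=0$ by faithfulness and semifiniteness. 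Applied to your coefficients this kills every $w_k$ with $k\ne 1$, so $w=I_\ell^\alpha(w_e)$ and $w_e$ is \emph{automatically} a unitary in $Z(L(G_1))$; your remaining computation then finishes the proof with no case distinction and no factoriality assumption. (2) The route that makes the paper's ``essentially identical to Proposition \ref{sfnessbycocycleact}'' literally true: by Connes' characterization of the T-invariant \cite{Connes1973}, $t\in\mathrm{T}(L(G))$ yields a faithful normal semifinite weight $\phi$ with $\sigma_t^{\phi}=\id$; putting $u_s=(D\phi:D\varphi_G)_s$, one has $\sigma_t^{\varphi_G}=\mathrm{Ad}(u_t^*)$, and the cocycle identity $u_{s+r}=u_s\sigma_s^{\varphi_G}(u_r)$ forces $\sigma_s^{\varphi_G}(u_t)=u_s^*u_t\sigma_t^{\varphi_G}(u_s)=u_t$ for all $s$. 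Thus $w:=u_t^*$ implements $\sigma_t^{\varphi_G}$ \emph{and} lies in the centralizer $L(G)^{\varphi_G}\cong L(G_1)$ from the outset --- which is exactly the step that came for free (from commutativity of the one-parameter group) in the semifinite proposition and breaks for a single unitary --- after which the paper's computation runs verbatim and gives $w\in Z(L(G_1))$ with $\alpha_k(w)=k^{-it}w$.
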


There is an example which can be applied to the above propositions. 

\begin{proposition}\label{suffcondiforIII0}
    Let $G$ be a locally compact group such that $\Delta_G(G)$ is relatively discrete. Assume $L(G_1)$ is not a factor. We identify $L(G) \cong L(G_1)\rtimes_{\alpha,u}^\ell \Delta_G(G)$ with an inverse preserving section $\sigma\colon \Delta_G(G) \to G$. If $\alpha_k|_{Z(L(G_1))}$ is ergodic for each $k \in \Delta_G(G) \backslash \{1\}$, then $L(G)$ is a $\rm{III}_0$-factor.
\end{proposition}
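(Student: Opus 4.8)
The plan is to assemble the three tools established above: the factoriality criterion (Theorem \ref{tdlcfactority}), the $S$-invariant formula in the relatively discrete case (Proposition \ref{Sinvformula}), and the semifiniteness criterion (Proposition \ref{sfnessbycocycleact}). First I would record the standing reductions. Since $\Delta_G(G)$ is relatively discrete it is singly generated, say $\Delta_G(G)=\{k_0^n:n\in\Z\}$ with $k_0>1$; here $\Delta_G(G)\ne\{1\}$, for otherwise the action would be trivial and could not be ergodic on the nontrivial algebra $A:=Z(L(G_1))$ (and $A\ne\C 1$ precisely because $L(G_1)$ is not a factor). On the center the $2$-cocycle $u$ acts by inner automorphisms and hence trivially, so $\alpha|_A$ is a genuine $\Z$-action and $\alpha_{k_0^n}|_A=(\alpha_{k_0}|_A)^n$ for all $n$. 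In particular $\alpha_{k_0}|_A$ is ergodic, so $A^{\alpha}=A^{\alpha_{k_0}}=\C 1$, i.e. $(\alpha,u)$ is centrally ergodic, and Theorem \ref{tdlcfactority} gives that $L(G)$ is a factor.

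Next I would pin down $\text{S}(L(G))$. By the relatively discrete case of Proposition \ref{Sinvformula},
\[
\text{S}(L(G))\backslash\{0\}=\{k\in\Delta_G(G):\alpha_k|_{A}=\text{id}\}\backslash\{0\}.
\]
For each $n\ne 0$ the automorphism $\alpha_{k_0^n}|_A$ is ergodic while $A\ne\C 1$, so it cannot be the identity (its fixed-point algebra would be all of $A$, not $\C 1$). Hence the right-hand set is exactly $\{1\}$ and $\text{S}(L(G))\backslash\{0\}=\{1\}$. Since $\text{S}(L(G))\cap(0,\infty)$ is a closed subgroup of $\R_+$, this already excludes the types $\text{III}_\lambda$ $(0<\lambda<1)$ and $\text{III}_1$, so $L(G)$ is either semifinite or of type $\text{III}_0$.

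The decisive step, which I expect to be the main obstacle, is ruling out semifiniteness; this is exactly where ergodicity of a higher power of $\alpha_{k_0}$ (not merely of $\alpha_{k_0}$ itself) is essential. Suppose $L(G)$ were semifinite. By Proposition \ref{sfnessbycocycleact} there is a one-parameter unitary group $(v(t))_{t\in\R}\subseteq A$ with $\alpha_k(v(t))=k^{-it}v(t)$ for all $k\in\Delta_G(G)$. Choose $t_0$ with $k_0^{-it_0}=-1$ (possible since $t\mapsto k_0^{-it}=e^{-it\log k_0}$ surjects onto the unit circle as $\log k_0\ne 0$) and set $w:=v(t_0)\in A$, a unitary with $\alpha_{k_0}(w)=-w$. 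Then $\alpha_{k_0^2}(w)=\alpha_{k_0}(-w)=w$, so $w$ is fixed by $\alpha_{k_0^2}|_A$; ergodicity of $\alpha_{k_0^2}|_A$ forces $w\in\C 1$, whence $\alpha_{k_0}(w)=w$, contradicting $\alpha_{k_0}(w)=-w$ with $w\ne 0$. Therefore $L(G)$ is not semifinite, hence is of type $\text{III}$, and combined with $\text{S}(L(G))\backslash\{0\}=\{1\}$ we conclude $\text{S}(L(G))=\{0,1\}$, i.e. $L(G)$ is a $\text{III}_0$-factor.

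In writing this out I would only need to verify two routine points: that $\text{S}(L(G))=\{0,1\}$ does characterize type $\text{III}_0$ among factors (Connes' classification via the $S$-invariant), and that extracting the single eigen-unitary $w=v(t_0)$ from the one-parameter group and using $\alpha_{k_0^2}=(\alpha_{k_0})^2$ on $A$ are legitimate. The conceptual content is concentrated in the third paragraph: ergodicity of a nontrivial power of $\alpha_{k_0}$ on the nontrivial center produces a root-of-unity obstruction to the existence of the eigen-unitary that semifiniteness would require, which is precisely what forces type $\text{III}_0$ rather than the semifinite alternative left open by the $S$-invariant computation.
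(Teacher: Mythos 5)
Your proof is correct and follows essentially the same route as the paper's: factoriality from central ergodicity via Theorem \ref{tdlcfactority}, the computation $\mathrm{S}(L(G))\backslash\{0\}=\{1\}$ via the relatively discrete case of Proposition \ref{Sinvformula}, and non-semifiniteness via Proposition \ref{sfnessbycocycleact} by producing a central unitary eigenvector with eigenvalue $-1$ and deriving a contradiction from ergodicity. The only cosmetic difference is in the last step: the paper applies ergodicity to $\alpha_k$ with $k=k_0^2$ (taking $v(T)$, $T=-2\pi/\log k$, which is $\alpha_k$-fixed and then acquires eigenvalue $-1$ under $\alpha_{k^{1/2}}$), whereas you first arrange $\alpha_{k_0}(w)=-w$ and then apply ergodicity to $\alpha_{k_0^2}$ --- the same root-of-unity obstruction read in the opposite order.
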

\begin{proof}
    Since $\alpha_k|_{Z(L(G_1))}$ is ergodic for each $k\in \Delta_G(G)$, $(\alpha,u):\Delta_G(G) \curvearrowright L(G_1)$ is centrally ergodic. Thus, by Theorem \ref{tdlcfactority}, $L(G)$ is a factor. \par
    Moreover, since $L(G_1)$ is not a factor, for each $k\in \Delta_G(G) \backslash\{1\}$, there exists $x\in Z(L(G_1))$ such that $\alpha_k(x) \ne x$. Hence, by Proposition \ref{Sinvformula}, it follows that $$S(L(G))\backslash\{0\}=\{k\in\Delta_G(G):\alpha_k|_{Z(L(G_1))}=\text{id}\} \backslash\{0\}=\{1\}.$$
    Finally, we shall show that $L(G)$ is not semifinite. Assume that this is not the case. Then, there exists a one parameter unitary group $(v(t))_{t\in \R} \subseteq Z(L(G_1))$ which satisfies $\alpha_k(v(t))=k^{-it}v(t)\,\, (k \in \Delta_G(G))$ by Proposition \ref{sfnessbycocycleact}. Fix $k\in \Delta_G(G) \backslash\{1\}$ such that $k^{1/2}\in \Delta_G(G)$. Set $T=-\frac{2\pi}{\text{log}k}$ . We have $\alpha_k(v(T))=k^{-iT}v(T)=e^{2\pi i}v(T)=v(T)$. By the ergodicity of $\alpha_k|_{Z(L(G_1))}$, we see that $v(T)=c$ for some constant $c \in \mathbb{T}$. However, it follows that $$c=\alpha_{k^{1/2}}(c)=\alpha_{k^{1/2}}(v(T))=(k^{1/2})^{-iT}v(T)=e^{\pi i}v(T)=-c,$$ 
    which is a contradiction to $c\in \mathbb{T}$. Therefore, we conclude that $L(G)$ is not semifinite and hence is a $\rm{III}_0$-factor.
\end{proof}

\begin{example}
    We use the same notation as in Example \ref{afineex}. That is, $G=\Q_p \rtimes \Q_p^*,G_1=\Q_p \rtimes \Z_p^*$. We know that $Z(L(G_1)) \cong L^{\infty}(\Q_p)^{\dot{\gamma}}$ where $\dot{\gamma}\colon \Z_p^* \acts L^{\infty}(\Q_p)$ is obtained by $(\dot{\gamma}_sf)(x)=f(sx)\,\,(f\in L^{\infty}(\Q_p), s\in \Z_p^*, x\in \Q
    _p).$ For $t\in \R$, let $f_t(x)=|x|_p^{it}\,\,(x\in \Q_p)$. Then, it follows that $f_t \in L^{\infty}(\Q_p)^{\dot{\gamma}}$ since $|s|_p=1\,\,(s\in \Z_P^*)$ holds. Moreover, since $(f_t\overline{f_t})(x)=|x|_p^{it}|x|_p^{-it}=1$ and $(f_tf_s)(x)=|x|_p^{it}|x|_p^{is}=|x|_p^{i(t+s)}=f_{t+s}(x)$, we conclude that $(f_t)_{t\in \R}\subseteq Z(L(G_1))$ is a one parameter unitary group. Recall that $(\alpha_{p^n}(f))(x)=f(p^nx)$ for all $f\in L^{\infty}(\Q_p)$. We have $(\alpha_{p^n}(f_t))(x)=|p^nx|_p^{it}=|p^n|_p^{it}|x|_p^{it}=(p^{-n})^{it}|x|_p^{it}=(p^n)^{-it}f_t(x)$ for all $n\in \Z, t\in \R$. Therefore, we conclude that $L(G)$ is semifinite by Proposition \ref{sfnessbycocycleact}.
\end{example}

\begin{example}
    We still use the same notation as in the above example. Let $f=1_{\Z_p} \in L^{\infty}(\Q_p)$. Since $s\Z_p=\Z_p$ for all $s\in \Z_p^*$, we see that $f\in L^{\infty}(\Q_p)^{\dot{\gamma}} \cong Z(L(G_1))$. Then, for each nonzero $n\in\Z$, we have $\alpha_{p^n}(f)=1_{p^{-n}\Z_p}\ne f$. By Proposition \ref{Sinvformula}, we conclude that $S(L(G))\backslash\{0\}=\{1\}$ because $\Delta_G(G)=\{p^n:n\in \Z\}$ is relatively discrete.
\end{example}

\begin{example}\label{Sutherlandex}
    We also compute the case as in \cite[Lemma 5.1]{Sutherland1978}. Suppose $H$ is a unimodular group such that $L(H)$ is a factor. Let $a_\lambda$ be a continuous automorphism of $H$ which satisfies $\mu_H\circ a_\lambda^{-1}=\lambda \mu_H$ for some $\lambda\in(0,1)$. Also, let $K=\{0,1\}^{\Z}$ be a compact group. We define $s\colon K \to K$ by $(s(x))_i=x_{i-1}\,\, (x\in K, i\in \Z)$. Let $\hat{K}$ be the dual group of $K$, and $\hat{s}$ the automorphism of $\hat{K}$ dual to $s$ on $K$. That is, $\nai{x}{\hat{s}(p)}=\nai{s(x)}{p}$ for all $x\in K, p\in \hat{K}$. Define the automorphism $b_\lambda$ of $H \times \hat{K}$ by $b_\lambda(h,p)=(a_\lambda(h),\hat{s}(p))$. For each $n\in \Z$, we define an automorphism of $H\times \hat{K}$ by $\alpha_n(h,p)=b_\lambda^n(h,p)$. Then, $\alpha\colon \Z \acts H\times \hat{K}$ becomes an action of the discrete group $\Z$. Consider the crossed product group $G=(H\times \hat{K})\rtimes_{\alpha}\Z$. It can be shown that $\Delta_G(h,p,n)=\lambda^n \,\,(h\in H, p\in \hat{K}, n\in \Z)$ and hence $G$ is almost unimodular. Therefore, $G_1=\{(h,p,0): h\in H, p\in \hat{K}\} \cong H\times \hat{K}$ is open in $G$. Since there is a homomorphism section $\sigma\colon \Delta_G(G) \to G$ defined by $\sigma(\lambda^n)=(e_H,e_{\hat{K}},n)$ and $\sigma(\lambda^n)(h,p,0)\sigma(\lambda^n)^{-1}=(e_H,e_{\hat{K}},n)(h,p,0)(e_H,e_{\hat{K}},-n)=(a_\lambda^n(h),\hat{s}^n(p),0)$, it follows that the twisted crossed product decomposition $L(G) \cong L(G_1) \rtimes_{\alpha,u}^\ell \Delta_G(G)$ via Proposition \ref{cocycleresol} is truly the crossed product decomposition $L(G)=L((H\times\hat{K})\rtimes_{\alpha}\Z) \cong L(H\times\hat{K})\rtimes_{\overline{\alpha}}\Z$.
    Here, $\overline{\alpha}$ is the action of $\Z$ on $L(H\times\hat{K})$ induced by $\alpha\colon \Z \acts H\times \hat{K}$. Since $L(H\times \hat{K}) \cong L(H) \vNtensor L(\hat{K}) \cong L(H) \vNtensor L^{\infty}(K)$, we see that $Z(L(G_1)) \cong Z(L(H)) \vNtensor Z(L^{\infty}(K))\cong L^{\infty}(K)$. It can be shown that $(\overline{\alpha}_n(f))(x)=f(s^n(x))$ for $f\in L^{\infty}(K), x\in K$ with this identification. Then, it is known that this action is ergodic for each $n\in \Z$, and hence $L(G)$ is a $\rm{III}_0$-factor by Proposition \ref{suffcondiforIII0}. 
\end{example}

\begin{remark}
    Let $G$ be a second countable almost unimodular group such that $L(G_1)$ is a factor. If $\Delta_G(G)$ is relatively discrete, then we have $\Delta_G(G)=\{\lambda^n : n \in \Z \}$ for some $\lambda \in (0,1)$. Fix $g \in G$ such that $\Delta_G(g) = \lambda$. Then, we can construct a homomorphism section $\sigma \colon \Delta_G(G) \to G$ defined by $\lambda^n \mapsto g^n\,(n \in \Z)$. Let $a_\lambda$ be a continuous automorphism on $G_1$ defined by $a_\lambda = \text{Ad}g$. Then, the argument in Example \ref{Sutherlandex} applies to $G_1$ and $a_\lambda$. Moreover, we see that $G_1 \rtimes \Z \cong G_1 \rtimes \Delta_G(G)$ with the homomorphism section $\sigma$.
\end{remark}

This section ends with an extended example based on Example \ref{Sutherlandex}, demonstrating the case that $\Delta_G(G)$ is dense in $\R_+$.

\begin{example}\label{Sutherlandextension}
    Suppose $H$ is a second countable almost unimodular group which satisfies that $L(H_1)$ is a factor and $\overline{\Delta_H(H)}=\R_{\ge0}$. (Such group exists; e.g. Take second countable totally disconnected groups $G,\tilde{G}$ such that $L(G_1)$ and $L(\tilde{G}_1)$ are factors and $\Delta_G(G)=\{\lambda^{n}: n\in\Z\}, \Delta_{\tilde{G}}(\tilde{G})=\{\tilde{\lambda}^n : n \in \Z\}$ holds for some $\lambda , \tilde{\lambda} \in (0,1)$ which satisfies $\frac{\log \lambda}{\log \tilde{\lambda}}\notin \Q$. Then, $H=G \times \tilde{G}$ does the work.) Denote $\Gamma=\Delta_H(H)$ and regard it as a discrete group. Note that $\Gamma$ is countable because of \cite[Proposition2.3]{guinto2025unimodulargroups}. Also, let $K=\{0,1\}^{\Gamma}$ be a compact group. There is an action $s:\Gamma \curvearrowright K$ defined by $$(s_\gamma(x))_\delta=x_{\gamma^{-1}\delta}\,\,(\gamma,\delta \in \Gamma, x\in K).$$
    Moreover, taking a dual map, there is an action $\hat{s}\colon \Gamma \curvearrowright \hat{K}$ defined by $$\nai{x}{\hat{s}_{\gamma}(p)}=\nai{s_{\gamma}(x)}{p}\,\,(x\in K, \gamma \in \Gamma, p \in \hat{K}).$$
    Also, there is a cocycle action $(\alpha^{\Gamma}, c^{\Gamma})\colon \Gamma \curvearrowright H_1$ such that $H \cong H_1 \rtimes_{\alpha^{\Gamma}, c^{\Gamma}} \Gamma$ associated with an inverse preserving section $\sigma \colon \Gamma \to H$ as in Proposition \ref{cocycleresol}. In this setting, it follows that $L(H)\cong L(H_1) \rtimes_{\alpha,u}\Gamma$ which satisfies $\alpha_\gamma(\lambda_\ell^{H_1}(h))=\lambda_\ell^{H_1}(\alpha_\gamma^\Gamma(h)) \,(\gamma\in \Gamma, h\in H_1)$ and $u(\gamma,\delta)=\lambda_\ell^{H_1}(c^{\Gamma}(\gamma,\delta))\, (\gamma,\delta\in \Gamma)$. \par
    Now, we can construct a new cocycle action $(\beta,v)\colon \Gamma \curvearrowright H_1 \times \hat{K}$ defined by 
    \begin{align*}
        \begin{cases}
            \beta_\gamma^\Gamma(h,p)=(\alpha_\gamma^{\Gamma}(h),\hat{s}_\gamma(p)), \\
            v^\Gamma(\gamma, \delta)=(c^{\Gamma}(\gamma,\delta), e_{\hat{K}}) \,\,(\gamma, \delta \in \Gamma, h\in H_1, p \in \hat{K}).
        \end{cases}
    \end{align*}
    Then, we denote $G=(H_1 \times \hat{K})\rtimes_{\beta^\Ga,v^\Gamma} \Gamma$ and consider $L(G)$. First, we calculate $\Delta_G$. For each $g \in H_1, q\in \hat{K}, \delta \in \Gamma$ and non-negative measurable function $f$ on $G$, we calculate 
    \begin{align*}
        \quad & \int_\Gamma \int_{\hat{K}} \int_{H_1} f((g,q,\delta)(h,p,\gamma))\cdot\gamma\,d\mu_{H_1}(h)d\mu_{\hat{K}}(p)d\mu_{\Gamma}(\gamma)
        \\ & = \int_\Gamma \int_{\hat{K}} \int_{H_1} f((g\alpha_\delta^\Gamma(h)c^\Gamma(\delta,\gamma),q\hat{s}_\delta(p),\delta\gamma))\cdot\gamma \,d\mu_{H_1}(h)d\mu_{\hat{K}}(p)d\mu_{\Gamma}(\gamma)
        \\ & = \int_\Gamma \int_{\hat{K}} \int_{H_1} f((\alpha_\delta^\Gamma (\alpha_{\delta^{-1}}^\Gamma(g)h \alpha_{\delta^{-1}}^\Gamma(c^\Gamma(\delta,\gamma))), \hat{s}_\delta(\hat{s}_{\delta^{-1}}(q)p),\gamma))\cdot\delta^{-1}\gamma\,d\mu_{H_1}(h)d\mu_{\hat{K}}(p)d\mu_{\Gamma}(\gamma) 
        \\ & = \int_\Gamma \int_{\hat{K}} \int_{H_1} f((\alpha_\delta^\Gamma (h ), \hat{s}_\delta(p),\gamma))\cdot\delta^{-1}\gamma\,d\mu_{H_1}(h)d\mu_{\hat{K}}(p)d\mu_{\Gamma}(\gamma) 
        \\ & = \int_\Gamma \int_{\hat{K}} \int_{H_1} f((h,p,\gamma)) \gamma\,d\mu_{H_1}(h)d\mu_{\hat{K}}(p)d\mu_{\Gamma}(\gamma).
    \end{align*}
    Here, we use the fact that $d\mu_{H_1}(\alpha_{\delta^{-1}}^\Gamma(h))=\delta \cdot d\mu_{H_1}(h)$ and $\hat{K}$ and $\Gamma$ are discrete. Therefore, we conclude that $\gamma\cdot d\mu_{H_1}(h)d\mu_{\hat{K}}(p)d\mu_{\Gamma}(\gamma)$ is a left Haar measure on $G$. With the same notation as above, we have
    \begin{align*}
        & \int_\Gamma \int_{\hat{K}} \int_{H_1} f((h,p,\gamma)(g,q,\delta))\cdot\gamma\,d\mu_{H_1}(h)d\mu_{\hat{K}}(p)d\mu_{\Gamma}(\gamma)
        \\ & = \int_\Gamma \int_{\hat{K}} \int_{H_1} f((h\alpha_{\gamma}^\Gamma(g)c^{\Gamma}(\gamma,\delta),p\hat{s}_\gamma(q),\gamma\delta))\cdot\gamma\,d\mu_{H_1}(h)d\mu_{\hat{K}}(p)d\mu_{\Gamma}(\gamma)
        \\ & = \int_\Gamma \int_{\hat{K}} \int_{H_1} f((h,p,\gamma))\cdot\delta^{-1}\gamma\,d\mu_{H_1}(h)d\mu_{\hat{K}}(p)d\mu_{\Gamma}(\gamma)
        \\ & = \delta^{-1} \int_\Gamma \int_{\hat{K}} \int_{H_1} f((h,p,\gamma))\cdot\gamma\,d\mu_{H_1}(h)d\mu_{\hat{K}}(p)d\mu_{\Gamma}(\gamma).
    \end{align*}
    Thus, we see that $\Delta_G(g,q,\delta)=\delta$ for each $(g,q,\delta)\in G$. It follows that $\Delta_G(G)=\Gamma$ and $G_1=(H_1\times \hat{K})\rtimes_{\beta^\Gamma,v^\Gamma}\{1\} \cong H_1\times \hat{K}$. Hence, $G_1$ is open in $G$. We can construct an inverse preserving section $\sigma^G \colon \Gamma \to G$ defined by $\sigma^G(\gamma)=(e_{H_1},e_{\hat{K}},\gamma)\,(\gamma\in \Gamma)$. With this section, we see that $\sigma^G(\gamma)(h,p,1)\sigma^G(\gamma)^{-1}=(\alpha_\gamma^{\Gamma}(h),\hat{s}_\gamma(p),1)$ for each $h\in H_1, p \in \hat{K}, \gamma\in \Gamma$. We can decompose $L(G)$ as $L(G_1)\rtimes_{\beta,v}\Gamma$ by Proposition \ref{cocycleresol}. Note that $(\beta,v) \colon \Gamma \curvearrowright L(G_1)$ satisfies $\beta_{\gamma}(\lambda_\ell^{G_1}(h,p,1))=\lambda_\ell^{G_1}(\alpha_\gamma^\Gamma(h),\hat{s}_\gamma(p),1)$.
    Using the isomorphism $L(G_1) \cong L(H_1 \times \hat{K}) \cong L(H_1) \vNtensor L(\hat{K}) \cong L(H_1) \vNtensor L^{\infty}(K)$, we have $Z(L(G_1)) \cong Z(L(H_1))\vNtensor Z(L^\infty(K))=L^\infty(K)$ by the assumption that $L(H_1)$ is a factor. Moreover, if we set $\Phi\colon L(G_1) \to L(H_1) \vNtensor L^{\infty}(K)$ be the above isomorphism, then we have $ \Phi\circ \beta_\gamma \circ\Phi^{-1}(f)=\overline{s}_\gamma(f)$ for $f \in L^{\infty}(K)$ where $(\overline{s}_\gamma(f))(x)=f(s_\gamma(x))\,(x\in K)$. Therefore, we analyze $\overline{s}_\gamma$ instead of $\beta_\gamma|_{Z(L(G_1))}$. \par
    First, we recall that the Bernoulli shift $s \colon \Gamma \curvearrowright K$ is mixing (see, e.g. \cite[Section 2.3.1]{kerr2016}). Since $\Gamma$ is a subgroup of $\R_+$, it follows that each $\gamma \in \Gamma$ has an infinite order, and hence $s_\gamma$ is ergodic. Therefore, we see that $\overline{s}_\gamma$ is ergodic for each $\gamma \in \Gamma$. Thus, it can be shown that $L(G)$ is a factor and not semifinite as in the proof of Proposition \ref{suffcondiforIII0}. We shall determine the type of $L(G)$.  By Proposition \ref{Sinvformula}, we have 
    \begin{align*}
         \text{S}(L(G))\backslash\{0\} &=\bigcap_{0\ne e\in Z(L(G_1))^P}\overline{\{\gamma\in \Gamma:\beta_\gamma(e)e\ne0\}}\backslash\{0\}
         \\&= \bigcap_{0 \ne 1_E \in L^{\infty}(K)}\overline{\{\gamma\in \Gamma:\overline{s}_\gamma(1_E)1_E\ne0\}}\backslash\{0\}
         \\&= \bigcap_{E\in \mathcal{B}(K),\mu_K(E)>0}\overline{\{\gamma\in \Gamma:\mu_K (s_\gamma(E)\cap E) >0\}}\backslash\{0\},
    \end{align*}
    where $\mathcal{B}(K)$ is the set of Borel sets of $K$. We may assume $\mu_K(K)=1$ since $K$ is compact. Denote $A_\gamma^t=\{x\in K :x_\gamma=t \}$ where $\gamma\in \Gamma, t\in\{0,1\}$. Set $$\mathscr{C}:=\{\bigcap_{\gamma\in F}A_\gamma^{t_\gamma}: F \subseteq\Gamma, 0 \le\#F < \infty,t_\gamma\in\{0,1\}\}.$$ Note that $\mathscr{C}$ generates the topology of $K$ and hence generates $\mathcal{B}(K)$. Fix arbitrary $r\in \R_+$ and $E\in \mathcal{B}(K)$ which satisfies $\mu_K(E)>0$. We shall show that $r \in \overline{\{\gamma\in \Gamma:\mu_K (s_\gamma(E)\cap E) >0\}}$. \par
    Since $\mu_K(E) > 0$, there exists a compact set $C \subseteq E$ which satisfies $\mu_K(C)>0$ by the inner regularity of $\mu_K$. Let $\varepsilon=\mu_K(C)^2/4$. By the outer regularity of $\mu_K$, there exists an open set $U \supseteq C$ which satisfies $\mu_K(U \backslash C) \le \varepsilon$. Since $\mathscr{C}$ is a basis for $K$, we obtain $U=\bigcup_{n=1}^{\infty}U_n$ for some $U_1, U_2, \ldots \in\mathscr{C}$. Then, the compactness of $C$ implies that there is a finite subset $I \subseteq \N$ such that $C \subseteq \bigcup_{n\in I}U_n\subseteq U$. We denote $V=\bigcup_{n\in I}U_n$ and $U_n=\bigcap_{\gamma\in F_n}A_{\gamma}^{t_{\gamma}^n}$ for each $n \in \N$. Set $F:=\bigcup_{n\in I}F_n$ and $\gamma F=\{\gamma\delta:\delta\in F\}\,(\gamma\in\Gamma)$. Then, we see that $F$ is a finite subset. Therefore, it follows that $$\#\{\gamma\in\Gamma:\gamma F \cap F \ne \emptyset\}=\#\{\frac{\gamma}{\delta}:\gamma,\delta\in F\}<\infty.$$
    Now, we shall show that $\mu_K(s_\gamma(V)\cap V)=\mu_K(V)^2$ if $\gamma F\cap F=\emptyset$. Recall that $\mu_K(A)=2^{-\#F_A}$ for any $A=\bigcap_{\gamma\in F_A}A_\gamma^{t_\gamma} \in \mathscr{C}$. Therefore, for $A,B \in\mathscr{C}$, if $F_A \cap F_B=\emptyset$, then we have $\mu_K(A\cap B)=\mu_K(\bigcap_{\gamma\in F_A \cup F_B}A_\gamma^{t_\gamma})=2^{-(\#F_A+\#F_B)}=\mu_K(A)\mu_K(B)$. Moreover, it can be shown that $A^c=\bigsqcup_{i=1}^{2^{\#F_A}-1}\tilde{A}_i$ for some $\tilde{A}_1,\ldots, \tilde{A}_{2^{\#F_A}-1}\in\mathscr{C}$ such that $F_{\tilde{A}_i}=F_A\,(i=1,\ldots,2^{\#F_A}-1)$. Thus, if we define $U_n^{0}=U_n,U_n^1=U_n^c$, then we have
    $$V=\bigsqcup_{\ell\in \{0,1\}^I\backslash\{(1,1,\cdots,1)\}}\bigcap_{n\in I}U_n^{\ell_n}=\bigsqcup_{n=1}^{N}V_n$$
    for some $N\in\N$ and $V_1,\ldots,V_N \in \mathscr{C}$ because $\mathscr{C}$ is closed under finite intersection. Take $\gamma \in \Gamma$ such that $\gamma F \cap F= \emptyset$. Note that $s_\gamma(U_n)=\bigcap_{\delta\in \gamma F_n}A_\delta^{t_{\gamma^{-1}\delta}^n}$, we have
    \begin{align*}
        \mu_K(s_\gamma(V)\cap V)&=\mu_K(s_\gamma(V)\cap(\bigsqcup_{n=1}^NV_n))=\mu_K(\bigsqcup_{n=1}^N(s_\gamma(V)\cap V_n))
        \\&= \sum_{n=1}^{N}\mu_K(s_\gamma(V)\cap V_n) =\sum_{n=1}^N\mu_K((\bigsqcup_{m=1}^Ns_\gamma(V_m))\cap V_n)
        \\&=\sum_{n=1}^N\sum_{m=1}^N\mu_K(s_\gamma(V_m)\cap V_n)=\sum_{n=1}^N\sum_{m=1}^N \mu_K(s_\gamma(V_m))\mu_K(V_n)
        \\&=\sum_{n=1}^N\sum_{m=1}^N \mu_K(V_m)\mu_K(V_n) =\mu_K(V)^2.
    \end{align*}
    Here, we use the fact that $s_\gamma$ is measure preserving. \\
    Next, we shall show that $\mu_K(s_\gamma(C)\cap C)>0$ if $\gamma F \cap F = \emptyset$. Take such $\gamma \in \Gamma$. It is clear that $$\mu_K(s_\gamma(V)\cap C)+\mu_K((s_\gamma(V)\cap V)\backslash C)=\mu_K(s_\gamma(V)\cap V)=\mu_K(V)^2.$$
    Moreover, $(s_\gamma(V) \cap V) \backslash C \subseteq V \backslash C$ implies that $\mu_K((s_\gamma(V) \cap V) \backslash C) \le \mu_K(V \backslash C) \le \varepsilon$. Thus, we have $$\mu_K(s_\gamma(V)\cap C) \ge \mu_K(V)^2-\varepsilon.$$
    Similarly, we obtain $$\mu_K(s_\gamma(C)\cap V) \ge \mu_K(V)^2-\varepsilon.$$ Note that $(s_\gamma(V)\cap C)\cup(s_\gamma(C)\cap V) \subseteq s_\gamma(V) \cap V$, we have 
    $$\mu_K(s_\gamma(V)\cap C)+\mu_K(s_\gamma(C)\cap V)-\mu_K(s_\gamma(V)\cap C\cap s_\gamma(C)\cap V) \le\mu_K(V)^2.$$
    Therefore, we conclude that 
    \begin{align*}
        \mu_K(s_\gamma(C)\cap C) & \ge \mu_K(s_\gamma(V)\cap C)+\mu_K(s_\gamma(C)\cap V) - \mu_K(V)^2
        \\& \ge\mu_K(V)^2- 2\varepsilon \\&=\mu_K(V)^2-\mu_K(C)^2 /2
        \\& \ge \mu_K(C)^2/2 >0.
    \end{align*}
    By the preceding discussion, we have $\mu_K(s_\gamma(C)\cap C)>0$ if $\gamma F \cap F= \emptyset$. In this case, it follows that $\mu_K(s_\gamma(E)\cap E) \ge \mu_K(s_\gamma(C)\cap C)>0.$
    Recall that $\Gamma$ is dense in $\R_{\ge0}$, there is a sequence $(\gamma_n)_{n\in \N} \subseteq \Gamma$ which satisfies $|r-\gamma_n| \to0$ as $n\to \infty$. Then, since $\#((\gamma_n)_{n\in \N}\cap \{\gamma\in\Gamma:\gamma F \cap F \ne \emptyset\}) \le \#\{\gamma\in\Gamma:\gamma F \cap F \ne \emptyset\} < \infty$, we can construct a sequence $(\tilde{\gamma}_{n})_{n\in \N}$ which satisfies $|r-\tilde{\gamma}_n|\to0$ and $\mu_K(s_{\tilde{\gamma}_n}(E)\cap E)>0$ for all $n \in \N$ by removing $\gamma_n$ such that $\gamma_n F \cap F \ne \emptyset$. Thus, we conclude that $r \in \overline{\{\gamma\in \Gamma:\mu_K (s_\gamma(E)\cap E) >0\}}$. 
    \par
    Since $r\in \R_+$ and $E\in \mathcal{B}(K)$ are arbitrary, we obtain
    $$\text{S}(L(G))\backslash\{0\}=\bigcap_{E\in \mathcal{B}(K),\mu_K(E)>0}\overline{\{\gamma\in \Gamma:\mu_K (s_\gamma(E)\cap E) >0\}}\backslash\{0\}=\R_+$$
    and hence $L(G)$ is a type $\rm{III}_1$-factor.
\end{example}

\begin{remark}
    In Example \ref{Sutherlandextension}, since $\overline{s}_\gamma$ is ergodic for each $\gamma \in\Gamma \backslash \{1\}$, it follows that 
    \begin{align*}
        \overline{\bigcap_{0\ne e\in Z(L(G_1))^P}\{\gamma\in \Gamma:\beta_\gamma(e)e\ne0\}}\backslash\{0\}&=\overline{\{\gamma\in\Gamma:\beta_\gamma|_{Z(L(G_1))}=\text{id}\}}\backslash\{0\}\\&=\overline{\{\gamma\in\Gamma : \overline{s}_\gamma=id\}}\backslash\{0\}=\{1\}
    \end{align*}
    while $$\bigcap_{0\ne e\in Z(L(G_1))^P} \overline{\{\gamma\in \Gamma:\beta_\gamma(e)e\ne0\}}\backslash\{0\}=\R_+.$$
\end{remark}

\begin{corollary}\label{CorofSutex}
    Given a non-unimodular almost unimodular group $H$ such that $L(H_1)$ is a factor, we can always construct a new almost unimodular group $G=(H_1 \times \hat{K}) \rtimes_{\alpha, c} \Delta_H(H)$ as in Example \ref{Sutherlandex} and Example \ref{Sutherlandextension}. We conclude that $L(G)$ is a factor of type
    \begin{align*}
         \begin{cases}
        \rm{III}_0 \quad \text{when}\; \Delta_H(H) \; \text{is relatively discrete,} \\
        \rm{III}_1 \quad \text{when}\; \Delta_H(H) \; \text{is dense in} \; \R_{\ge 0}.
    \end{cases}
    \end{align*}    
\end{corollary}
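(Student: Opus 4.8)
The plan is to split into the two cases according to the dichotomy for multiplicative subgroups of $\R_+$ recorded in the remarks above: since $H$ is non-unimodular and almost unimodular, $\Delta_H(H)$ is a nontrivial countable subgroup of $\R_+$, hence either relatively discrete (equivalently singly generated) or dense in $\R_{\ge 0}$. In each case the assertion is obtained by realizing $G$ as one of the two worked constructions and quoting the type computation already carried out there; the only genuine work is to verify that the hypotheses of those examples are met when the input is the unimodular group $H_1$ together with the data coming from $H$.

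For the relatively discrete case, write $\Delta_H(H) = \{\lambda^n : n \in \Z\}$ with $\lambda \in (0,1)$ and fix $g \in H$ with $\Delta_H(g) = \lambda$. Following the remark preceding the corollary, I would set $a_\lambda = \Ad(g)|_{H_1}$, a continuous automorphism of the open normal unimodular subgroup $H_1$. The key point is the module condition $\mu_{H_1}\circ a_\lambda^{-1} = \lambda\,\mu_{H_1}$ demanded in Example \ref{Sutherlandex}. Taking $\mu_{H_1} = \mu_H|_{H_1}$ (legitimate since $H_1$ is open) and extending test functions by zero, one computes $\int_{H_1} f(ghg^{-1})\,d\mu_{H_1}(h) = \Delta_H(g)\int_{H_1} f\,d\mu_{H_1}$ from the modular-function identity $\int_H f(hg^{-1})\,d\mu_H = \Delta_H(g)\int_H f\,d\mu_H$ together with normality of $H_1$; that is, the module of $\Ad(g)|_{H_1}$ equals $\Delta_H(g) = \lambda$. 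Since $L(H_1)$ is a factor by hypothesis and $H_1$ is unimodular, $(H_1, a_\lambda)$ then satisfies the standing assumptions of Example \ref{Sutherlandex}, and the construction there produces $G \cong (H_1 \times \hat K)\rtimes_\alpha \Z \cong (H_1\times\hat K)\rtimes \Delta_H(H)$ with $L(G)$ of type $\mathrm{III}_0$ via Proposition \ref{suffcondiforIII0}.

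For the dense case, I would apply Example \ref{Sutherlandextension} verbatim with $\Gamma = \Delta_H(H)$: its hypotheses are precisely that $H$ is second countable almost unimodular with $L(H_1)$ a factor and $\overline{\Delta_H(H)} = \R_{\ge 0}$, all of which hold by assumption. That example builds $G = (H_1\times\hat K)\rtimes_{\beta^\Gamma,v^\Gamma}\Gamma$, shows $\overline s_\gamma$ is ergodic for every $\gamma \ne 1$ (so $L(G)$ is a non-semifinite factor), and computes $\mathrm S(L(G))\setminus\{0\} = \R_+$ through the Bernoulli-shift mixing argument; hence $L(G)$ is of type $\mathrm{III}_1$. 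Collecting the two cases yields the stated dichotomy.

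The argument is essentially bookkeeping on top of the two examples, so I do not expect a serious obstacle; the one place requiring care is the module computation in the discrete case, i.e. identifying $\mathrm{mod}(\Ad(g)|_{H_1})$ with $\Delta_H(g)$ so that $a_\lambda$ carries the exact scaling factor $\lambda$ required by Example \ref{Sutherlandex}. A secondary point worth stating explicitly is that both constructions realize $G$ as an almost unimodular group with $\Delta_G(G) = \Delta_H(H)$ and $G_1 \cong H_1\times\hat K$, so that the twisted crossed product decomposition of Proposition \ref{cocycleresol} (with an inverse preserving section) and hence Proposition \ref{Sinvformula} genuinely apply.
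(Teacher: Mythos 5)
Your proposal is correct and follows exactly the route the paper itself takes: the corollary is proved by the dichotomy (relatively discrete versus dense) for the subgroup $\Delta_H(H)\subseteq\R_+$, with the relatively discrete case handled by the remark preceding Example \ref{Sutherlandextension} together with Example \ref{Sutherlandex} and Proposition \ref{suffcondiforIII0}, and the dense case by Example \ref{Sutherlandextension} verbatim. Your explicit verification that $\mu_{H_1}\circ(\Ad(g)|_{H_1})^{-1}=\Delta_H(g)\,\mu_{H_1}$ is a correct and welcome detail that the paper leaves implicit in its remark (it is consistent with the identity $d\mu_{H_1}(\alpha^{\Gamma}_{\delta^{-1}}(h))=\delta\cdot d\mu_{H_1}(h)$ used in Example \ref{Sutherlandextension}).
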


\section*{Acknowledgments}
I would like to express my deepest gratitude to Associate Professor Hiroshi Ando for his invaluable guidance, unwavering support, and insightful discussions throughout the course of this research. I sincerely thank Professor Reiji Tomatsu for helpful discussions about 2-cocycles and thoughtful feedback, especially for pointing us to the literature regarding Theorem \ref{plweightmainthm} and Theorem \ref{twistConnesfactority}. I would also like to thank Professor Brent Nelson and Aldo Garcia Guinto at  Michigan State University for their valuable comments on the first version of the preprint. This work was supported by JST SPRING, Grant Number JPMJSP2109.  
\bibliographystyle{siam}
\bibliography{references}

\end{document}